\let\div\relax
\DeclareMathOperator{\div}{div}
\DeclareMathOperator{\Id}{Id}                                       
\DeclareMathOperator{\tr}{tr}                                       
\DeclareMathOperator{\dist}{dist}                                   
\DeclareMathOperator{\spt}{spt}                                     
\renewcommand{\d}{\mathrm{d}}
\newcommand{\N}{\mathbb{N}}       
\newcommand{\R}{\mathbb{R}}
\renewcommand{\S}{\mathbb{S}}
\newcommand{\n}{\mathbf{n}}       
\newcommand{\m}{\mathbf{m}}
\newcommand{\p}{\mathbf{p}}
\newcommand{\x}{\mathbf{x}}
\renewcommand{\v}{\mathbf{v}}
\renewcommand{\u}{\mathbf{u}}
\newcommand{\A}{\mathbf{A}}
\newcommand{\V}{\mathbf{V}}
\newcommand{\Q}{\mathbf{Q}}
\renewcommand{\P}{\mathbf{P}}
\newcommand{\NN}{\mathscr{Q}_{\mathrm{max}}}     
\renewcommand{\H}{\mathscr{H}}
\newcommand{\abs}[1]{\left|#1\right|}
\newcommand{\mint} {\mathop{\int\hskip -1,05em -\,}}
\newcommand{\mI}[1]{\mint\nolimits_{\!\!\!\!#1}}
\begin{document}

\title{Minimizers of a Landau-de Gennes Energy with a Subquadratic Elastic Energy
}


\author{Giacomo Canevari \and Apala Majumdar \and
        Bianca Stroffolini
}

\authorrunning{G. Canevari, A. Majumdar, \& B.  Stroffolini} 

\institute{G. Canevari \at
           Basque Center for Applied Mathematics, 
           Alameda de Mazarredo 14,
           48009 Bilbao (Spain).
           \email{gcanevari@bcamath.org}           
           \and
           A. Majumdar \at
           Mathematical Sciences, University of Bath, 
           Claverton Down, Bath, BA2 7A9 (United Kingdom).
           \email{a.majumdar@bath.ac.uk}
           \and
           B. Stroffolini \at
           Dipartimento di Matematica e Applicazioni,
           via Cintia,
           Universit\`a Federico II,
  80126, Napoli (Italy).
           \email{bstroffo@unina.it}}

\date{Received: date / Accepted: date}

\maketitle

\begin{center}
 \textit{\large{Dedicated to John M. Ball in the occasion of his 70th birthday.}}
\end{center}

\bigskip

\begin{abstract}
 We study a modified Landau-de Gennes model for nematic liquid crystals, where the elastic term
 is assumed to be of subquadratic growth in the gradient. We analyze the behaviour of global minimizers 
 in two- and three-dimensional domains, subject to uniaxial boundary conditions, 
 in the asymptotic regime where the length scale of the defect cores is small 
 compared to the length scale of the domain. We obtain uniform convergence of the minimizers
 and of their gradients, away from the singularities of the limiting uniaxial map.
 We also demonstrate the presence of maximally biaxial cores in minimizers on two-dimensional domains, 
 when the temperature is sufficiently low.
\end{abstract}

\section{Introduction}
\label{intro}

Liquid crystals (LCs) are classical examples of mesophases that combine the fluidity of liquids with the orientational and positional order of solids \cite{deGennes}. Nematic liquid crystals (NLCs) are the simplest type of LCs for which the constituent asymmetric molecules have no translational order but exhibit a degree of long-range orientational order i.e. certain distinguished directions of averaged molecular alignment in space and time. The mathematics of NLCs is very rich and there are at least three continuum theories for NLCs in the literature --- the Oseen-Frank, the Ericksen and the Landau-de Gennes theories. These theories typically have two key ingredients --- the concept of a macroscopic order parameter and a free energy whose minimizers model the physically observable stable nematic equilibria. The Oseen-Frank theory is the simplest continuum theory restricted to purely uniaxial nematics with a single preferred direction of molecular alignment and a constant degree of orientational order. The Oseen-Frank order parameter is just a unit-vector field that models this single special direction, with two degrees of freedom, referred to as the director field. The Oseen-Frank energy density is a quadratic function of the director and its spatial derivatives; in the so-called one-constant approximation, the Oseen-Frank energy density reduces to the Dirichlet energy density. The Oseen-Frank theory has been remarkably successful but is limited to purely uniaxial materials and can only describe low-dimensional defects. For example, minimizers of the Dirichlet energy density can only support isolated point defects and these point defects have the celebrated radial-hedgehog profile with the molecules pointing radially outwards everywhere from the point defect \cite{BrezisCoronLieb}. Minimizers of the Oseen-Frank free energy with multiple elastic constants (subject to certain constraints) have a defect set of Hausdorff dimension less than one \cite{HKL}. However, confined NLC systems frequently exhibit line defects and even surface defects or wall defects.

The Ericksen theory is also restricted to uniaxial nematics but can account for a variable degree of orientational order, labelled by an order parameter which vanishes at defect locations. This order parameter regularises higher-dimensional defects. The Landau-de Gennes (LdG) theory is the most powerful continuum theory for nematic liquid crystals and the LdG order parameter is the LdG $\mathbf{Q}$-tensor order parameter, which is mathematically speaking, a symmetric traceless $3\times 3$ matrix with five degrees of freedom. The LdG $\mathbf{Q}$-tensor can describe both uniaxial and biaxial nematic states, which have a primary and secondary direction of molecular alignment. The LdG free energy density usually comprises an elastic energy density (which is quadratic in the derivatives of the $\mathbf{Q}$-tensor) and a bulk potential which drives the isotropic-nematic phase transition induced by lowering the temperature and the exact relation between Oseen-Frank and LdG minimizers has received a lot of mathematical interest in recent years.

We do not give an exhaustive review here; one of the first rigorous results in this direction is an asymptotic result in the limit of vanishing elastic constant studied by Majumdar \& Zarnescu \cite{majumdarzarnescu} and subsequently refined by Nguyen \& Zarnescu \cite{NguyenZarnescu}. The authors study qualitative properties of LdG minimizers with a one-constant elastic energy density and show that the LdG minimizers for appropriately defined Dirichlet boundary-value problems on three-dimensional bounded simply-connected domains, converge strongly in $W^{1,2}$ to a limiting minimizing harmonic map, which is the minimizer of the one-constant Oseen-Frank energy. The limiting map has a discrete set of point defects and the LdG minimizers converge uniformly to the limiting map, everywhere away from the defects of the limiting map i.e. the limiting map is an excellent approximation of the LdG minimizers in this asymptotic limit, away from defects. 
In Contreras \& Lamy \cite{contreraslamy} and Henao, Majumdar \& Pisante~\cite{HenaoMajumdarPisante}, the authors study a different asymptotic limit, namely, the low temperature limit of minimizers of the LdG energy (with a one-constant elastic energy density) and prove that minimizers cannot have purely isotropic points with $\mathbf{Q}=0$ in this limit. Henao, Majumdar \& Pisante demonstrate the uniform convergence of LdG minimizers to a minimizing harmonic map, away from the singularities of the limiting map, in this asymptotic limit. Using topological arguments, Canevari \cite{Canevari2D} shows that the non-existence of isotropic points for suitably prescribed Dirichlet data implies the existence of points with maximal biaxiality and negative uniaxiality (uniaxial with negative order parameter) in global LdG minimizers in this limit. These results clearly illustrate two features: using the one-constant Dirichlet elastic energy densities, the Oseen-Frank minimizers provide excellent approximations to the LdG minimizers in certain asymptotic limits; the differences are primarily contained near the defect sets of the limiting maps and the defects of the limiting map and the LdG minimizers can have different structures. However, we would expect the LdG defects (for minimizers) to shrink to the defects of the limiting minimizing harmonic map in the limit of vanishing nematic correlation length. For example, it is well known from numerical simulations that LdG minimizers have biaxial tori as defect structures with a negatively ordered uniaxial defect loop and as the nematic correlation length shrinks (in the limit of vanishing elastic constant), the biaxial torus shrinks to the radial-hedgehog defect, which is the corresponding defect for the limiting Oseen-Frank minimizer. In this respect, defects of the limiting map do give some insight into the defects of the LdG minimizers and vice-versa. 

These continuum theories are variational theories with a quadratic elastic energy density or an energy density that is quadratic in the derivatives of the order parameter. However, there is little experimental evidence to support the quadratic behaviour in regions of large gradient i.e. near defects. Hence, it is reasonable to conjecture that the elastic energy density may be subquadratic near defects matched by a quadratic growth away from defects. For example, if the Oseen-Frank energy density was subquadratic for large values of the gradient, then line defects
would be captured by the Oseen-Frank theory. This would be a significant development since one of the most popular reasons for choosing the LdG theory over the Oseen-Frank theory are the limitations of the Oseen-Frank approach with respect to defects.

Building on this idea, we propose a variant of the LdG energy, with a modified elastic energy density and the LdG bulk potential, for Dirichlet boundary-value problems on three-dimensional domains. 
The modified elastic energy exhibits a subquadratic growth in $|\nabla \mathbf{Q}|^p$ with $1<p<2$,  for $|\nabla \mathbf{Q}|$ sufficiently large and interpolates to the usual Dirichlet energy density, $|\nabla \mathbf{Q}|^2$  for bounded values of the gradient.
This elastic energy density is necessarily not homogeneous, introducing various technical difficulties.
A suquadratic variant of the Oseen-Frank theory was proposed by Ball \& Bedford, \cite{BallBedford}.

We study minimizers of this modified LdG free energy, in the limit of vanishing elastic constant, by analogy with the work in Majumdar \& Zarnescu \cite{majumdarzarnescu}. The limiting map in our case, is a $\phi$-minimizing map with a defect set of zero $d-p$ Hausdorff measure, where $d=2$ or $d=3$ according to the dimension of the domain. The limiting map is $C^{1,\alpha}$ for $\alpha \in (0,1)$  away from the defect set and we prove that the modified LdG minimizers converge uniformly to the $\phi$-minimizing map away from the defect set of the $\phi$-minimizing map. The essential difference is that the $\phi$-minimizing map can support higher-dimensional defects, in contrast to the minimizing harmonic map which can only support point defects. As noted above, we would expect the LdG defects to converge to the defects of the $\phi$-minimizing map as the correlation length shrinks to zero and hence, a comprehensive study of the defects of the $\phi$-minimizing map can yield new possibilities for the modified LdG defects too.

The second part of our paper concerns a qualitative study of minimizers of the modified LdG free energy in the low-temperature limit, for two-dimensional domains. Our qualitative conclusions are the same as Contreras and Lamy \cite{contreraslamy}, who use the Dirichlet energy density i.e. the exclusion of purely isotropic points in global energy minimizers.

There are substantial technical differences between our work and previous work with the usual Dirichlet elastic energy density. The Euler-Lagrange equations in the modified case are only quasi-linear and not uniformly elliptic, we do not have exact monotonicity results for the normalized modified LdG energy on balls, we need different arguments for the regularity of the $\phi$-minimizing limiting map and in the low-temperature limit, we need more technical details since the limiting map is a $p$-minimizing harmonic map for $1<p<2$ as opposed to Contreras and Lamy who dealt with the $p=2$ case for the low-temperature limit of LdG minimizers on two dimensional domains.

Our strategy for regularity is the following: first we get Morrey and $C^{1.\alpha}$ estimates for the minimizers $\mathbf{Q}_L$ of the modified LdG functional, $I_{mod}$, which possibly 
depend on $L$. These estimates are needed in order to prove $L^{\infty}-L^1$ estimates for the gradients using the Bernstein-Uhlenbeck method of passing through an uniformly elliptic equation, see \cite{diestrover}.
The final goal of uniform estimates is reached by a combined use of monotonicity of the energy and a scaling procedure that does not affect the characteristics of $\phi$, see Proposition~\ref {prop:uniform1}.

From a physical standpoint, the overall story for a modified LdG elastic energy density that is subquadratic in $|\nabla \mathbf{Q}|$ in regions of large gradient, seems to be similar to the story for a Dirichlet elastic energy density with the difference being captured by the limiting $\phi$-minimizing map as compared to the limiting minimizing harmonic map. The $\phi$-minimizing map is expected to have a more complicated and higher-dimensional defect set and this will have consequences for the LdG minimizers too. However, it remains a difficult task to test these theoretical predictions for defect structures since the experimental resolution of defect structures or the determination of the elastic energy density near defects are open issues.

\section{Setting of the problem and statement of the main result}
\label{sec:1}

Let $\Omega\subset\R^d$ be a bounded, smooth domain of dimension~$d\in\{2, \, 3\}$.
Let $S_0$ denote the space of  symmetric, traceless $3\times 3$ matrices given by
\begin{equation}
\label{eq:1}
S_0 = \left\{ \mathbf{Q}\in M^{3\times 3}\colon Q_{ij} =Q_{ji}; \ Q_{ii} = 0 \right\}
\end{equation}
where $M^{3\times 3}$ is the set of $3\times 3$ matrices, 
$\mathbf{Q} = \left(Q_{ij} \right)$ and we have used Einstein summation convention. 
The matrix $\mathbf{Q}$ is the Landau-de Gennes tensor parameter. 
In particular, (i) $\mathbf{Q}$​ is biaxial if it has three distinct eigenvalues; 
(ii) uniaxial if it has two non-zero degenerate eigenvalues such that the eigenvector 
associated with the non-degenerate eigenvalue is the distinguished director 
and (iii) isotropic if $\mathbf{Q}=0$.
We study minimizers of the modified Landau-de Gennes energy functional
\begin{equation} \label{eq:2old}
 \begin{split}
  I_{mod}[\mathbf{Q}] &= \int_{\Omega} \phi\left(|\nabla \mathbf{Q}|\right) 
      + \frac{1}{L}f_B\left(\mathbf{Q}\right) dV \\
      &= \int_{\Omega} \psi\left(|\nabla \mathbf{Q} |^2 \right) 
      + \frac{1}{L}f_B\left(\mathbf{Q}\right) dV
 \end{split}
\end{equation}
where $\psi(t^2):=\phi(t)$.
We will assume the following on $\phi$:
 \begin{enumerate}[label=(H\textsubscript{\arabic*}), ref=H\textsubscript{\arabic*}]
  \item \label{hp:C2} \label{hp:first} $\phi\in C^1[0, \, \infty)\cap C^2(0, \, \infty)$.
  \item \label{hp:convex} $\phi(0) = \phi^\prime(0) = 0$ and~$\phi^{\prime\prime}(t)>0$ for any~$t>0$. 

  \item \label{hp:Delta2prime} There exists positive numbers $c_0 \leq c_1$ such that
  \[
   c_0\phi^\prime(t) \leq \phi^{\prime\prime}(t)t
   \leq c_1 \phi^\prime(t) \qquad \textrm{for any } t\geq 0.
  \]
  \item \label{hp:subquadratic} There exists a number~$p\in (1, \, 2)$ such that 
  $\sup_{t\geq 0}(\phi^{\prime}(t)t - p\phi(t)) <+\infty$. 
  
  \item \label{hp:holder}  \label{hp:last} $\phi^{\prime\prime}$ is H\"older continuous off the diagonal:
    \[ \left|\phi^{\prime\prime}(s+t)-\phi^{\prime\prime}(t)\right|\leq
    c\, \phi^{\prime\prime}(t)\, 
    \left(\frac{\left|s\right|}{t} \right)^\beta, \quad \beta>0 \]
  for all $t>0$ and $s \in\mathbb{R}$ with $|s| < t/2$.
 \end{enumerate}
 An example of admissible~$\phi$ is
 \begin{equation} \label{specialphi}
  \phi(t) = \frac{k^{1-p/2}}{p}\left(t^2 + k\right)^{p/2} - \frac{k}{p}
 \end{equation}
 where~$k>0$ is a fixed parameter (compare with~\cite{BallBedford}).
Notice that the assumptions \eqref{hp:C2}--\eqref{hp:Delta2prime}, \eqref{hp:holder} guarantee the excess decay estimate for local minimizers of functional of Uhlenbeck type with $\phi$-growth, see \cite{diestrover}.

Further, $f_B$ is the usual quartic thermotropic potential that dictates the
isotropic-nematic phase transition as a function of the temperature \cite{ejam2010,ballnotes}:
\begin{equation}
f_B\left(\mathbf{Q}\right): = -\frac{A}{2}\textrm{tr}\mathbf{Q}^2 - \frac{B}{3}\textrm{tr}\mathbf{Q}^3 + \frac{C}{4}\left(\textrm{tr}\mathbf{Q}^2\right)^2 +M\left(A, B, C \right)
\end{equation} where $\textrm{tr}\mathbf{Q}^n = \sum_{i=1}^{3} \lambda_i^n $ for $n\geq 1$, $\sum_{i=1}^{3}\lambda_i = 0$, $A$ is the re-scaled temperature and $B, C$ are positive material-dependent constants whilst $L>0$ in (\ref{eq:2old}) is a fixed material-dependent elastic constant. The bulk potential $f_B$ is bounded from below and we add the constant $M(A,B,C)$ to ensure that 
$\min_{S_0} f_B = 0$. We work with temperatures below the critical nematic supercooling temperature or roughly speaking, we work with low temperatures so that $A>0$ in this  paper and $f_B$ attains its minimum 
on the set of uniaxial $\mathbf{Q}$-tensors given below:

\begin{equation}
\label{eq:Qmin}
\NN = \left\{ \mathbf{Q}\in S_0\colon \mathbf{Q} = s_+\left(\mathbf{n}\otimes\mathbf{n} - \mathbf{I}/3 \right)\right\}
\end{equation}
with $s_+ = \frac{B + \sqrt{B^2 + 24 A C}}{4C}$ and $\n\in S^2$ an arbitrary unit-vector \cite{newtonmottram,ejam2010,ballnotes}.

We take our admissible space to be
\begin{equation}
\label{eq:3}
\mathcal{A} = \left\{ \mathbf{Q}\in W^{1,\phi}(\Omega; S_0)\colon f_B(\Q)\in L^1(\Omega), \ \Q = \Q_b ~\textrm{on }\partial \Omega\right\}
\end{equation}
where $W^{1,\phi}(\Omega; S_0)$ is the Orlicz-Sobolev space of $L^{\phi}$-integrable $\mathbf{Q}$-tensors with $\nabla \mathbf{Q}\in L^{\phi}(\Omega)$, see Section~\ref{sect:notation} . 
The Dirichlet boundary condition $\mathbf{Q}_b \in W^{1,\phi}(\Omega; \NN)$
by assumption, since this is a physically relevant choice that simplifies
the subsequent analysis. In other words, we assume that
\begin{equation}
\label{eq:Qb}
\Q_b = s_+( \n_b \otimes \n_b - I/3 )
\end{equation}
where $I$ is the $3\times 3$ identity matrix, $\n_b\colon\Omega\to S^2$ and $\n_b \otimes \n_b \in W^{1,\phi}(\Omega; M^{3\times 3})$.

\begin{remark} \label{remark:bd_datum}

We have assumed that the boundary condition~$\mathbf{Q}_b$ is actually 
 defined on the whole of the domain~$\Omega$, and belongs to the Sobolev-Orlicz space $W^{1,\phi}$.
 However, in practical applications the behaviour of $\mathbf{Q}$ may only be assigned
 on the boundary of~$\Omega$, and one might ask whether there exists a
 map~$\mathbf{Q}_b \in W^{1,\phi}(\Omega; \NN)$ that matches the prescribed behaviour
 at the boundary. A sufficient condition for the existence of such~$\Q_b$ is the following:
 let~$p\in (1, \, 2)$ be given by Assumption~\eqref{hp:subquadratic}, and let 
 $\mathbf{P}\in W^{1-1/p, p}(\partial\Omega; \NN)$ be given; then, there exists a map
 $\mathbf{Q}_b\in W^{1, p}(\Omega; \NN)$ such that $\Q_b = \mathbf{P}$ on~$\partial\Omega$,
 in the sense of traces \cite[Theorem~6.2]{HardtLin}. The assumption~\eqref{hp:subquadratic}
 implies that $\phi(t)\lesssim t^p + 1$ and hence,
 we also have $\mathbf{Q}_b\in W^{1,\phi}(\Omega; \NN)$. Such an 
 extension~$\Q_b\in W^{1,p}(\Omega; \NN)$ might \emph{not} exist in case~$p = 2$,
 due to topological obstructions associated with the manifold~$\NN$ 
 (see e.g. \cite[Proposition~6]{Canevari3D}).
\end{remark}

In what follows, we re-scale the energy (\ref{eq:2old}); let $\bar{\mathbf{x}} = \frac{\mathbf{x}}{D}$ where $D$ is a characteristic length scale of the domain $\Omega$. It is a straightforward exercise to show that the re-scaled energy is
\begin{equation} \label{eq:2}
 \begin{split}
  \bar{I}_{mod}[\mathbf{Q}] &= \int_{\bar{\Omega}} D^3 \phi\left(\frac{|\bar{\nabla} \mathbf{Q}|}{D}\right) 
      + \frac{D^3}{L}f_B\left(\mathbf{Q}\right) \bar{dV} \\
      &= D \int_{\bar{\Omega}} \bar{\psi}\left(|\bar{\nabla} \mathbf{Q} |^2 \right) 
      + \frac{1}{\bar{L}}\bar{f_B}\left(\mathbf{Q}\right) \bar{dV}
 \end{split}
\end{equation}
where $\bar{\psi} = D^2\psi\left(\frac{|\bar{\nabla} \mathbf{Q} |^2}{D^2} \right) $, $\bar{f_B} = f_B*\frac{1}{A_0}$, $A_0>0$ is some characteristic value of the temperature variable $A$ and $\bar{L} = \frac{L}{ A_0 D^2}$. In what follows, we will work with the re-scaled energy (\ref{eq:2}) and drop the bars for brevity. In particular, we will study qualitative properties of minimizers of (\ref{eq:2}) in the limit $\bar{L}\to 0$ which is the macroscopic limit that describes $D^2 \gg \frac{L}{A_0} $, for a typical correlation length $\xi\propto \sqrt{\frac{L}{A_0}}$.
To this purpose, we define a $\phi$-minimizing uniaxial tensor-valued harmonic map,
by analogy with the ``minimizing harmonic map''
employed for the Dirichlet elastic energy density i.e. $|\nabla \mathbf{Q}|^2$ in \cite{majumdarzarnescu}.
All subsequent results and statements are to be interpreted in terms of the re-scaled energy (\ref{eq:2}).
\begin{definition} \label{def:harmonic}
A $\phi$-minimizing uniaxial harmonic map is a minimizer~$\Q_0\in W^{1,\phi}(\Omega;\NN)$ 
of the functional
\begin{equation*}
 \Q\mapsto \int_\Omega \phi(|\nabla\Q|) dV
\end{equation*}
among all maps~$\Q\in W^{1,\phi}(\Omega;\NN)$ such that $\Q = \Q_b$ on~$\partial\Omega$.
Equivalently, a $\phi$-minimizing uniaxial harmonic map is given by
\begin{equation} \label{eq:Q0}
\Q_0 = s_+ \left(\n_0 \otimes \n_0 - I/3 \right)
\end{equation}
for a unit-vector field $\n_0:\Omega \to S^2$ such that the symmetric matrix $\n_0\otimes \n_0$ 
is a global minimizer of the functional
\begin{equation}
\int_{\Omega} \psi(s_+^2|\nabla (\n_0 \otimes \n_0)|^2) dV = 
\min_{\n \otimes \n \in \mathcal{A}_\n} \int_{\Omega}\psi(s_+^2|\nabla(\n\otimes \n)|^2)~dV
\end{equation} 
in the admissible space
\[
\mathcal{A}_\n = \left\{\n: \Omega \to S^2\colon \n\otimes \n \in W^{1,\phi}\left(\Omega; M^{3\times 3}\right);~
\n\otimes\n = \n_b\otimes\n_b ~\textrm{on } \partial \Omega \right\}
\]
and $\Q_b$ and $\n_b$ are related as in (\ref{eq:Qb}).
\end{definition}

We can now state our main result.

\begin{theorem} \label{th:convergence}
 Suppose that the elastic energy density~$\phi$ satisfies the
 Assumptions~\eqref{hp:first}--\eqref{hp:last} above.
 Let~$\Q_L$ be a minimizer of the functional~\eqref{eq:2} 
 in the admissible class~$\mathcal{A}$ defined by~\eqref{eq:3}.
 Then, there exists a subsequence $L_k\to 0$ as~$k\to+\infty$
 and a $\phi$-minimizing uniaxial
 harmonic map~$\Q_0$ such that the following properties hold:
 \begin{enumerate}[label=(\roman*)]
  \item the set 
  \[
   S[\Q_0] := \left\{\x\in\Omega\colon 
   \liminf_{\rho\to 0} \rho^{p-d} \int_{B(\x, \, \rho)} \phi(|\nabla\Q_0|) >0 \right\} \!,
  \]
  where~$p\in (1, \, 2)$ is given by Assumption~\eqref{hp:subquadratic},
  is closed and there holds $\H^{d-p}(S[\Q_0]) = 0$;
  
  \item $\Q_0\in C^{1, \alpha}_{\mathrm{loc}}(\Omega\setminus S[\Q_0])$
  for some~$\alpha\in (0, \, 1)$;
  
  \item we have $\Q_{L_k}\to \Q_0$, $\nabla\Q_{L_k}\to\nabla\Q_0$
  locally uniformly on~$\Omega\setminus S[\Q_0]$ as~$k\to+\infty$.
 \end{enumerate}
\end{theorem}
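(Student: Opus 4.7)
\medskip

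\noindent\textbf{Proof proposal.}
The plan is to follow the standard compactness-plus-$\varepsilon$-regularity scheme for singularly perturbed variational problems with a manifold-valued limit, adapted to the $\phi$-growth setting. Testing the energy $I_{mod}[\Q_L]$ against the boundary datum $\Q_b\in W^{1,\phi}(\Omega;\NN)$ (for which $f_B(\Q_b)\equiv 0$) yields
\[
 \int_\Omega \phi(|\nabla\Q_L|) dV + \frac{1}{L}\int_\Omega f_B(\Q_L) dV \leq \int_\Omega \phi(|\nabla\Q_b|) dV,
\]
so the family $(\Q_L)$ is bounded in $W^{1,\phi}(\Omega;S_0)$ and $\int_\Omega f_B(\Q_L) dV = O(L)$. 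Extracting a subsequence $L_k\to 0$ with $\Q_{L_k}\rightharpoonup\Q_0$ weakly in $W^{1,\phi}$ and $\Q_{L_k}\to\Q_0$ a.e., Fatou together with $f_B\geq 0$ and $f_B^{-1}(0)=\NN$ forces $\Q_0(\x)\in\NN$ almost everywhere. Comparing $\Q_{L_k}$ with any competitor $\widetilde{\Q}\in W^{1,\phi}(\Omega;\NN)$ satisfying $\widetilde{\Q}=\Q_b$ on $\partial\Omega$ (so that $f_B(\widetilde{\Q})\equiv 0$) and passing to the lower limit using lower semicontinuity of $\Q\mapsto\int\phi(|\nabla\Q|) dV$ identifies $\Q_0$ as a $\phi$-minimizing uniaxial harmonic map in the sense of Definition~\ref{def:harmonic}.

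Parts~(i) and~(ii) then follow from the interior regularity theory for $\phi$-minimizers developed in \cite{diestrover}. An $\varepsilon$-regularity theorem provides a threshold $\varepsilon_0>0$ such that whenever $\rho^{p-d}\int_{B(\x,\rho)}\phi(|\nabla\Q_0|) dV<\varepsilon_0$, the map $\Q_0$ belongs to $C^{1,\alpha}(B(\x,\rho/2))$ with estimates depending only on $\varepsilon_0$ and the structural constants of $\phi$. The complement of $S[\Q_0]$ is exactly this ``good'' set, which is open; hence $S[\Q_0]$ is closed and~(ii) holds on $\Omega\setminus S[\Q_0]$. The bound $\H^{d-p}(S[\Q_0])=0$ is obtained by a standard Vitali-type $5r$-covering argument applied to balls on which the scaled $\phi$-energy exceeds $\varepsilon_0$, combined with the finite total energy of $\Q_0$.

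The crux is~(iii), for which the main ingredient is an $\varepsilon$-regularity statement \emph{uniform in $L$} for minimizers of $I_{mod}$: if $\rho^{p-d}\int_{B(\x,2\rho)}\phi(|\nabla\Q_L|) dV$ lies below a universal threshold and $\rho\leq\rho_0$ for some $\rho_0$ independent of $L$, then $\Q_L$ admits a $C^{1,\alpha}$ bound on $B(\x,\rho)$ with constants independent of $L$. This will be the main obstacle, since the Euler--Lagrange system for $I_{mod}$ is only quasi-linear with $\phi$-growth and hence not uniformly elliptic, the penalty $L^{-1}\nabla f_B(\Q_L)$ blows up as $L\to 0$, and no exact monotonicity formula is available for the normalized energy. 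Following the strategy flagged in the introduction, I would first derive Morrey and $C^{1,\alpha}$ estimates for $\Q_L$ (with constants possibly depending on $L$) so that the Euler--Lagrange equation holds pointwise; then apply the Bernstein--Uhlenbeck technique of \cite{diestrover} to the scalar equation for $|\nabla\Q_L|^2$ via an auxiliary uniformly elliptic equation, producing an $L^\infty$--$L^1$ gradient bound; and finally combine a quasi-monotonicity formula for the scaled energy with a scaling argument that preserves the structural constants of $\phi$ (the content anticipated from Proposition~\ref{prop:uniform1}) in order to eliminate the $L$-dependence.

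Once the uniform $\varepsilon$-regularity is in place, the convergence in~(iii) follows routinely. On any compact $K\subset\Omega\setminus S[\Q_0]$, by compactness and the definition of $S[\Q_0]$ there is a scale $\rho<\rho_0$ such that $(2\rho)^{p-d}\int_{B(\x,2\rho)}\phi(|\nabla\Q_0|) dV<\varepsilon_0/2$ for every $\x\in K$. Strong $L^\phi$-convergence $\Q_{L_k}\to\Q_0$ transfers this smallness to $\Q_{L_k}$ for $k$ large (the bulk contribution being $O(L_k)$ by the initial energy inequality), so the uniform $\varepsilon$-regularity furnishes a $C^{1,\alpha}(K)$-bound for $\Q_{L_k}$ independent of $k$. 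Arzel\`a--Ascoli together with uniqueness of the weak limit then upgrades the convergence to $C^1(K)$ for the full subsequence, which is the desired local uniform convergence of $\Q_{L_k}$ and $\nabla\Q_{L_k}$ away from $S[\Q_0]$.
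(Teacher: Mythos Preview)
Your overall strategy matches the paper's: compactness via the energy comparison with~$\Q_b$, identification of~$\Q_0$ as a $\phi$-minimizing harmonic map, then a uniform-in-$L$ $\varepsilon$-regularity result (the paper's Proposition~\ref{prop:uniform1} plus Lemma~\ref{lemma:uniformC1alpha}) to upgrade to locally uniform $C^{1,\alpha}$ convergence away from~$S[\Q_0]$. Two points in your write-up deserve correction, however.

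First, you invoke \cite{diestrover} for parts~(i) and~(ii), but that reference treats \emph{unconstrained} $\phi$-growth functionals, not minimizers into the manifold~$\NN$; an $\varepsilon$-regularity theorem for constrained $\phi$-minimizers is not supplied there. The paper avoids this issue entirely: it does not prove~(ii) directly for~$\Q_0$ but instead obtains the $C^{1,\alpha}$ regularity of~$\Q_0$ \emph{as a consequence} of the uniform $C^{1,\alpha}$ bounds on~$\Q_{L_k}$, via Ascoli--Arzel\`a. The closedness of~$S[\Q_0]$ is deduced from the almost-monotonicity formula (Lemma~\ref{lem:mon}), not from an $\varepsilon$-regularity statement for~$\Q_0$.

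Second, your step ``Strong $L^\phi$-convergence $\Q_{L_k}\to\Q_0$ transfers this smallness to $\Q_{L_k}$'' is not justified by what precedes it: you have only established weak $W^{1,\phi}$-convergence, which yields $\liminf$ inequalities in the wrong direction. The paper handles this carefully in Lemma~\ref{lem:3}, where it is shown that $\int_\omega\phi(|\nabla\Q_{L_k}|)\to\int_\omega\phi(|\nabla\Q_0|)$ on every smooth subdomain~$\omega\csubset\Omega$ (using global energy convergence plus lower semicontinuity on the complement) and that $L_k^{-1}\int_\omega f_B(\Q_{L_k})\to 0$. This localized energy convergence is precisely what is needed to feed the small-energy hypothesis of Proposition~\ref{prop:uniform1}; it does not follow from weak convergence alone.
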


\subsection{Notation, Orlicz spaces}
\label{sect:notation}

In what follows, we use the notations $f\sim g$ and~$f\lesssim g$
as short-hand for $c_0f\leq g\leq c_1 f$ and~$f \leq c_2 g$ respectively,
$c_0$, $c_1$, $c_2$ being some positive constants. 
We recall here some standard facts about N-functions 
(see e.g. \cite{RaoRen} for more details).

A real function $\phi\colon [0, \, +\infty) \to [0, \, +\infty)$
is said to be an N-function if $\phi(0)=0$, $\phi$ is differentiable, the derivative~$\phi^\prime$
is right continuous, non-decreasing and satisfies $\phi^\prime(0) = 0$
and $\phi^\prime(t) >0$ for $t > 0$. In particular, an N-function is convex. We say that 
$\phi$ satisfies the $\Delta_2$-condition if there exists $c > 0$ such that
$\phi(2t) \leq c\phi(t)$ for any~$t\geq 0$. We denote by $\Delta_2(\phi)$ the smallest constant~$c$
such that the previous inequality holds.
Given two N-functions $\phi_1$, $\phi_2$, we define $\Delta_2(\phi_1, \phi_2) := \max_{i=1,2}\Delta_2(\phi_i)$.
If $\phi$ is an N-function that satisfies the $\Delta_2$-condition, then
\begin{equation} \label{subadd}
 \phi(t+s) \leq c \, \phi\left(\frac{t+s}{2}\right) \leq
 \frac{c}{2}\left( \phi(t) + \phi(s) \right)
 \qquad \textrm{for all } s, \, t\geq 0.
\end{equation}

If~$\phi^\prime$ is strictly increasing, then we denote 
by $(\phi^\prime)^{-1} \colon [0, \, +\infty) \to [0, \, +\infty)$
the inverse function of~$\phi$, and we define
\begin{equation} \label{phi*}
 \phi^*(t) := \int_0^t (\phi^\prime)^{-1}(s) \, \d s \qquad \textrm{for any } t\geq 0.
\end{equation}
The function~$\phi^*$ is called the Young-Fenchel-Yosida dual function of~$\phi$. The functions~$\phi$
and~$\phi^*$ satisfy the so-called Young inequality, namely,
for any~$\epsilon>0$ there is $C_\epsilon>0$ such that
\begin{equation} \label{Young}
 st \leq \epsilon\phi(s) + C_\epsilon\phi^*(t) \qquad \textrm{for any } s, \, t\geq 0.
\end{equation}
If~$\epsilon = 1$, then we can take~$C_\epsilon=1$.

We can restate \eqref{hp:Delta2prime} in this way:
  \begin{align}
    \label{eq:phi_pp}
    \phi'(t) &\sim t\,\phi''(t)
  \end{align}
  uniformly in $t > 0$. The constants in~\eqref{eq:phi_pp} are called
  the {\em characteristics of~$\phi$}.
We remark that under these assumptions $\Delta_2({\phi,\phi^\ast})
< \infty$ will be automatically satisfied, where
$\Delta_2({\phi,\phi^*})$ depends only on the characteristics
of~$\phi$ and $\phi^\ast$.

Next, we define the Orlicz space $L^{\phi}(\Omega)$ as the space of measurable function $u$ such that 
$\int_{\Omega} \phi(|u(x)|) dx <\infty.$ The Orlicz space is a Banach space, also it is reflexive if the function $\phi$ verifies the $\Delta_2$ condition and its dual is the space $L^{\phi^*}$.
The Orlicz-Sobolev space $W^{1, \phi}(\Omega)$ is defined accordingly by requiring that both $u$ and the distributional gradient $\nabla u$ belong to $L^{\phi}$.

For a given N-function $\phi$, we define the N-function $\omega$ by
\begin{align}
  \label{eq:def_psi}
  \omega'(t) &:= \sqrt{ \phi'(t)\,t\,}.
\end{align}
We remark that if $\phi$ satisfies the condition~\eqref{eq:phi_pp}, 
then also $\phi^*$, $\omega$, and $\omega^*$ satisfy this condition.

Define $\A,\V\colon\R^d\otimes S_0\to\R^d\otimes S_0$ in the following way:
\begin{subequations}
  \label{eq:defAV}
  \begin{align}
    \label{eq:defA}
    \A(\mathbf{D})&=\phi'(|\mathbf{D}|)\frac{\mathbf{D}}{|\mathbf{D}|},
    \\
    \label{eq:defV}
    \V(\mathbf{D})&=\omega'(|\mathbf{D}|)\frac{\mathbf{D}}{|\mathbf{D}|}.
  \end{align}
\end{subequations}
The function $\A$ represents the leading term of the $\phi$-Laplacian system, while the function $\V$ , called the ``excess'' function, is the nonlinear expression for the excess decay, see Theorem \ref{generalgrowth}.

Another important set of tools are the {\rm shifted N-functions}
$\{\phi_a \}_{a \ge 0}$. We define for $t\geq0$
\begin{align}
  \label{eq:phi_shifted}
  \phi_a(t):= \int _0^t \phi_a'(s)\, \d s\qquad\text{with }\quad
  \phi'_a(t):=\phi'(a+t)\frac {t}{a+t}.
\end{align}
Note that $\phi_a(t) \sim \phi'_a(t)\,t$. The families $\{\phi_a \}_{a \ge 0}$ and
$\{(\phi_a)^* \}_{a \ge 0}$ satisfy the $\Delta_2$-condition uniformly in $a \ge 0$. 
The connection between $\A$, $\V$ (see {\cite{DieningEttwein}})  is the following:
\begin{align}\label{eq:equivalence}
    \big({\A}(\mathbf{D}_1) - {\A}(\mathbf{D}_2)\big) \cdot \big(\mathbf{D}_1-\mathbf{D}_2 \big)
    &\sim |{ \V(\mathbf{D}_1) - \V(\mathbf{D}_2)}|^2 \sim
    \phi_{|{\mathbf{D}_1}|}(|{\mathbf{D}_1 - \mathbf{D}_2}|),
\end{align}
uniformly in $\mathbf{D}_1$, $\mathbf{D}_2$. Moreover,
\begin{align}
   \A(\mathbf{D}) \cdot \mathbf{D} \sim |{\V(\mathbf{D})}|^2 &\sim \phi(|{\mathbf{D}}|),
\end{align}
uniformly in $\mathbf{D}$.

\begin{lemma}\label{lem:Orliczproperties}
Suppose $\phi\colon[0, \, +\infty)\to[0, \, +\infty)$ is an $N$-function 
and that $\phi$, $\phi^*$ both satisfy the $\Delta_{2}$-condition. Then we have, 
uniformly in $\lambda\in [0,1]$ and $a\geq 0$, 
\begin{align*}
 \phi_{a}(\lambda a)\sim \lambda^{2}\phi(a) \qquad \textrm{and} \qquad
 \phi_{a}^{*}(\lambda\phi'(a))\sim\lambda^{2}\phi(a). 
\end{align*}
\end{lemma}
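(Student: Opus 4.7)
The plan is to reduce both equivalences to the basic pointwise bound $\phi_a(t) \sim t\phi'_a(t)$ (and its dual version), which follows from the uniform $\Delta_2$-condition on $\{\phi_a\}$ and $\{\phi_a^*\}$ guaranteed by the characteristics of $\phi$. The classical sandwich argument yields
\[
\phi_a(t) \leq t\,\phi'_a(t) \leq \phi_a(2t) \leq c\,\phi_a(t),
\]
the first inequality being monotonicity of $\phi'_a$, the second coming from $\phi_a(2t) \geq \int_t^{2t} \phi'_a(s)\,\d s \geq t\phi'_a(t)$, and the third from the $\Delta_2$-condition. Applied to $\phi_a^*$ this gives $\phi_a^*(s) \sim s\,(\phi_a^*)'(s) = s\,(\phi'_a)^{-1}(s)$. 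I would also record the consequence $\phi(\alpha t) \sim \phi(t)$ and $\phi'(\alpha t) \sim \phi'(t)$ whenever $\alpha \in [c,C]$ with $0 < c \leq C < \infty$, with constants depending only on $c$, $C$ and the characteristics of $\phi$; and $\phi(a) \sim a\phi'(a)$, which follows from the same argument applied to $\phi$.

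For the first equivalence, substitute $t = \lambda a$ in $\phi_a(t) \sim t\phi'_a(t)$ and use the definition of $\phi'_a$:
\[
\phi_a(\lambda a) \sim \lambda a \cdot \phi'_a(\lambda a) = \lambda a \cdot \phi'\bigl(a(1+\lambda)\bigr)\cdot \frac{\lambda}{1+\lambda}.
\]
Since $\lambda \in [0,1]$, we have $1+\lambda \in [1,2]$, so the ``near-homogeneity'' of $\phi'$ recalled above yields $\phi'(a(1+\lambda)) \sim \phi'(a)$, and $\lambda/(1+\lambda) \sim \lambda$ with absolute constants. Collecting terms gives $\phi_a(\lambda a) \sim \lambda^2 a\,\phi'(a) \sim \lambda^2 \phi(a)$.

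For the second equivalence, the dual bound gives
\[
\phi_a^*\bigl(\lambda \phi'(a)\bigr) \sim \lambda\phi'(a)\cdot (\phi'_a)^{-1}\bigl(\lambda\phi'(a)\bigr),
\]
so it remains to show $(\phi'_a)^{-1}(\lambda\phi'(a)) \sim \lambda a$. But the computation of the previous paragraph already yields
\[
\phi'_a(\lambda a) = \phi'\bigl(a(1+\lambda)\bigr) \cdot \frac{\lambda}{1+\lambda} \sim \lambda \phi'(a).
\]
Since $\phi'_a\colon [0,\infty) \to [0,\infty)$ is a monotone bijection, and since the uniform $\Delta_2$-condition on $\phi_a^*$ translates into the ``near-homogeneity'' $(\phi'_a)^{-1}(c\,s) \sim (\phi'_a)^{-1}(s)$ for any bounded $c$, inverting the last display gives $(\phi'_a)^{-1}(\lambda\phi'(a)) \sim \lambda a$. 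Putting everything together,
\[
\phi_a^*\bigl(\lambda\phi'(a)\bigr) \sim \lambda\phi'(a) \cdot \lambda a = \lambda^2 a\,\phi'(a) \sim \lambda^2 \phi(a).
\]

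The only delicate point is keeping track of the uniformity of all comparability constants in both $a \geq 0$ and $\lambda \in [0,1]$ simultaneously; this is not a genuine obstacle, because the $\Delta_2$-condition on $\{\phi_a\}_{a\geq 0}$ and $\{(\phi_a)^*\}_{a\geq 0}$ is uniform in $a$ (as stated above the lemma), so every $\sim$ above involves constants depending only on the characteristics of $\phi$. The degenerate cases $\lambda = 0$ and $a = 0$ (where $\phi_0 = \phi$) are trivial.
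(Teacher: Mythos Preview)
Your argument is correct and is precisely the ``straightforward computation, based on the definition of the shifted function'' that the paper alludes to without writing out; you have simply supplied the details the paper omits.
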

The proof of this lemma is a starightforward computation, based on the definition
of the shifted function~\eqref{eq:phi_shifted}.


In the paper \cite{diestrover}, the authors proven that the analogue
of Uhlenbeck result holds true for functionals with general growth.

\begin{theorem}[\cite{diestrover}] \label{generalgrowth}
Let  $\phi\in C^1([0, \infty))\cap C^2 ((0, \infty))$ be a convex function such that
 \begin{itemize}
   \item {G1.} {$\phi'(t)\sim t \phi''(t)$} uniformly in $t>0$
    \item G2.  $\phi''$ is H\"older continuous off the diagonal:
    {$$ \left|\phi''(s+t)-\phi''(t)\right|\leq c\, \phi''
(t)\, \bigg(
    \frac{\left|s\right|}{t} \bigg)^\beta \quad \beta>0 $$}
  for all $t>0$ and $s \in \mathbb{R}$ with $\left|s\right| < \frac{1}{2} t$.
    \end{itemize}
There exists a constant $c\geq 1$ and an exponent $\gamma \in (0,1)$ depending only on $n,N$ and the characteristics of $\phi$ such that the following statement holds true: whenever $h \in W^{1,\phi}_0(B_{R}(x_0),\mathbb{R}^{N})$ is a weak solution of the system
\begin{equation*}
 \div\left(\frac{\phi'(\abs{\nabla u})}{|\nabla u|} \, \nabla u\right) =  0
 \qquad \textrm{in } B_{R}(x_0) , 
\end{equation*}
then for every $0 < r < R$, we have
\begin{equation}\label{excess}
 \begin{split}
  \sup_{B_{R/2}(x_0)} \phi(\abs{\nabla h}) \leq c \mI{B_{R}(x_0)} \phi(\abs{\nabla h})\, \d x 
  \qquad \text{and} \qquad \\
  \Phi(h;x_0,r,(\nabla h)_{x_0,r}) \, \leq \, c \, \Big( \frac{r}{R} \Big)^{2 \gamma} \, 
  \Phi(h;x_0,R,(\nabla h)_{x_0,R}) ,
 \end{split}
\end{equation}
where $\Phi(h;x_0,r,(\nabla h)_{x_0,r})$ is defined through the function $\V$:
\begin{equation*}
\Phi(h;x_0,r,(\nabla h)_{x_0,r}):=\mI{B(x_0,r)} \abs{\V(\nabla h)-(\V(\nabla h))_{x_0,r}}^2 \, \d x
\end{equation*}
\end{theorem}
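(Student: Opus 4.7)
The plan is to follow Uhlenbeck's original strategy for the $p$-Laplacian, adapted to the $\phi$-growth setting by means of the shifted N-function apparatus $\{\phi_a\}_{a\geq 0}$ and the excess function $\V$. The overall argument splits into three blocks: a Caccioppoli-type estimate for $\V(\nabla h)$ giving locally $W^{1,2}$-control; a Moser-type iteration on $|\V(\nabla h)|^2$ yielding the $L^\infty$-bound in the first part of~\eqref{excess}; and an $\A$-harmonic approximation that transfers the optimal decay of the linearised system to the nonlinear excess $\Phi$.

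For the first block I would differentiate the system $\div \A(\nabla h)=0$ formally in a direction $e_\alpha$, test with $\zeta^2\,\partial_\alpha h$ for a standard cut-off $\zeta$, and combine the ellipticity bound $D\A(\mathbf{D})[\mathbf{F},\mathbf{F}]\sim \phi''(|\mathbf{D}|)|\mathbf{F}|^2$ with the pointwise equivalence $|D\V(\mathbf{D})\mathbf{F}|^2\sim \phi''(|\mathbf{D}|)|\mathbf{F}|^2$ that follows from~\eqref{eq:equivalence}. This yields the Caccioppoli estimate
\[
\int_{B_{R/2}(x_0)} |\nabla \V(\nabla h)|^2\,\d x
\;\lesssim\; \frac{1}{R^2}\int_{B_R(x_0)} |\V(\nabla h)|^2\,\d x
\;\sim\; \frac{1}{R^2}\int_{B_R(x_0)} \phi(|\nabla h|)\,\d x.
\]
The formal differentiation is justified by finite-difference quotients, which first give $\V(\nabla h)\in W^{1,2}_{\mathrm{loc}}$ and then the displayed inequality. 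Sobolev embedding produces a reverse H\"older inequality for $|\V(\nabla h)|^2$, and a Moser iteration on $|\V(\nabla h)|^2$ (viewed as a subsolution of an elliptic inequality, in the Bernstein--Uhlenbeck spirit recalled in the introduction) delivers the sup-estimate $\sup_{B_{R/2}}\phi(|\nabla h|)\lesssim \mI{B_R(x_0)}\phi(|\nabla h|)\,\d x$.

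For the excess decay I would use an $\A$-harmonic approximation tailored to the $\V$-excess. Fix $B_R(x_0)$, set $\mathbf{D}_R := (\nabla h)_{x_0, R}$, and let $\tilde h$ solve the constant-coefficient linear system $\div\bigl(D\A(\mathbf{D}_R)\nabla \tilde h\bigr)=0$ in $B_R(x_0)$ with $\tilde h = h$ on~$\partial B_R(x_0)$. Classical linear theory gives a Campanato decay of order $r/R$ for $\nabla\tilde h$. The remainder $w:=h-\tilde h$ is controlled through hypothesis G2: on the set $\{|\nabla h - \mathbf{D}_R|\leq |\mathbf{D}_R|/2\}$ the difference $\A(\nabla h)-\A(\mathbf{D}_R) - D\A(\mathbf{D}_R)(\nabla h - \mathbf{D}_R)$ is estimated pointwise by $\phi''(|\mathbf{D}_R|)|\nabla h - \mathbf{D}_R|^{1+\beta}|\mathbf{D}_R|^{-\beta}$, while on the complementary set one absorbs the error by the quadratic growth of the shifted function $\phi_{|\mathbf{D}_R|}$ through Lemma~\ref{lem:Orliczproperties}. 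Combining the linear decay of $\tilde h$ with the energy estimate for $w$, and iterating on dyadic balls, yields the desired excess decay $\Phi(h;x_0,r,(\nabla h)_{x_0,r})\leq c(r/R)^{2\gamma}\Phi(h;x_0,R,(\nabla h)_{x_0,R})$ for some $\gamma\in (0,1)$ depending only on $n$, $N$, $\beta$ and the characteristics of $\phi$.

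The hard part is the lack of homogeneity of $\phi$, which blocks any direct rescaling argument of the type available for the $p$-Laplacian. This is overcome by systematically phrasing every estimate in terms of the correct shifted function $\phi_{|\mathbf{D}_R|}$, whose $\Delta_2$-constants depend only on the characteristics of $\phi$. A related subtlety is that the classical $A$-harmonic approximation lemma does not apply verbatim; one needs its Orlicz-valued counterpart, which measures closeness through the quantity $|\V(\nabla h)-\V(\mathbf{D}_R)|^2\sim \phi_{|\mathbf{D}_R|}(|\nabla h - \mathbf{D}_R|)$ rather than in a single $L^p$-norm. The H\"older exponent $\beta$ from hypothesis G2 then propagates through this machinery into the final excess exponent $\gamma$.
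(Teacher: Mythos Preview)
The paper does not prove this theorem at all: it is quoted verbatim from~\cite{diestrover} and used as a black box in Sections~\ref{sect:splitting} and following. So there is no ``paper's own proof'' to compare against; your sketch is in fact an outline of the argument in the cited reference.

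That said, your outline matches the strategy of~\cite{diestrover} fairly closely: Caccioppoli for~$\V(\nabla h)$ via difference quotients, the Bernstein--Uhlenbeck subsolution argument for the $L^\infty$-$L^1$ bound, and the $\A$-harmonic approximation lemma in its Orlicz version for the excess decay. You have correctly identified the two genuine obstacles (loss of homogeneity, need for the shifted functions~$\phi_a$) and the role of hypothesis~G2 in controlling the linearisation error. One point worth flagging: in~\cite{diestrover} the excess decay is not obtained by a single comparison with a constant-coefficient system followed by iteration as you describe, but rather the $\A$-harmonic approximation lemma is invoked at each scale under a smallness assumption on the excess, and a separate non-degeneracy/degeneracy dichotomy (comparing $|\mathbf{D}_R|$ with the excess) is needed to close the iteration. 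Your sketch glosses over this dichotomy, which is where most of the technical work sits; without it the comparison with the linear system can fail when $|\mathbf{D}_R|$ is small relative to the oscillation of~$\nabla h$.
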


\section{Asymptotic analysis of minimizers}

\subsection{Preliminaries}

\begin{lemma} \label{lemma:existence}
For every $A>0$, the functional $I_{mod}$ defined in \eqref{eq:2} admits a global minimizer $\mathbf{Q}_A \in \mathcal{A}$.
\end{lemma}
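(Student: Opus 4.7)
The plan is to prove existence by the direct method of the calculus of variations, exploiting the good functional-analytic properties of the Orlicz--Sobolev space~$W^{1,\phi}$ that follow from Assumptions~\eqref{hp:C2}--\eqref{hp:subquadratic}. First I would observe that $\mathcal{A}$ is non-empty: by hypothesis $\Q_b\in W^{1,\phi}(\Omega;\NN)$, and since every element of~$\NN$ is a minimizer of~$f_B$ with $f_B|_{\NN}\equiv 0$, we have $f_B(\Q_b)\equiv 0\in L^1(\Omega)$, so $\Q_b\in\mathcal{A}$ and in particular $\inf_{\mathcal{A}} I_{mod}<+\infty$. Clearly $I_{mod}\geq 0$, so a minimizing sequence $\{\Q_n\}\subset\mathcal{A}$ exists.

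Next I would establish compactness. Assumption~\eqref{hp:Delta2prime} forces both $\phi$ and~$\phi^*$ to satisfy the $\Delta_2$-condition (as recalled in Section~\ref{sect:notation}), so $W^{1,\phi}(\Omega;S_0)$ is a separable reflexive Banach space and, being that $\Q_n-\Q_b\in W^{1,\phi}_0$, the Poincar\'e inequality in Orlicz--Sobolev spaces together with the uniform bound on $\int_\Omega\phi(|\nabla\Q_n|)$ provides a uniform $W^{1,\phi}$-bound on~$\Q_n$. Passing to a subsequence (not relabelled), $\Q_n\rightharpoonup\Q_A$ weakly in~$W^{1,\phi}(\Omega;S_0)$. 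The compact embedding $W^{1,\phi}(\Omega)\hookrightarrow\hookrightarrow L^{\phi}(\Omega)$ (which again uses the $\Delta_2$-condition) delivers, after a further subsequence, pointwise a.e.\ convergence $\Q_n\to\Q_A$ on~$\Omega$. Weak convergence in~$W^{1,\phi}$ is preserved by the trace operator, so $\Q_A=\Q_b$ on~$\partial\Omega$.

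Now I would pass to the limit in the two pieces of~$I_{mod}$ separately. Since the integrand $\mathbf{D}\mapsto\phi(|\mathbf{D}|)$ is convex and continuous in~$\mathbf{D}\in\R^d\otimes S_0$, the functional $\Q\mapsto\int_\Omega\phi(|\nabla\Q|)$ is sequentially weakly lower semicontinuous on~$W^{1,\phi}$ by the standard Ioffe--De Giorgi theorem. For the bulk term, $f_B$ is continuous and non-negative on~$S_0$, so Fatou's lemma applied to $f_B(\Q_n)\to f_B(\Q_A)$ pointwise a.e.\ yields
\begin{equation*}
 \int_\Omega f_B(\Q_A)\,dV \leq \liminf_{n\to+\infty}\int_\Omega f_B(\Q_n)\,dV.
\end{equation*}
Combining these two lower-semicontinuity statements gives $I_{mod}[\Q_A]\leq\liminf I_{mod}[\Q_n]=\inf_{\mathcal{A}}I_{mod}$, whence in particular $f_B(\Q_A)\in L^1(\Omega)$, so $\Q_A\in\mathcal{A}$ and $\Q_A$ is the desired minimizer.

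The only step requiring genuine care is the functional-analytic setup in the Orlicz--Sobolev framework: reflexivity of~$W^{1,\phi}$, compactness of the embedding into~$L^\phi$, and the Poincar\'e inequality used to bound the minimizing sequence. All of these rely on the $\Delta_2$-condition for~$\phi$ and~$\phi^*$, which is guaranteed by~\eqref{hp:Delta2prime}, so no topological or coercivity obstruction appears. The boundary-datum constraint $\Q=\Q_b$ is a closed linear condition and is preserved under weak $W^{1,\phi}$-convergence via the continuity of the trace, so no further argument is needed there.
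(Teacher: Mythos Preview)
Your argument is correct and follows the same direct-method strategy as the paper, which simply invokes non-emptiness of~$\mathcal{A}$, coercivity, convexity in~$\nabla\Q$, and weak closedness of~$\mathcal{A}$ without further detail. You have spelled out the functional-analytic ingredients (reflexivity of~$W^{1,\phi}$, Poincar\'e, compact embedding, Fatou for the bulk term) that the paper leaves implicit, but the route is the same.
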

\begin{proof}
The proof follows immediately from the direct methods in the calculus of variations. The admissible space $\mathcal{A}$ is non-empty since the Dirichlet boundary condition $\mathbf{Q}_b \in \mathcal{A}$ for each $A>0$. The energy density in (\ref{eq:2}) is bounded from below and the energy density is a convex function of $\nabla \mathbf{Q}$ for each $A>0$, hence the functional $I_{mod}$ in (\ref{eq:2}) is bounded from below, $W^{1,\phi}$-coercive and lower semi-continuous for each $A>0$ \cite{evans}. 
Furthermore, the set $\mathcal{A}$ is weakly closed. 
This is enough to guarantee the existence of a global energy minimizer $\mathbf{Q}_A \in \mathcal{A}$.
\end{proof}

The critical points of $I_{mod}$ in \eqref{eq:2} are $C^{1,\alpha}$-solutions (for some $0<\alpha<1$)
of the following system of Euler-Lagrange equations:
\begin{equation}
\label{eq:EL}
\frac{\partial}{\partial x_k}\left[ 2 \psi^\prime(|\nabla\Q|^2) Q_{ij,k}\right] = \frac{1}{L}\left( -A Q_{ij} - B\left( Q_{ip}Q_{pj} - |\mathbf{Q}|^2 \delta_{ij}/3 \right) + C Q_{pq}Q_{pq} Q_{ij} \right) \quad
\end{equation}
for $ i,j,k=1,2,3$, where $\psi^\prime = \frac{\d \psi}{\d t}$, $Q_{ij,k} = \frac{\partial Q_{ij}}{\partial x_k}$ and 
$x_k$ is the $k$-th component of the position vector $\x\in \Omega$.

\begin{equation}\label{eq:EL2}
\div\left(\psi^\prime(|\nabla\Q|) 2 \nabla\Q\right)=\frac{1}{L}\left( -A Q_{ij} - B\left( Q_{ip}Q_{pj} - |\mathbf{Q}|^2 \delta_{ij}/3 \right) + C Q_{pq}Q_{pq} Q_{ij} \right) 
\end{equation}
for $ i,j,k=1,2,3$.
The $C^{1,\alpha}$-regularity of the weak solutions of the system \eqref{eq:EL} was first proven in the $p$-Laplacian case for $p>2$ \cite{uhlenbeck}, for $1<p<2$ see \cite{acerbifusco}, for convex functions of general growth, see  \cite{marcellinipapi}, and   \cite{diestrover} where there is an excess decay. 
See Section~\ref{sect:splitting} below for problems with a right-hand side.
In \cite{changyouwang}, the author uses the $C^{1,\alpha}$-regularity of solutions of a $p$-Ginzburg Landau-type system to deduce qualitative properties of minimizers of a $p$-Ginzburg Landau functional.

Our first result is a maximum principle argument for all solutions of the system \eqref{eq:EL}.
\begin{lemma} \label{lemma:max_principle}
Let $\mathbf{Q}_c \in \mathcal{A}$ be a critical point of $I_{mod}$ in \eqref{eq:2}. Then
$$ |\mathbf{Q}_c|^2 \leq \frac{2}{3}s_+^2 $$
on $\bar{\Omega}$.
\end{lemma}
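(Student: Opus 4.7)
The plan is to derive a scalar divergence-form differential inequality for $|\mathbf{Q}_c|^2$ by contracting the Euler-Lagrange system~\eqref{eq:EL2} with $\mathbf{Q}_c$ itself, and then to extract the pointwise bound through a Stampacchia-type test-function argument with the truncation $\varphi_+ := \bigl(|\mathbf{Q}_c|^2 - \tfrac{2}{3}s_+^2\bigr)_+$. A preliminary observation is that the boundary datum~\eqref{eq:Qb} satisfies $|\mathbf{Q}_b|^2 = s_+^2\,|\mathbf{n}_b\otimes\mathbf{n}_b - I/3|^2 = \tfrac{2}{3}s_+^2$, so $\varphi_+$ vanishes on~$\partial\Omega$; combined with the $C^{1,\alpha}$ regularity and boundedness of $\mathbf{Q}_c$, this places $\varphi_+$ in $W^{1,\phi}_0(\Omega)$ and makes $\mathbf{Q}_c\,\varphi_+$ an admissible test vector in the weak form of~\eqref{eq:EL2}.

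Contracting~\eqref{eq:EL2} with $\mathbf{Q}_c$, using the product rule and the identity $\mathrm{tr}\,\mathbf{Q}_c\equiv 0$, formally yields
\begin{equation*}
\partial_k\!\left[\psi'(|\nabla\mathbf{Q}_c|^2)\,\partial_k|\mathbf{Q}_c|^2\right]
= 2\psi'(|\nabla\mathbf{Q}_c|^2)|\nabla\mathbf{Q}_c|^2 + \tfrac{1}{L}\bigl(-A|\mathbf{Q}_c|^2 - B\,\mathrm{tr}(\mathbf{Q}_c^3) + C|\mathbf{Q}_c|^4\bigr).
\end{equation*}
Invoking the sharp algebraic bound $\mathrm{tr}(\mathbf{Q}^3)\leq \tfrac{1}{\sqrt{6}}|\mathbf{Q}|^3$ valid for every traceless symmetric $3\times 3$ matrix and discarding the nonnegative gradient term on the right, one obtains
\begin{equation*}
\partial_k\!\left[\psi'(|\nabla\mathbf{Q}_c|^2)\,\partial_k|\mathbf{Q}_c|^2\right]
\geq \tfrac{1}{L}\,|\mathbf{Q}_c|^2\bigl(C|\mathbf{Q}_c|^2 - \tfrac{B}{\sqrt{6}}|\mathbf{Q}_c| - A\bigr).
\end{equation*}

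The algebraic core of the argument is now the observation that $t = \sqrt{2/3}\,s_+$ is precisely the positive root of $Ct^2 - \tfrac{B}{\sqrt{6}}t - A$, equivalently that $2Cs_+^2 - B s_+ - 3A = 0$; both identities follow at once from the defining formula $s_+ = \tfrac{B+\sqrt{B^2+24AC}}{4C}$. Consequently the right-hand side above is strictly positive on the open set $\{\varphi_+ > 0\}$.

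To conclude, one tests the inequality against $\varphi_+$ and integrates by parts. Using $\nabla\varphi_+ = \mathbf{1}_{\{\varphi_+>0\}}\nabla|\mathbf{Q}_c|^2$, this delivers
\begin{equation*}
-\int_{\{\varphi_+>0\}}\!\psi'(|\nabla\mathbf{Q}_c|^2)\,\bigl|\nabla|\mathbf{Q}_c|^2\bigr|^2\,\d x
\;\geq\; \tfrac{1}{L}\!\int_{\{\varphi_+>0\}}\!\!|\mathbf{Q}_c|^2\bigl(C|\mathbf{Q}_c|^2 - \tfrac{B}{\sqrt{6}}|\mathbf{Q}_c| - A\bigr)\varphi_+\,\d x.
\end{equation*}
Since $\psi'>0$ on $(0,\infty)$ by Assumption~\eqref{hp:convex}, the left-hand side is nonpositive whereas the right-hand side is strictly positive unless $|\{\varphi_+>0\}|=0$; this forces $\varphi_+\equiv 0$ and, by continuity, gives $|\mathbf{Q}_c|^2\leq \tfrac{2}{3}s_+^2$ on all of $\bar{\Omega}$. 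The main subtlety is a regularity one: since $\mathbf{Q}_c$ is only $C^{1,\alpha}$, the pointwise contraction must be read weakly, and the rigorous implementation simply uses $\mathbf{Q}_c\,\varphi_+$ directly as a test vector in the weak form of~\eqref{eq:EL2}, so that no pointwise second derivatives ever appear and no quasilinear maximum principle is required.
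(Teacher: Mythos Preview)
Your proof is correct and takes a genuinely different route from the paper. The paper argues pointwise at an interior maximum of $|\mathbf{Q}_c|^2$: expanding the divergence and contracting with $\mathbf{Q}_c$ yields, at that point, $\psi'(|\nabla\mathbf{Q}_c|^2)\,\Delta|\mathbf{Q}_c|^2 = 2\psi'(|\nabla\mathbf{Q}_c|^2)|\nabla\mathbf{Q}_c|^2 + g(\mathbf{Q}_c)$ with $g(\mathbf{Q}) = L^{-1}(-A|\mathbf{Q}|^2 - B\,\mathrm{tr}\,\mathbf{Q}^3 + C|\mathbf{Q}|^4)$; the left side is nonpositive at a maximum while the right side would be strictly positive if $|\mathbf{Q}_c|^2 > \tfrac{2}{3}s_+^2$ there, invoking the fact (cited from~\cite{ejam2010}) that $g(\mathbf{Q})>0$ on that region. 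You run instead a Stampacchia truncation, testing the weak Euler--Lagrange system against $\mathbf{Q}_c\varphi_+$ with $\varphi_+ = (|\mathbf{Q}_c|^2 - \tfrac{2}{3}s_+^2)_+$, and you supply the algebraic step directly via $\mathrm{tr}\,\mathbf{Q}^3 \leq |\mathbf{Q}|^3/\sqrt{6}$ together with the identification of $\sqrt{2/3}\,s_+$ as the positive root of $Ct^2 - \tfrac{B}{\sqrt{6}}t - A$. The algebraic core is identical; what your approach buys is that it stays entirely in the weak formulation and uses only the $C^{1,\alpha}$ regularity actually available, whereas the paper's argument writes $\Delta|\mathbf{Q}_c|^2$ at a point and so tacitly assumes second-order regularity that has not been established at that stage.
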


\begin{proof}
The Euler-Lagrange equations \eqref{eq:EL} can be written as
\begin{align*}
&2 \psi^\prime(|\nabla\Q|^2) Q_{ij,kk} + 4 Q_{ij,k} \psi^{\prime\prime}(|\nabla\Q|^2) Q_{pq,r} Q_{pq,rk} \\
&\qquad\qquad = \frac{1}{L}\left( - A Q_{ij} - B\left(Q_{ip} Q_{pj} - |\mathbf{Q}|^2 \delta_{ij}/3 \right) + C|\mathbf{Q}|^2 Q_{ij} \right).
\end{align*} 
We assume that $|\mathbf{Q}|^2$ attains a maximum at an interior point $\x_0 \in \Omega$. Note that 
if $|\mathbf{Q}|^2$ attains a maximum at an interior point $\x_0 \in \Omega$, then $Q_{ij} Q_{ij,k} = 0$ at $\x_0$.
We multiply both sides of the above equation with $Q_{ij}$ using Einstein summation convention, and obtain the following equality at the interior maximum point ${\x}_0 \in \Omega$:
\begin{equation}\label{eq:max}
\psi^\prime(|\nabla\Q|^2) \Delta(|\Q|^2) = 2 \psi^\prime(|\nabla\Q|^2) |\nabla \mathbf{Q}|^2 + g\left(\mathbf{Q}\right) ~ \textrm{at } \x_0 \in \Omega,
\end{equation}
where $g\left(\mathbf{Q}\right) = \frac{1}{L}\left( - A |\mathbf{Q}|^2 - B\textrm{tr}\mathbf{Q}^3 + C |\mathbf{Q}|^4 \right)$.
In \cite{ejam2010}, it is explicitly shown that $g\left( \mathbf{Q}\right) >0$ for
$|\mathbf{Q}|^2 > \frac{2}{3} s_+^2$.
Using the definition of $\psi(t)$ and an interior maximum point, we have that the left-hand side of (\ref{eq:max}) is non-positive whereas the right-hand side is strictly positive if $|\mathbf{Q}|\left( \x_0 \right)^2 > \frac{2}{3} s_+^2$.
Therefore, we must have
$$ |\mathbf{Q}|^2\left(\x_0 \right) \leq \frac{2}{3} s_+^2$$
at an interior maximum point $\x_0 \in \Omega$. Since the boundary datum~$\Q_b$ defined in~\eqref{eq:Qb} 
satisfies $|\Q_b|^2\leq \frac{2}{3} s_+^2$, the conclusion of the lemma follows. 
\end{proof}

Next, we consider the rescaled energy 
on balls $B(\x, r) \subset \Omega$ defined to be
\begin{equation} \label{eq:n1}
 \begin{split}
  \mathcal{F}_L\left(\mathbf{Q}, \x,r \right)
    &:= \frac{1}{r^{d-p}} \int_{B\left(\x, r \right)} e_L \left(\mathbf{Q}\right) dV \\
    &= \frac{1}{r^{d-p}}\int_{B(\x, r)} \phi(|\nabla \mathbf{Q}|) 
    + \frac{1}{L} f_B\left(\mathbf{Q}\right)~dV
 \end{split}
\end{equation} 
where the number~$p\in(1, \, 2)$, depending on~$\phi$, is given by Assumption~\eqref{hp:subquadratic}.
In~\cite{majumdarzarnescu}, the authors consider the usual Dirichlet energy density,
$\phi(|\nabla \mathbf{Q}|) = \frac{1}{2}|\nabla \mathbf{Q}|^2$, and show that the rescaled
energy is an increasing function of the ball radius $r$.  In our setting, 
we can show instead an ``almost-monotonicity'' formula, i.e. we show that the rescaled energy 
as defined above, is an increasing function of $r$ ``up to an error'' that we can bound.

\begin{lemma} \label{lem:mon}
Let $\mathbf{Q}_L\in \mathcal{A}$ be a minimizer of the functional~$I_{mod}$, 
and let
\begin{equation} \label{monotonicity_error}
 \xi(t) := \phi^\prime(t)t - p \phi(t) \qquad \textrm{for any } t\geq 0.
\end{equation}
Then, for any~$\x_0\in\Omega$ and any $0 < r \leq R$ such that 
$B(\x_0, R)\subset\Omega$, there holds
\begin{equation} \label{eq:G1}
 \begin{split}
  \mathcal{F}_L(\mathbf{Q}_L,\x_0, r) &\leq \mathcal{F}_L(\mathbf{Q}_L, \x_0, R)
  + \int_{r}^R\left(\rho^{p-d-1}\int_{B(\x_0, \rho)} \xi(|\nabla\Q|)~dV\right) \, \d\rho \\
  &\leq \mathcal{F}_L(\mathbf{Q}_L, \x_0, R) + M\left(R^p - r^p\right)
 \end{split}
\end{equation}
 for some constant~$M>0$ that only depends on~$d$,~$\phi$ and~$p$.
\end{lemma}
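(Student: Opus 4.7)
The approach is to derive an almost-monotonicity formula via an inner-variation argument that exploits the minimality of $\Q_L$. Fix $\x_0 \in \Omega$ and a smooth vector field $X$ compactly supported in $B(\x_0, R)$, set $\Phi_t(\y) := \y + tX(\y)$, and consider the family of competitors $\Q_L^t := \Q_L \circ \Phi_t$. Since $X$ vanishes near $\partial\Omega$, each $\Q_L^t$ belongs to the admissible class $\mathcal{A}$ for $|t|$ sufficiently small, and minimality gives $\frac{d}{dt}\big|_{t=0} I_{mod}[\Q_L^t] = 0$. A change of variables $\z = \Phi_t(\y)$, together with the expansions $D\Phi_t(\Phi_t^{-1}(\z)) = I + t\,DX(\z) + O(t^2)$ and $|\det D\Phi_t|^{-1} = 1 - t\,\div X + O(t^2)$, yields the Pohozaev-type identity
\begin{equation*}
 \int_{B(\x_0, R)} \left[\frac{\phi^\prime(|\nabla\Q_L|)}{|\nabla\Q_L|}\, Q_{ij,k}\,Q_{ij,l}\,X_{l,k} - \left(\phi(|\nabla\Q_L|) + \frac{f_B(\Q_L)}{L}\right)\div X \right] d\y = 0.
\end{equation*}

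Next I would test this identity against the radial field $X(\y) = \eta(|\y - \x_0|)(\y - \x_0)$, where $\eta$ is a smooth cut-off approaching $\mathbf{1}_{[0, r]}$. Then $\div X = d\,\eta + |\y - \x_0|\,\eta^\prime$, while $Q_{ij,k}\,Q_{ij,l}\,X_{l,k} = \eta\,|\nabla\Q_L|^2 + |\y - \x_0|\,\eta^\prime\,|\partial_r \Q_L|^2$, where $\partial_r \Q_L$ denotes the radial derivative. Substituting the decomposition $\phi^\prime(t)\,t = p\,\phi(t) + \xi(t)$ coming from the definition~\eqref{monotonicity_error} of $\xi$, and letting $\eta \to \mathbf{1}_{[0, r]}$ through the coarea formula (justified for a.e. $r \in (0, R)$ by the $C^{1,\alpha}$-regularity of $\Q_L$) leads to the identity
\begin{align*}
 \frac{d}{dr}\mathcal{F}_L(\Q_L, \x_0, r) = &\ r^{p-d}\!\int_{\partial B(\x_0, r)} \frac{\phi^\prime(|\nabla\Q_L|)}{|\nabla\Q_L|}\,|\partial_r \Q_L|^2\,dS \\
 &+ \frac{p}{L}\,r^{p-d-1}\!\int_{B(\x_0, r)} f_B(\Q_L)\,d\y - r^{p-d-1}\!\int_{B(\x_0, r)} \xi(|\nabla\Q_L|)\,d\y.
\end{align*}
The first two terms on the right are non-negative, hence $\frac{d}{dr}\mathcal{F}_L \geq -r^{p-d-1}\int_{B(\x_0, r)} \xi(|\nabla\Q_L|)\,d\y$; integrating from $r$ to $R$ gives the first inequality in~\eqref{eq:G1}.

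For the second inequality, assumption~\eqref{hp:subquadratic} furnishes $\bar{M} := \sup_{t\geq 0} \xi(t) < +\infty$, so $\int_{B(\x_0, \rho)} \xi(|\nabla\Q_L|)\,d\y \leq \bar{M}\,|B(0, 1)|\,\rho^d$ and
\begin{equation*}
 \int_r^R \rho^{p-d-1}\!\int_{B(\x_0, \rho)} \xi(|\nabla\Q_L|)\,d\y\,d\rho \leq \bar{M}\,|B(0, 1)|\!\int_r^R \rho^{p-1}\,d\rho = \frac{\bar{M}\,|B(0, 1)|}{p}\,(R^p - r^p),
\end{equation*}
yielding the claim with $M := \bar{M}\,|B(0, 1)|/p$. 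The main technical hurdle is to justify the inner-variation computation and the passage to the limit $\eta \to \mathbf{1}_{[0, r]}$ rigorously: since $\Q_L$ is only $C^{1,\alpha}$ and the factor $\phi^\prime(|\nabla\Q_L|)/|\nabla\Q_L|$ may be singular at points where $\nabla\Q_L$ degenerates, one must exploit~\eqref{hp:Delta2prime} to control the relevant integrands and proceed by smooth approximation, following the classical pattern for monotonicity formulas in the theory of $p$-harmonic maps.
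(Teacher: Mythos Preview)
Your proposal is correct and follows essentially the same route as the paper: an inner-variation argument yielding the Pohozaev-type identity, tested against a radial vector field $X(\y)=\eta(|\y-\x_0|)(\y-\x_0)$, passage to the limit $\eta\to\mathbf{1}_{[0,r]}$, and then integration in $r$ combined with the bound $\sup_t\xi(t)<+\infty$ from~\eqref{hp:subquadratic}. Your differential identity for $\mathcal{F}_L$ matches the paper's exactly, and your final caveat about justifying the limit is precisely the point the paper addresses by choosing inner variations (which avoid second derivatives of $\Q_L$) over the alternative of multiplying the Euler--Lagrange system by $x_k\partial_kQ_{ij}$.
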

Since~$\phi(t)\sim t^2$ for small~$t$, we have~$\phi(t) > 0$ 
for small~$t>0$, whence the extra term. However, that term is bounded 
and small if $R$ is small, so it is not a problem (see proof of Prop.~\ref{prop:uniform1}).
\begin{proof}
  The monotonicity formula has been shown in the quadratic case \cite[Lemma $2$]{majumdarzarnescu} 
 by multiplying both sides of the Euler-Lagrange equation by $x_k\partial_k Q_{ij}$ and integrating by parts.
 This argument could be adapted to our case but, in order to make the computation rigourous, 
 some control on the second derivatives of~$\Q_L$ is needed. In order to avoid this technicality, 
 we adopt here another approach. For simplicity, we assume, $\x_0 = 0$, we write~$\Q$ 
 instead of~$\Q_L$  and~$B_\rho$ instead of~$B(\mathbf{0}, \rho)$.
 
 We first note that 
 \begin{equation} \label{derivative_rescaleden}
  \frac{\d}{\d \rho} \left\{{\rho}^{p-d} \int_{B_{\rho}} e_L(\Q)\right\} 
  = (p-d) {\rho}^{p-d-1}\int_{B_{\rho}} e_L(\Q) + \int_{\partial B_{\rho}} e_L(\Q).
 \end{equation}
 Now, we consider the so-called ``inner variation'' of~$\Q$, that is, we take 
 a regular vector field $\mathbf{X}\in C^{\infty}_{\mathrm{c}}(\Omega, \mathbb{R}^d)$ and,
 for~$t\in\R$, we define the maps $\mathbf{X}^t := \Id+t\mathbf{X}$. For~$|t|$ small enough,
 $\mathbf{X}^t$ maps~$\Omega$ diffeomorphically into itself and so it makes sense to define 
 $\Q^t := \Q\circ\mathbf{X}^t$ (here the~$\circ$ denotes the composition of maps).
 We can now compute that
 \[
  |\nabla\Q^t|^2 = |\nabla\Q|^2\circ\mathbf{X}^t + 2t
  (\partial_k Q_{ij}\circ\mathbf{X}^t)(\partial_p Q_{ij}\circ\mathbf{X}^t)\partial_k X_p
 \]
 and hence,
 \[
  \begin{split}
   &\phi(|\nabla\Q^t|) = \phi(|\nabla\Q|)\circ\mathbf{X}^t  \\ 
   &\qquad + t\frac{\phi^\prime(|\nabla\Q|)\circ\mathbf{X}^t}{{|\nabla\Q|}\circ\mathbf{X}^t}
     (\partial_k Q_{ij}\circ\mathbf{X}^t)(\partial_p Q_{ij}\circ\mathbf{X}^t)\partial_k X_p
     + \mathrm{o}(t)
  \end{split}
 \]
 On the other hand, by the implicit function theorem we can express the inverse of
 $\mathbf{X}^t$ as~$(\mathbf{X}^t)^{-1} = \Id - t\mathbf{X} + \mathrm{o}(t)$.
 Therefore, by using the identity $\det(\Id + tA) = 1 + t \tr A + \mathrm{o}(t)$, we obtain
 \[
  \det\nabla((\mathbf{X}^t)^{-1}) = \det(\Id - t\nabla\mathbf{X}) + \mathrm{o}(t) 
  = 1 - t\div\mathbf{X} + \mathrm{o}(t)
 \]
 Using the previous information, we can evaluate the energy of~$\Q^t$ by making
 the change of variable $\mathbf{y} = \mathbf{X}^t(\x)$ in the expression for~$I_{mod}$:
 \[
  \begin{split}
   I_{mod}[\Q^t] &= \int_\Omega \left(\phi(|\nabla\Q|)+ t\frac{\phi^\prime(|\nabla\Q|)}{|\nabla\Q|}
     \partial_kQ_{ij}\, \partial_p Q_{ij} \, \partial_k X_p + \frac{1}{L}f_B(\Q)\right) \\
     &\qquad\qquad\qquad \cdot\left(1 - t\div\mathbf{X}\right) \d\mathbf{y} + \mathrm{o}(t) 
  \end{split}
 \]
 Now, we differentiate both sides with respect to~$t$ and evaluate for~$t=0$.
 Due to the minimality of~$\Q$, the left-hand side vanish, and hence, by rearranging, we obtain
 \begin{equation} \label{stressenergy}
  \int_\Omega \frac{\phi^\prime(|\nabla\Q|)}{|\nabla\Q|}
     \partial_kQ_{ij}\, \partial_p Q_{ij} \, \partial_k X_p~dV 
     = \int_\Omega e_L(\Q)(\div\mathbf{X})~dV
 \end{equation}
 In particular for a radial vector field $\mathbf{X}(\x)=\gamma(|\x|) \x$
 where $\gamma$ is a smooth, compactly supported scalar function, we compute
 \[
  \partial_k X_p=\gamma^\prime(|\x|)|\x|\nu_p\nu_k+\gamma(|\x|)\delta_{kp}, 
  \quad \div\mathbf{X} =\gamma^\prime(|\x|)|\x|+ d\gamma(|\x|)
 \]
 where~$\boldsymbol{\nu} :=\frac{\x}{|\x|}$. Thus, \eqref{stressenergy} becomes
 \begin{equation*} 
  \begin{split}
    &\int_{\Omega} \gamma^\prime(|\x|)|\x|
    \frac{\phi^\prime(|\nabla\Q|)}{|\nabla\Q|}\abs{\partial_{\boldsymbol{\nu}}\Q}^2 
       + \int_{\Omega}\gamma(|\x|) \phi^\prime(|\nabla\Q|)|\nabla\Q| \\
    &\qquad\qquad\qquad = \int_{\Omega} \gamma^\prime(|\x|)|\x| e_L(\Q)
       + d \int_{\Omega} \gamma(|\x|)e_L(\Q).
  \end{split}
 \end{equation*}
 Letting $\gamma\to\chi_{(0, \, \rho)}$, where~$\rho\in(r, \, R)$, for a.e.~$\rho$ we get
 \[
  \begin{split}
    & - \rho \int_{\partial B_{\rho}} \frac{\phi^\prime(|\nabla\Q|)}{|\nabla\Q|}|
         \partial_{\boldsymbol{\nu}}\Q|^2  +\int_{B_{\rho}} \phi^\prime(|\nabla\Q|)|\nabla\Q| \\
    &\qquad\qquad = - \rho \int_{\partial B_{\rho}}\left(\phi(|\nabla\Q|) +\frac{1}{L}f_B(\Q)\right)
    + d \int_{B_{\rho}}\left(\phi(|\nabla\Q|) +\frac{1}{L}f_B(\Q)\right)
  \end{split}
 \]
 Rearranging the terms, we obtain
 \[
  \begin{split}
   &\rho \int_{\partial B_{\rho}} e_L(\Q)
   + \int_{B_{\rho}}\left(\phi^\prime(|\nabla\Q|)|\nabla\Q| - p \phi(|\nabla\Q|)\right) \\
   &= \rho \int_{\partial B_{\rho}} \frac{\phi^\prime(|\nabla\Q|)}{|\nabla\Q|}
      |\partial_{\boldsymbol{\nu}}\Q|^2  
      + (d- p) \int_{B_{\rho}} e_L(\Q) + \frac{p}{L} \int_{B_{\rho}}f_B(\Q)
  \end{split}
 \]
 Using this equality and the formula for the derivative of the 
 rescaled energy~\eqref{derivative_rescaleden}, we get:
 \[
  \begin{split}
   \frac{\d}{\d\rho} \left\{\rho^{p-d} \int_{B_{\rho}} e_L(\Q)\right\}
   &+ \rho^{p-d-1}\int_{B_{\rho}}\left(\phi^\prime(|\nabla\Q|)|\nabla\Q|- p\phi(|\nabla\Q|)\right) \\
   &=\rho^{p-d} \int_{\partial B_{\rho}} \frac{\phi^\prime(|\nabla\Q|)}{|\nabla\Q|}
      |\partial_{\boldsymbol{\nu}}\Q|^2 + \frac{p\rho^{p-d-1}}{L}\int_{B_{\rho}} f_B(\Q)
  \end{split}
 \]
 Integrating the above formula with respect to~$\rho$, and keeping in mind the 
 definition of~$\xi$ \eqref{monotonicity_error}, we get:
 \begin{equation}\label{mono}
  \begin{split}
   &{R}^{p-d} \int_{B_{R}} e_L(\Q)-{\rho}^{p-d} \int_{B_r} e_L(\Q) +
    \int_r^R \left(\rho^{p-d-1}\int_{B_\rho}\xi(|\nabla\Q|)\right) \d\rho \\
   &\qquad\qquad \geq \int_{B_R\setminus B_r} |\mathbf{y}-\x|^{p-d} 
   \frac{\phi^\prime(|\nabla\Q|)}{|\nabla\Q|}\abs{\frac{\partial\Q}{\partial|\mathbf{y}-\x|}}^2,
  \end{split}
 \end{equation}
 so the first inequality in~\eqref{eq:G1} follows. To conclude the proof we observe that,
 by Assumption~\eqref{hp:subquadratic}, the function~$\xi$ 
 is bounded from above. In particular, we have $\int_{B_\rho}\xi(|\nabla\Q|)\leq C\rho^d$
 for some constant~$C$ that only depends on~$\phi$, $d$, so the second line of~\eqref{eq:G1}
 follows.
\end{proof}

\begin{lemma} \label{lem:3}
 Let $\Q_L$ be a global minimizer of $I_{mod}$ in the admissible space $\mathcal{A}$ defined in Equation~\eqref{eq:3}. 
 Then there exists a sequence $L_k \to 0$ such that $\Q_{L_k}\rightharpoonup\Q_0$ 
 weakly in $W^{1,\phi}\left(\Omega; S_0 \right)$, where
 $\Q_0$ is a $\phi$-minimizing uniaxial tensor-valued map as defined
 above (Definition~\ref{def:harmonic}). Moreover, for any smooth 
 subdomain~$\omega\subset\!\subset\Omega$ there holds
 \[
  \int_{\omega} \phi(|\nabla\Q_{L_k}|)\to \int_\omega\phi(|\nabla\Q_0|),
  \qquad \frac{1}{L_k} \int_{\omega} f_B(\Q_{L_k})\to 0.
 \]
\end{lemma}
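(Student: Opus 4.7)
My plan is to combine an energy bound coming from the boundary datum with weak compactness in the Orlicz-Sobolev space, then promote the weak limit to a $\phi$-minimizing uniaxial map by a direct comparison argument, and finally localize the energy convergences via lower semicontinuity played off between $\omega$ and $\Omega\setminus\omega$.

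First I would use $\Q_b$ itself as a competitor: since $\Q_b$ takes values in $\NN = \{f_B = 0\}$, we have $I_{mod}[\Q_b] = \int_\Omega \phi(|\nabla \Q_b|)\,\d V$, a finite constant independent of $L$. The minimality of $\Q_L$ then yields a uniform bound on $\int_\Omega \phi(|\nabla \Q_L|)$ and also $\int_\Omega f_B(\Q_L)\,\d V \leq L\cdot C \to 0$ as $L\to 0$. Together with the $L^\infty$-bound $|\Q_L|^2 \leq \tfrac{2}{3}s_+^2$ from Lemma~\ref{lemma:max_principle}, this shows $\{\Q_L\}$ is bounded in $W^{1,\phi}(\Omega;S_0)$. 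Reflexivity of $W^{1,\phi}$ (guaranteed by $\phi,\phi^*\in\Delta_2$) provides a subsequence $\Q_{L_k}\rightharpoonup \Q_0$ weakly, and the corresponding compact embedding delivers $\Q_{L_k}\to \Q_0$ strongly in $L^\phi$ and pointwise a.e. Fatou applied to $f_B(\Q_{L_k})\geq 0$, combined with the $L^1$-bound above, forces $f_B(\Q_0)=0$ a.e., hence $\Q_0(\x)\in\NN$ a.e.; continuity of the trace operator preserves $\Q_0 = \Q_b$ on $\partial\Omega$.

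To identify $\Q_0$ with a $\phi$-minimizing uniaxial map in the sense of Definition~\ref{def:harmonic}, take any competitor $\tilde\Q \in W^{1,\phi}(\Omega;\NN)$ with $\tilde\Q = \Q_b$ on $\partial\Omega$. Since $f_B$ vanishes on $\NN$, $I_{mod}[\tilde\Q] = \int_\Omega \phi(|\nabla \tilde\Q|)\,\d V$, and the minimality of $\Q_{L_k}$ together with weak lower semicontinuity of the convex Orlicz integrand gives
\begin{equation*}
 \int_\Omega \phi(|\nabla \Q_0|)\,\d V \leq \liminf_k \int_\Omega \phi(|\nabla \Q_{L_k}|)\,\d V \leq \liminf_k I_{mod}[\Q_{L_k}] \leq \int_\Omega \phi(|\nabla \tilde\Q|)\,\d V.
\end{equation*}
Specializing to $\tilde\Q = \Q_0$ pins down $\int_\Omega \phi(|\nabla \Q_{L_k}|) \to \int_\Omega \phi(|\nabla \Q_0|)$ and forces $\tfrac{1}{L_k}\int_\Omega f_B(\Q_{L_k})\,\d V \to 0$.

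For any $\omega\csubset\Omega$, the $f_B$ statement follows at once from nonnegativity, $\tfrac{1}{L_k}\int_\omega f_B(\Q_{L_k}) \leq \tfrac{1}{L_k}\int_\Omega f_B(\Q_{L_k}) \to 0$. For the elastic part, I would apply weak lower semicontinuity separately to $\omega$ and $\Omega\setminus\omega$ and use the already-established global convergence: since $\liminf_k a_k \geq \int_\omega\phi(|\nabla\Q_0|)$ and $\liminf_k b_k \geq \int_{\Omega\setminus\omega}\phi(|\nabla\Q_0|)$ while $a_k + b_k \to \int_\Omega \phi(|\nabla\Q_0|)$, a standard squeezing argument (using $\liminf a_k + \limsup b_k \leq \lim(a_k+b_k)$) forces both $\liminf$'s to be limits and equal to the corresponding integrals of $\phi(|\nabla\Q_0|)$. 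The main technical subtlety, in my view, is the step confirming that $\Q_0$ is genuinely $\NN$-valued so that it is an admissible competitor in Definition~\ref{def:harmonic}: this hinges on the interplay between Lemma~\ref{lemma:max_principle} (to control $|\Q_{L_k}|$ pointwise) and the $\Delta_2$-condition (to secure a.e.\ convergence via Rellich in the Orlicz setting); everything else then follows from the standard semicontinuity machinery for convex integrands.
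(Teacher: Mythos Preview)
Your proof is correct and follows essentially the same route as the paper's: a uniform energy bound from an $\NN$-valued competitor (you use $\Q_b$, the paper uses a $\phi$-minimizing map $\Q_*$), weak compactness in $W^{1,\phi}$, identification of the limit as $\NN$-valued via $\int f_B(\Q_{L_k})\to 0$, lower semicontinuity to establish minimality and global energy convergence, and finally the same splitting argument on $\omega$ versus $\Omega\setminus\omega$ (you phrase it as a direct squeeze, the paper as a contradiction). The only cosmetic difference is that the paper deduces $\Q_0\in\NN$ from pointwise a.e.\ convergence of $f_B(\Q_{L_k})$ directly, without invoking the maximum principle or the Rellich embedding you emphasize---but both justifications are valid.
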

\begin{proof}
The proof closely follows Lemma $3$ in \cite{majumdarzarnescu}.
Let $\Q_*$ be a $\phi$-minimizing uniaxial tensor-valued map, 
in the sense of Definition~\ref{def:harmonic}. (The existence of such a map
follows by routine arguments, based on the direct method of the calculus of variations).
We note that $\Q_*\in\mathcal{A}$ and since $\Q_*(\x)\in\NN$ for a.e.~$\x$
(recall $\NN$ is the set of minimizers of the bulk potential $f_B$ in \eqref{eq:2}), 
we have $f_B \left(\Q_*\right) = 0$ a.e. in~$\Omega$. 
We get the following chain of inequalities
\begin{equation}
\label{eq:s1}
\int_{\Omega} \phi(|\nabla \Q_L|)dV \leq 
\int_{\Omega} \left(\phi(|\nabla \mathbf{Q}_L|) + \frac{1}{L} f_B(\Q_L)\right)~dV 
\leq \int_{\Omega} \phi(|\nabla \Q_*|)~dV.
\end{equation}
This shows that the $L^\phi$-norms of $\nabla\Q_L$ are uniformly bounded in the parameter $L$ 
and hence, we can extract a weakly convergent subsequence $\Q_{L_k}$
such that $\Q_{L_k} \rightharpoonup \Q_0$ in $W^{1,\phi}$, for some $\Q_0\in \mathcal{A}$ as $L_k \to 0$.
From Equation \eqref{eq:s1}, we can easily see that $\int_{\Omega} f_B\left(\Q_{L_k} \right) \to 0$ as $L_k \to 0$, so that taking account of the fact that $f_B(\Q)\geq 0$ for all $\Q \in S_0$, we have that $f_B(\Q_{L_k})$ converges to zero pointwise almost everywhere in $\Omega$, up to extraction of a non-relabelled subsequence. The bulk potential $f_B(\Q)=0$ if and only if $\Q \in \NN$ (\cite{ejam2010,ballnotes}).
Hence, the weak limit $\Q_0$ is of the form
\begin{equation}
\label{eq:s3}
\mathbf{Q}_{0}(\x) = s_+\left( \n_{0}(\x)\otimes \n_{0}(\x) - I/3 \right);
\qquad \n_{0}(\x)\in S^2 ~\textrm{for $\x\in\Omega$}.
\end{equation}

Using the convexity of~$\phi$, which implies the weak lower
semicontinuity of the corresponding integral functional, from~\eqref{eq:s1} we get
\begin{equation}
\label{eq:s2bis}
\int_{\Omega} \phi(|\nabla \Q_{0}|)~dV \leq
\liminf_{k\to+\infty} \int_{\Omega} \phi(|\nabla \Q_{L_k}|)~dV \leq
\int_{\Omega} \phi(|\nabla \Q_{*}|)~dV
\end{equation}
and, because~$\Q_*$ is $\phi$-minimizing harmonic,
\begin{equation}
\label{eq:s2}
\int_{\Omega} \phi(|\nabla \Q_{*}|)~dV \leq
\int_{\Omega} \phi(|\nabla \Q_{0})|)~dV.
\end{equation}
These inequalities, together, imply that $\Q_0$ is $\phi$-minimizing harmonic and that
\begin{equation} \label{eq:s_phi}
\int_{\Omega} \phi(|\nabla \Q_{0}|)~dV =
\lim_{k\to+\infty}\int_{\Omega} \phi(|\nabla \Q_{L_k}|)~dV.
\end{equation}
Then, passing to the limit into~\eqref{eq:s1}, we 
deduce that $\frac{1}{L_k}\int_\Omega f_B(\Q_{L_k})\to 0$. Finally, if there 
existed a smooth subdomain~$\omega\subset\!\subset\Omega$ and a further subsequence
$L_{k_j}\to 0$ such that
\[
 \int_\omega \phi(|\nabla \Q_{0}|)~dV < \lim_{j\to+\infty} 
 \int_\omega \phi(|\nabla \Q_{L_{k_j}}|)~dV,
\]
then we would have
\[
 \begin{split}
  \int_{\Omega\setminus\omega} \phi(|\nabla \Q_{0}|)~dV &\stackrel{\eqref{eq:s_phi}}{=}
  \lim_{j\to+\infty} \int_{\Omega} \phi(|\nabla \Q_{L_{k_j}}|)~dV - \int_\omega \phi(|\nabla \Q_{0}|)~dV \\
  &> \lim_{j\to+\infty} \int_{\Omega\setminus\omega} \phi(|\nabla \Q_{L_{k_j}}|)~dV,
 \end{split}
\]
which contradicts the lower semi-continuity. Therefore, we must have 
\[
 \int_\omega \phi(|\nabla \Q_{0}|)~dV \geq \limsup_{k\to+\infty} 
 \int_\omega \phi(|\nabla \Q_{L_{k}}|)~dV
\]
for any smooth subdomain~$\omega\subset\!\subset\Omega$, whence the lemma follows.
\end{proof}

\subsection{Splitting type functionals}
\label{sect:splitting}

The aim of this section is to prove the following result:

\begin{proposition} \label{prop:C1,a}
 Let~$L>0$ be fixed, and let~$\bar{\Q}_L$ be a minimizer of the functional~$I_{LdG}$
 defined by~\eqref{eq:2}. Suppose that the assumptions~\eqref{hp:first}--\eqref{hp:last}
 are satisfied. Then, there exists~$\alpha\in (0, \, 1)$ (only depending on the characteristics of~$\phi$)
 such that $\bar{\Q}_L\in C^{1,\alpha}_{\mathrm{loc}}(\Omega)$.
\end{proposition}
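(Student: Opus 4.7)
The strategy is a $\phi$-harmonic approximation (splitting) argument, combined with the excess decay of Theorem~\ref{generalgrowth}, that will yield a Campanato-type decay for the $\V$-excess of $\bar{\Q}_L$. The key initial observation is that the Euler-Lagrange system~\eqref{eq:EL2} may be written as $\div\A(\nabla\bar{\Q}_L) = F(\bar{\Q}_L)$, where $F$ is the derivative of the bulk term, namely $F(\Q) = L^{-1}(A\Q + B(\Q^2 - |\Q|^2 I/3) - C|\Q|^2\Q)$. By the maximum principle of Lemma~\ref{lemma:max_principle}, $|\bar{\Q}_L|^2\leq \tfrac{2}{3}s_+^2$ on $\bar\Omega$, and consequently $\|F(\bar{\Q}_L)\|_{L^\infty(\Omega)} \leq M$ for some constant $M = M(L, A, B, C, s_+)$. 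Thus $\bar{\Q}_L$ weakly solves an Uhlenbeck-type quasilinear system with bounded inhomogeneity, which makes it amenable to a comparison with the homogeneous problem on small balls.

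For any ball $B_R = B(x_0, R) \subset\!\subset \Omega$, I would introduce the $\phi$-harmonic comparison $h\in \bar{\Q}_L + W^{1,\phi}_0(B_R; S_0)$, the unique weak solution of $\div\A(\nabla h) = 0$ in~$B_R$ with $h = \bar{\Q}_L$ on $\partial B_R$ (existence by the direct method applied to $v\mapsto\int_{B_R}\phi(|\nabla v|)$). Testing the weak formulations for $\bar{\Q}_L$ and $h$ with $\bar{\Q}_L - h \in W^{1,\phi}_0(B_R; S_0)$ and subtracting, the monotonicity identity~\eqref{eq:equivalence} yields
\begin{equation*}
\int_{B_R}|\V(\nabla\bar{\Q}_L) - \V(\nabla h)|^2 \lesssim \int_{B_R} F(\bar{\Q}_L)\cdot(\bar{\Q}_L - h).
\end{equation*}
Using $|F|\leq M$, the Poincar\'e inequality on the ball (which applies since $\bar{\Q}_L - h$ vanishes on~$\partial B_R$), Jensen's inequality for the convex~$\phi$, and the uniform energy bound coming from minimality—supplemented by Young's inequality~\eqref{Young} in the shifted pair $(\phi_{|\nabla h|}, \phi_{|\nabla h|}^\ast)$, to absorb a small multiple of the $\V$-difference on the left-hand side—one obtains a bound of the form
\begin{equation*}
\mI{B_R}|\V(\nabla\bar{\Q}_L) - \V(\nabla h)|^2 \leq c\, R^{\sigma}
\end{equation*}
for some $\sigma > 0$ depending only on $d$, $p$ and the characteristics of~$\phi$. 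Combining with Theorem~\ref{generalgrowth} applied to~$h$ and the triangle inequality for the $\V$-excess produces the hybrid decay
\begin{equation*}
\Phi(\bar{\Q}_L; x_0, r, (\nabla\bar{\Q}_L)_{x_0,r}) \leq c(r/R)^{2\gamma} \Phi(\bar{\Q}_L; x_0, R, (\nabla\bar{\Q}_L)_{x_0,R}) + c\, R^\sigma,
\end{equation*}
valid for $0 < r \leq R \leq R_0$. A standard Campanato iteration (choosing $r = \theta R$ with $\theta$ so small that $c\theta^{2\gamma}\leq 1/2$) then yields $\Phi(\bar{\Q}_L; x_0, r, \cdot) \leq c\, r^{2\alpha}$ for some $\alpha \in (0, \gamma)$, uniformly on compact subsets of~$\Omega$. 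The Campanato characterization of H\"older continuity applied to~$\V(\nabla\bar{\Q}_L)$, together with the bi-H\"older nature of the map $\mathbf{D}\mapsto\V(\mathbf{D})$ (a consequence of~\eqref{hp:first}--\eqref{hp:Delta2prime}), upgrades this to $\nabla\bar{\Q}_L \in C^{0,\alpha}_{\mathrm{loc}}(\Omega)$, that is, $\bar{\Q}_L \in C^{1,\alpha}_{\mathrm{loc}}(\Omega)$.

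The technical heart of the argument—and the step I expect to absorb the most care—is the error estimate producing a strictly positive exponent~$\sigma$ with a constant independent of~$R$. This requires juggling the Orlicz--Sobolev--Poincar\'e inequality on the ball, Lemma~\ref{lem:Orliczproperties} on shifted N-functions, and Young's inequality for the pair $(\phi_a, \phi_a^\ast)$; the subquadratic growth assumption~\eqref{hp:subquadratic}, giving $\phi(t)\lesssim 1 + t^p$, is essential in order to convert bare $L^1$ control of $\bar{\Q}_L - h$ (coming from $|F|\leq M$) into a bound compatible with the nonlinear excess on the left. The remaining ingredients—the maximum principle, the excess decay for the homogeneous $\phi$-Laplacian, and the Campanato iteration—are available from the machinery already assembled in the earlier sections of the paper.
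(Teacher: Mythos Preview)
Your overall architecture---compare $\bar{\Q}_L$ with its $\phi$-harmonic replacement $h$ on balls, invoke the excess decay of Theorem~\ref{generalgrowth} for~$h$, and iterate \`a la Campanato---is exactly the paper's strategy. The gap is in your comparison estimate. From testing the two systems against $\bar{\Q}_L-h$ and using $|F|\le M$, what one actually obtains (this is Proposition~\ref{lemma:frozen_excess}) is
\[
 \mI{B_R}|\V(\nabla\bar{\Q}_L)-\V(\nabla h)|^2
 \;\lesssim\; \epsilon\,\mI{B_R}\phi(|\nabla\bar{\Q}_L|) + C_\epsilon\,\phi^*(R),
\]
and the first term on the right is the full energy average, \emph{not} the $\V$-excess. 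It cannot be absorbed into the left-hand side, and it carries no a~priori decay in~$R$ (we only know $\int_\Omega\phi(|\nabla\bar{\Q}_L|)<\infty$, so the average may blow up like $R^{-d}$). Your shifted-Young absorption trick does not rescue this: after absorbing $\epsilon\,\phi_{|\nabla h|}(|\nabla\bar{\Q}_L-\nabla h|)\sim\epsilon|\V-\V|^2$, the residual is $\mI{B_R}(\phi_{|\nabla h|})^*(MR)$, which in the subquadratic regime behaves like $R^2\mI{B_R}|\nabla h|^{2-p}$; feeding in only the global energy bound yields an exponent $2-d(2-p)/p$, which is \emph{non-positive} when $d=3$ and $p\le 6/5$. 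So the claimed bound $\mI{B_R}|\V-\V|^2\le cR^\sigma$ with $\sigma>0$ does not follow from your listed ingredients in the full range $p\in(1,2)$, $d\in\{2,3\}$.

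The paper closes this gap by a two-step bootstrap that you have collapsed into one. First, the comparison estimate above (with the small-$\epsilon$ energy term) is combined with the $L^\infty$--$L^1$ bound for~$h$ from Theorem~\ref{generalgrowth} to produce a recursion for $h(\rho):=\int_{B_\rho}\phi(|\nabla\bar{\Q}_L|)$ of the form $h(\rho)\le C[(\rho/R)^d+\epsilon]h(R)+C_\epsilon R^d$; Giaquinta's iteration lemma then gives $h(\rho)\lesssim\rho^\sigma$ for every $\sigma<d$, hence $\bar{\Q}_L\in C^\alpha_{\mathrm{loc}}$. Only \emph{after} this does one get the clean error bound: by the convex-hull property of~$h$ and the newly established H\"older continuity of~$\bar{\Q}_L$, $\|\bar{\Q}_L-h\|_{L^\infty(B_\rho)}\le C\rho^\alpha$, whence $\mI{B_\rho}|\V(\nabla\bar{\Q}_L)-\V(\nabla h)|^2\lesssim\rho^\alpha$. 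Your excess iteration and the passage from $\V(\nabla\bar{\Q}_L)\in C^\beta$ to $\nabla\bar{\Q}_L\in C^{\mu\beta}$ via the bi-H\"older property of~$\V$ then go through as you describe.
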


Throughout this section, $L$ is fixed, so we will write~$\bar{\Q}$ instead of~$\bar{\Q}_L$.
We deduce the proposition from the regularity results in~\cite{diestrover} 
(see Theorem~\ref{generalgrowth} above). However, these results apply 
to a problem without right-hand side (i.e., the case~$f_B = 0$).
In order to reduce to this case, we consider a ball $B(\x_0, R)\subseteq\Omega$.
We compare $\bar{\Q}$ with the solution  $\P$ of the $\phi$-harmonic system with boundary data $\bar{\Q}$ on the boundary of the ball $B(\x_0, R)$:
\begin{equation} \label{frozen}
 \div\Big( \frac{\phi'(|\nabla \P|)}{|\nabla \P|} \nabla \P\Big)=0
\end{equation}
in the class $\bar{\Q}+W^{1,\phi}_0 (B(\x_0, R), S_0)$.
The existence and uniqueness of~$\P$ follows by the strict convexity of~$\phi$,
by a routine application of the direct method of the calculus of variations.
Recall that, by the maximum principle (Lemma~\ref{lemma:max_principle}),
$\bar{\Q}$ is bounded independently of~$L$.

\begin{proposition} \label{lemma:frozen_excess}
 Let $\P$ be a solution of the system~\eqref{frozen}
 in the space $\bar{\Q}+W^{1,\phi}_0 (B(\x_0, R), S_0)$.
 For every $\epsilon>0$ there exists $C_{\epsilon}>0$ such that:
 \begin{equation*}
 \mI{B(\x_0, R)}|\V(\nabla\bar{\Q})-\V(\nabla \P)|^2 \d x
 \le \frac{1}{L} \left[\epsilon \mI{B(\x_0, R)} \phi(|\nabla \bar{\Q}|)+C_{\epsilon}\phi^*(R)\right] \!,
 \end{equation*}
 where~$\V$ is defined by~\eqref{eq:defV} and~$\phi^*$  is defined by \eqref{phi*}.
\end{proposition}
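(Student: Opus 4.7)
The plan is to test the weak Euler--Lagrange equation~\eqref{eq:EL2} for $\bar{\Q}$ and the $\phi$-harmonic system~\eqref{frozen} for $\P$ against their difference $u := \bar{\Q} - \P$, which lies in $W^{1,\phi}_0(B(\x_0,R); S_0)$ by construction, and subtract. Writing the right-hand side of~\eqref{eq:EL2} as $\frac{1}{L}h(\bar{\Q})$, where $h$ denotes the (cubic) $\Q$-gradient of $f_B$, this produces
\begin{equation*}
 \int_{B(\x_0,R)} \bigl(\A(\nabla \bar{\Q}) - \A(\nabla \P)\bigr) \cdot \nabla u \, dx
 = -\frac{1}{L}\int_{B(\x_0,R)} h(\bar{\Q}) \cdot u \, dx.
\end{equation*}
The equivalence~\eqref{eq:equivalence} shows that the left-hand side is comparable to $\int_{B(\x_0,R)} |\V(\nabla \bar{\Q}) - \V(\nabla \P)|^2\, dx$, so the task reduces to bounding $\frac{1}{L}\mI{B(\x_0,R)} |h(\bar{\Q})|\,|u|\, dx$ from above.

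The maximum principle of Lemma~\ref{lemma:max_principle} gives $|\bar{\Q}| \le \sqrt{2/3}\,s_+$ uniformly in~$L$, hence $|h(\bar{\Q})| \le C$ pointwise with a constant $C$ depending only on the coefficients $A, B, C$ of the bulk potential. One is then reduced to controlling $\frac{1}{L}\mI{B(\x_0,R)} |u|\, dx$. I apply Young's inequality~\eqref{Young} in the specific pairing $s = |u|/R$, $t = R$, namely
\begin{equation*}
 |u| = (|u|/R)\cdot R \;\le\; \epsilon\, \phi(|u|/R) + C_\epsilon\, \phi^*(R),
\end{equation*}
the scaling being chosen precisely so as to reproduce the $\phi^*(R)$ error term appearing in the statement.

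It remains to convert $\phi(|u|/R)$ into a quantity involving $\nabla \bar{\Q}$. Because $u \in W^{1,\phi}_0(B(\x_0,R); S_0)$, the Orlicz--Poincar\'e inequality (available since $\phi$ and $\phi^*$ satisfy $\Delta_2$ by Assumption~\eqref{hp:Delta2prime}) gives $\mI{B(\x_0,R)} \phi(|u|/R) \lesssim \mI{B(\x_0,R)} \phi(|\nabla u|)$. The minimality of $\P$ for $\int \phi(|\nabla\,\cdot\,|)$ in the class $\bar{\Q} + W^{1,\phi}_0(B(\x_0,R); S_0)$ yields $\int_{B(\x_0,R)} \phi(|\nabla \P|) \le \int_{B(\x_0,R)} \phi(|\nabla \bar{\Q}|)$, and together with the subadditivity~\eqref{subadd} this produces $\mI{B(\x_0,R)} \phi(|\nabla u|) \lesssim \mI{B(\x_0,R)} \phi(|\nabla \bar{\Q}|)$. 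Chaining these estimates and relabelling the arbitrary constants $\epsilon, C_\epsilon$ so as to absorb the multiplicative constant $C$ from the bound on $h(\bar{\Q})$ delivers the claim. The main delicacy is the careful matching of $R$-powers between Young's inequality and the Orlicz--Poincar\'e estimate so that the error term is exactly~$\phi^*(R)$; the maximum principle is essential, as it is what keeps $|h(\bar{\Q})|$ bounded independently of the small parameter~$L$.
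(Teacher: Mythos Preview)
Your proof is correct and follows essentially the same route as the paper: test the difference of the two systems against $\bar{\Q}-\P$, use the monotonicity relation~\eqref{eq:equivalence} on the left, the maximum principle to bound the bulk term on the right, then Young's inequality with the splitting $|u|=R\cdot(|u|/R)$, the Orlicz--Poincar\'e inequality, subadditivity, and the minimality of $\P$ to absorb $\phi(|\nabla\P|)$ into $\phi(|\nabla\bar{\Q}|)$. The paper also remarks that the Lagrange-multiplier term $\frac{B}{3L}|\bar{\Q}|^2\mathbf{I}$ in the Euler--Lagrange system vanishes when paired with the traceless test function $\bar{\Q}-\P$, which is implicit in your treatment of $h(\bar{\Q})$.
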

\begin{proof}
 Let us set $\A(\nabla\Q):=\frac{\phi^\prime(|\nabla \Q|)}{|\nabla\Q|}$. 
 We observe that $\bar{\Q}$ is a solution of the Euler-Lagrange system:
 \begin{equation*}
  -\div \Big( \frac{\phi'(|\nabla \bar{\Q}|)}{|\nabla \bar{\Q}|} \nabla \bar{\Q}\Big)
  = -\frac1L \nabla_{\Q}f_B(\bar{\Q}) + \frac{B}{3L}|\bar{\Q}|^2 \mathbf{I}
 \end{equation*}
 We consider the difference between the above system and~\eqref{frozen},
 testing with $\bar{\Q}-\P$.
 Using \eqref{eq:equivalence}, we can rewrite the right-hand side as follows:
 \begin{equation*}
  \mI{B_{R}}(\A( \nabla \bar{\Q})-\A(\nabla \P)) : (\nabla \bar{\Q}-\nabla \P) \d x\sim \mI{B_{R}} |\V(\nabla \bar{\Q})-\V(\nabla\P)|^2 \d x
 \end{equation*}
 The right-hand side can be estimated taking into account that 
 $\tr(\bar{\Q} - \P) = 0$, $f_B$ is 
 a polynomial in $\Q$ and $\bar{\Q}$ is bounded by the maximum principle 
 (Lemma~\ref{lemma:max_principle}); this yields
 \begin{equation} \label{frozen_excess}
  \begin{split}
   &\mI{B_{R}}|\V(\nabla \bar{\Q})-\V(\nabla\P)|^2 \d x \\
   &\qquad\qquad \lesssim
   \frac{1}{L} \abs{\mI{B_{R}} \nabla_{\Q}f_B(\bar{\Q}) : (\bar{\Q}-\P) \, \d x}
   \leq \frac{C}{L} \mI{B_{R}} |\bar{\Q} -  \P | \d x .
  \end{split}
 \end{equation}
 On the other hand, using Young \eqref{Young} and Poincar\'e inequalities 
 and \eqref{subadd}, we obtain
  \begin{align*}
   \frac{1}{L} \mI{B_{R}} |\bar{\Q} -  \P | \d x 
   &= \frac{C}{L} \mI{B_{R}} R \frac{|\bar{\Q} -  \P|}{R} \, \d x
   \le \frac{1}{L} \left(\epsilon \mI{B_R} 
   \phi\left(\frac{|\bar{\Q} - \P|}{R}\right)  + C_{\epsilon}\phi^*(R)\right) \\
   &\lesssim \frac{1}{L} \left(\epsilon \mI{B_R} 
   \phi(|\nabla\bar{\Q} - \nabla\P|)  + C_{\epsilon}\phi^*(R)\right) \\
   &\lesssim \frac{1}{L} \left(\epsilon \mI{B_{R}}
   \left(\phi(|\nabla \bar{\Q}|) + 
   \phi(|\nabla\P|) \right) + C_{\epsilon}\phi^*(R)\right)
 \end{align*}
 The minimality of $\P$ implies that
 the average of $\phi(|\nabla\P|)$ is bounded from above 
 by the average of $\phi(|\nabla \bar{\Q}|)$, and hence, the proposition follows.
\end{proof}

\begin{proof}[of Proposition~\ref{prop:C1,a}]
 We first show that~$\bar{\Q}\in C^\alpha_{\mathrm{loc}}(\Omega)$.
 Let $B(\x_0, R)\subseteq\Omega$ be a fixed ball, and let~$\P$
 be the solution of~\eqref{frozen} on~$B(\x_0, R)$ 
 such that $\P = \bar{\Q}$ on $\partial B(\x_0, R)$.
 Let~$B_{\rho}$ be a ball of radius~$\rho$ with~$B_\rho\subseteq B(\x_0, R)$.
 We apply the previous estimate (Proposition~\ref{lemma:frozen_excess}), so to deduce
 \begin{equation*}
  \begin{split}
   \mI{B_\rho} \phi(|\nabla \bar{\Q}|) \d x 
   & \lesssim \mI{B_{\rho}} |\V(\nabla\bar{\Q})-\V(\nabla\P)|^2 \d x 
      + \mI{B_{\rho}} |\V(\nabla\P)|^2 \d x \\
   & \lesssim  L^{-1} \left(\frac{R}{\rho}\right)^{d} \left[\epsilon \mI{B_{R}} 
      \phi(|\nabla \bar{\Q}|)+C_{\epsilon}\phi^*(R)\right]
      + \sup_{B_\rho} \phi(|\nabla\P|)
  \end{split}
 \end{equation*}
 Since $\P$ is $\phi$-harmonic, we can use the $L^{\infty}-L^1$ estimate proven in~\cite{diestrover} 
 (Theorem~\ref{generalgrowth}, Eq. \eqref{excess}):
 \begin{equation*}
  \begin{split}
   \mI{B_\rho} \phi(|\nabla \bar{\Q}|) \d x 
   \lesssim L^{-1} \left(\frac{R}{\rho}\right)^{d} \left[\epsilon \mI{B_{R}} 
      \phi(|\nabla \bar{\Q}|)+C_{\epsilon}\phi^*(R)\right]
      + \mI{B_{R}} \phi(|\nabla\P|) \d x \\
  \end{split}
 \end{equation*}
 The last term can be bounded from above by 
 $\mI{B_R} \phi(|\nabla\bar{\Q}|)\d x$ using the minimality of $\P$.
 The term $\phi^*(R)$ is also bounded from above, because~$R\leq\mathrm{diameter}(\Omega)<+\infty$.
 Let us consider the quantity $h(r) := \int_{B_r} \phi(|\nabla\bar{\Q}|)\d x$.
 We have proven that for any  $0 < \rho < R/2$  it holds:
 \[
  h(\rho) \leq C_1 \left[\left(\frac{\rho}{R}\right)^{d} + \epsilon L^{-1}\right] h(R) 
  + C_\epsilon {R}^{d} 
 \]
 for some positive constants~$C_1$ that only depends on~$\phi$, $d$.
 By modifying the value of~$C_1$, if necessary, we can make sure that the same
 inequality holds for any~$0 < \rho < R$. 
 We apply Giaquinta's Lemma \cite[Lemma~2.1, p.~86]{Giaquinta} and conclude that,
 if we choose~$\epsilon$ small enough, there holds 
 \[
  h(\rho) \leq c_\sigma \left[\left(\frac{\rho}{R}\right)^{\sigma} h(R) 
  + C_\epsilon R^{\sigma} \right]
 \]
 for all $0 < \rho < R$ and all~$0<\sigma<d$,
 where~$c_\sigma> 0$ depends only on~$\phi$, $d$, $\sigma$.
 Thanks to Morrey's characterisation of H\"older continuous functions, we conclude that
 $\bar{\Q}\in C^\alpha(B(\x_0, R/2))$ for any~$\alpha\in (0, \, 1]$.
 
 One we know that~$\bar{\Q}$ is locally H\"older continuous, we can prove
 that $\bar{\Q}\in C^{1,\alpha}_{\mathrm{loc}}(\Omega)$ by adapting the arguments 
 in \cite[Proposition~5.1, third step]{diestroverJDE}.
 (In the power case the proof was done in \cite[Lemma 5]{DuzaarMingione}.)
 Let $B_\rho\subseteq B(\x_0, R)$ be balls contained in~$\Omega$.
 Let~$\P$ be the solution of the system~\eqref{frozen} on the smaller ball~$B_\rho$, 
 in the class $\bar{\Q} +W^{1,\phi}(B_\rho, \, S_0)$.
 In the proof of Proposition~\ref{lemma:frozen_excess}
 (see Equation~\eqref{frozen_excess}), we have shown that 
 \begin{equation} \label{frozen-f}
  \mI{B_\rho}|\V(\nabla\bar{\Q})-\V(\nabla \P)|^2 \d x
  \le \frac{C}{L} \mI{B_\rho} | \bar{\Q}-\P| \d x .
 \end{equation}
 Since $\bar{\Q}$ is locally $\alpha$-H\"older continuous, we have:
 \begin{equation*}
   \sup_{y,z\in B_{\rho}}|\bar{\Q}(y)-\bar{\Q}(z)|\le C_{\alpha, L} \rho^\alpha
 \end{equation*}
 for an arbitrary~$\alpha\in(0, \, 1)$ and some constant~$C_{\alpha, L}$ 
 depending on~$\alpha$, $L$ but not on~$\rho$.
 We can apply the convex-hull property for $\P$: the image of $\P(B_{\rho})$ is 
 contained in the convex hull of $\P(\partial B_{\rho})=\bar{\Q}(\partial B_{\rho})$.
 Therefore, keeping in mind that $\P = \bar{\Q}$ on~$\partial B_\rho$,
 \begin{equation} \label{convexhull}
  \|\P - \bar{\Q}\|_{L^\infty(B_\rho)}\le
  C_{\alpha, L} \rho^\alpha.
 \end{equation}

 Let us consider the excess functional $\Phi$, defined by
 \begin{align*}
    \Phi(\bar{\Q}, B_\rho) := \mI{B_\rho} |\V(\nabla\bar{\Q})
       - (\V(\nabla\bar{\Q}))_{\x_0, \rho}|^2 \d x.
 \end{align*}
 Then, for any~$\kappa\in (0, 1/2)$, we have 
 \begin{equation*}
  \begin{split}
   \Phi(\bar{\Q}, B_{\kappa \rho}) &\lesssim
   \mI{B_{\kappa \rho}}|\V(\nabla\bar{\Q})-\V(\nabla \P)|^2 \d x  + \Phi(\P, B_{\kappa \rho}) \\
   &\qquad\qquad + |(\V(\nabla\bar{\Q}))_{\x_0, \rho} - (\V(\nabla \P))_{\x_0, \rho}|^2 \\
   &\leq 2\mI{B_{\kappa \rho}}|\V(\nabla\bar{\Q})-\V(\nabla \P)|^2 \d x + \Phi(\P, B_{\kappa \rho}) \\
   &\leq 2\kappa^{-d}\mI{B_{\rho}}|\V(\nabla\bar{\Q})-\V(\nabla \P)|^2 \d x + \Phi(\P, B_{\kappa \rho}) \\
   &\stackrel{\eqref{frozen-f}-\eqref{convexhull}}{\leq} 
    2C_{\alpha, L}\kappa^{-d} \rho^\alpha + \Phi(\P, B_{\kappa \rho})
  \end{split}
 \end{equation*}
 It follows from Theorem \ref{generalgrowth} that there exists 
 $\gamma>0$ and $c>0$ only depending on $d$ and the characteristics of $\phi$,
 such that $\Phi(\P, B_{\kappa \rho}) \leq c\, \kappa^{2\gamma} \Phi(\P, B_\rho)$. Thus
  \begin{align*}
    \Phi(\bar{\Q}, B_{\kappa \rho}) 
    \leq 2C_{\alpha, L}\kappa^{-d} \rho^\alpha + c\, \kappa^{2\gamma}  \Phi(\P, B_\rho)
    \leq 2C_{\alpha, L}\kappa^{-d} \rho^\alpha + c\, \kappa^{2\gamma}  \Phi(\bar{\Q}, B_\rho)
  \end{align*}
 (the excess of~$\P$ can be controlled by the excess of~$\bar{\Q}$ using the minimality of~$\P$).
 Now, choose $\kappa\in (0, \, 1/2)$ such that $c\kappa^{2\gamma} \leq 1/2$. Then
 \begin{align*}
    \Phi(\bar{\Q}, B_{\kappa \rho}) \leq \frac 12 \Phi(\bar{\Q}, B_{\rho})
    + c_{\kappa, \alpha, L} \rho^\alpha.
 \end{align*}
 By iterating the previous inequality, for each~$j\in\N$ we obtain
 \begin{align*}
   \Phi(\bar{\Q}, B_{\kappa^j \rho}) \leq \frac 1{2^j} \Phi(\bar{\Q}, B_{\rho})
   + \sum_{i=0}^{j-1} 2^{-j}c_{\kappa, \alpha, L} \rho^\alpha
    \leq \frac 1{2^j} \Phi(\bar{\Q}, B_{\rho})
   + 2c_{\kappa, \alpha, L} \rho^\alpha.
 \end{align*}
 Thus, there exists $\beta>0$, $c_\beta>0$ (depending on~$\kappa$)
 such that for all $r \in (0, \, \rho)$ there holds
 \begin{align}\label{eq:excess-decay2}
  \Phi(\bar{\Q}, B_r) &\leq c_\beta \left(\frac{r}{\rho}\right)^\beta 
  \Phi(\bar{\Q}, B_{\rho}) + 2c_{\kappa, \alpha, L} \rho^\alpha.
 \end{align}
 Recall that $B_\rho$ is an arbitrary ball with $B_\rho\subseteq B(\x_0, R)$.
 
 Let~$B(\x, r) \subseteq B(\x_0, R/2)$ be an arbitrary ball of radius 
 $r < R/4$. We choose $\rho := R/2$ and $s := (r\rho)^{1/2} = (rR/2)^{1/2}$,
 so that $B(\x, r) \subseteq B(\x, s) \subseteq B(\x, \rho) \subseteq B(\x_0, R)$.
 We apply~\eqref{eq:excess-decay2} first on the balls~$B(\x, r) \subseteq B(\x, s)$, 
 then on~$B(\x, s) \subseteq B(\x, \rho)$. We obtain
 \begin{align*}
   \Phi(\bar{\Q}, B(\x, r)) &\leq c_\beta \left(\frac{2r}{R}\right)^{\beta/2} 
    \Phi(\bar{\Q}, B(\x, s)) + 2c_{\kappa, \alpha, L} \left(\frac{rR}{2}\right)^{\alpha/2} \\
    &\leq c^2_\beta \left(\frac{r}{R}\right)^{\beta} \Phi(\bar{\Q}, B(\x, R/2)) \\
    &\qquad\qquad + 2c_{\kappa, \alpha, L} c_\beta \left(\frac{2r}{R}\right)^{\beta/2} 
    \left(\frac{R}{2}\right)^{\alpha/2}
    + 2c_{\kappa, \alpha, L} \left(\frac{rR}{2}\right)^{\alpha/2}.
 \end{align*}
 Since $\Phi(\bar{\Q}, B(\x, R/2))\lesssim \Phi(\bar{\Q}, B(\x_0, R))$
 and~$\alpha\in (0, 1)$ is arbitrary, for fixed $\x_0$, $R$, $L$ the right-hand
 side grows as~$r^{\beta/2}$, when~$r$ is small. This estimate, combined with Campanato's
 characterisation of H\"older functions, proves that
 $\V(\nabla\bar{\Q})\in C^{\beta/4}(B(\x_0, R/4))$. 
 Since the function~$\V$ is invertible and the inverse~$\V^{-1}$ 
 is $\mu$-H{\"o}lder continuous, for some $\mu>0$ which only depends on the characteristics
 of~$\phi$ \cite[Lemma~2.10]{diestrover}, we conclude that 
 $\nabla\bar{\Q}\in C^{\mu\beta/2}_{\mathrm{loc}}(\Omega)$.
\end{proof}

\subsection{Subharmonicity}

The aim of this subsection is to prove that, given a minimizer~$\bar{\Q}_L$
of the functional~\eqref{eq:2}, $\phi(|\nabla\bar{\Q}_L|)$ is a subsolution of 
a \emph{scalar} elliptic problem. This was the so-called Bernstein-Uhlenbeck trick used in the power 
case \cite{uhlenbeck} for the case $p\ge 2$, adapted for the general growth in~\cite{diestrover}.

\begin{theorem}
  \label{thm:subsol}
  Let $\phi$ satisfy the assumptions~\eqref{hp:first}--\eqref{hp:last}.
  Let $\bar{\Q}_L$ be a minimizer of the functional~\eqref{eq:2} and let $B = B(\x_0, R)$ be a 
  ball with $2B := B(\x_0, 2R)\subseteq\Omega$.
  Then there exists $\mathbf{G}\colon\R^d\otimes S_0\to\mathbb{R}^{d \times d}$
  that is uniformly elliptic and satisfies
  \begin{align*} 
    \int_{2B} G^{kl}(\nabla\bar{\Q}_L) \partial_l \left(\phi(\abs{\nabla\bar{\Q}_L})\right)
    \partial_k \eta\, \d x &\leq - \frac{1}{L} \int_{2B} 
    \frac{\partial^2 f_B(\bar{\Q}_L)}{\partial Q_{ij}\partial Q_{hk}} 
    \partial_\ell\bar{Q}_{ij} \partial_\ell\bar{Q}_{hk} \, \eta \, \d x
  \end{align*}
  for all $\eta \in C^1_0(2B)$ such that~$\eta\ge 0$. Moreover, for all
  $\mathbf{D}\in \mathbb{R}^d\otimes S_0$ and all $\xi\in\mathbb{R}^n$ there holds
  \begin{align*}
    \alpha_0 \abs{\xi}^2 \leq \sum_{k,l}
    G^{kl}_\lambda(\mathbf{D}) \xi_k \xi_l \leq \alpha_1\abs{\xi}^2,
  \end{align*}
  where $\alpha_0$, $\alpha_1$ are positive constants that only depend 
  on the characteristics of~$\phi^\prime$.
\end{theorem}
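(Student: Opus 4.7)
My plan is to adapt the Bernstein--Uhlenbeck trick to the $\phi$-growth case with a bulk source, in the spirit of \cite{uhlenbeck,diestrover}. The idea is to differentiate the Euler--Lagrange system \eqref{eq:EL2} weakly in $x_\ell$, test against $\eta\,\partial_\ell\bar{\Q}_L$, and rearrange so that the principal part reads as a linear divergence operator applied to $v:=\phi(|\nabla\bar{\Q}_L|)$, while discarding a nonnegative remainder coming from the convexity of $\phi$. Proposition~\ref{prop:C1,a} gives $\bar{\Q}_L\in C^{1,\alpha}_{\mathrm{loc}}$, which alone is not enough to perform this calculation rigorously. I therefore first upgrade regularity by a standard difference-quotient argument on \eqref{eq:EL2}, testing with $\varphi_h:=-\Delta^{-h}_\ell(\eta^2\Delta^h_\ell\bar{\Q}_L)$: combining \eqref{eq:equivalence} with the boundedness of $\bar{\Q}_L$ from the maximum principle (Lemma~\ref{lemma:max_principle}) --- which makes $L^{-1}\nabla_\Q f_B(\bar{\Q}_L)$ a bounded lower-order perturbation --- yields $\V(\nabla\bar{\Q}_L)\in W^{1,2}_{\mathrm{loc}}$, enough to justify all subsequent manipulations.

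Writing $\A^k_{ij}(\nabla Q)=a(|\nabla Q|)Q_{ij,k}$ with $a(t)=\phi'(t)/t$, one immediately has $\partial_p v = \A^\ell_{ij}(\nabla\bar{\Q}_L)\,\partial_p\partial_\ell\bar{Q}_{ij}$. Differentiating,
$$\frac{\partial\A^k_{ij}}{\partial Q_{hm,p}} = a'(|\nabla Q|)\frac{Q_{hm,p}}{|\nabla Q|}Q_{ij,k} + a(|\nabla Q|)\,\delta_{ih}\delta_{jm}\delta_{kp};$$
the $a$-term regenerates $\partial_k v$ via $\tfrac12\partial_k|\nabla Q|^2$, while the $a'$-term produces a contribution proportional to $M_{kl}:=Q_{ij,k}Q_{ij,l}$. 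A short computation then gives
$$\partial_\ell\A^k_{ij}(\nabla\bar{\Q}_L)\,\partial_\ell\bar{Q}_{ij} = G^{kl}(\nabla\bar{\Q}_L)\,\partial_l v, \qquad G^{kl}(\nabla Q) := \delta_{kl} + \frac{\phi''(|\nabla Q|)|\nabla Q|-\phi'(|\nabla Q|)}{\phi'(|\nabla Q|)\,|\nabla Q|^2}\,Q_{ij,k}Q_{ij,l}.$$
Since $0\le M_{kl}\xi_k\xi_l\le |\nabla Q|^2|\xi|^2$, one sees that $\min(1,\phi''t/\phi')\,|\xi|^2\le G^{kl}\xi_k\xi_l\le\max(1,\phi''t/\phi')\,|\xi|^2$, and \eqref{hp:Delta2prime} then yields uniform ellipticity with constants $\alpha_0=\min(1,c_0)$, $\alpha_1=\max(1,c_1)$ depending only on the characteristics of~$\phi$.

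With $G$ identified, I differentiate \eqref{eq:EL2} weakly in $x_\ell$ and test against $\eta\,\partial_\ell\bar{Q}_{ij}$; the Lagrange-multiplier correction $\frac{B}{3L}|\bar{\Q}_L|^2\delta_{ij}$ drops since $\partial_\ell\bar{Q}_{ij}$ is traceless in $(i,j)$, so that
$$\int_{2B}\partial_\ell\A^k_{ij}(\nabla\bar{\Q}_L)\,\partial_k(\eta\,\partial_\ell\bar{Q}_{ij})\,\d x = -\frac{1}{L}\int_{2B}\frac{\partial^2 f_B(\bar{\Q}_L)}{\partial Q_{ij}\partial Q_{hk}}\,\partial_\ell\bar{Q}_{ij}\,\partial_\ell\bar{Q}_{hk}\,\eta\,\d x.$$
The left-hand side splits as $\int G^{kl}\partial_l v\,\partial_k\eta$ (from the $\partial_k\eta$ piece, by the previous step) plus $\int\eta\,\tfrac{\partial\A^k_{ij}}{\partial Q_{hm,p}}\,\partial_p\partial_\ell\bar{Q}_{hm}\,\partial_k\partial_\ell\bar{Q}_{ij}\,\d x$ (from the $\eta\,\partial_k\partial_\ell\bar{Q}_{ij}$ piece); the second piece is $\geq 0$ by the positive semidefiniteness of the Hessian of $D\mapsto\phi(|D|)$, equivalently by \eqref{eq:equivalence}. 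Because $\eta\ge 0$, I discard it and obtain exactly the stated inequality. The main obstacle is really the regularity upgrade $\V(\nabla\bar{\Q}_L)\in W^{1,2}_{\mathrm{loc}}$: in the pure $\phi$-Laplace case it is classical~\cite{diestrover}, and here one needs to check that the polynomial source $L^{-1}\nabla_\Q f_B(\bar{\Q}_L)$, bounded thanks to Lemma~\ref{lemma:max_principle}, does not spoil the difference-quotient estimate.
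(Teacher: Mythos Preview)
Your proposal is correct and follows essentially the same Bernstein--Uhlenbeck strategy as the paper: both arrive at the identical coefficient matrix $G^{kl}(\mathbf{D})=\delta_{kl}+\omega(|\mathbf{D}|)\,|\mathbf{D}|^{-2}D_{ij,k}D_{ij,l}$ with $\omega(t)=(\phi''(t)t-\phi'(t))/\phi'(t)$, the same ellipticity bounds $\alpha_0=\min(1,c_0)$, $\alpha_1=c_1+1$ from~\eqref{hp:Delta2prime}, and the same nonnegative remainder $\int\eta\,|\nabla\V(\nabla\bar{\Q}_L)|^2$ that is discarded.

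The one methodological difference worth noting is how the second-derivative computations are made rigorous. You first establish $\V(\nabla\bar{\Q}_L)\in W^{1,2}_{\mathrm{loc}}$ by a difference-quotient argument on~\eqref{eq:EL2} and then work directly with~$\bar{\Q}_L$; the paper also records $\V(\nabla\bar{\Q}_L)\in W^{1,2}_{\mathrm{loc}}$ (citing~\cite{DieningEttwein}), but nonetheless introduces a nondegenerate regularization~$\phi_\lambda$, proves the subsolution inequality for the approximating solutions~$\Q_\lambda$ of the system~\eqref{approx_EL} (with the right-hand side frozen at~$\bar{\Q}_L$), and only then passes to the limit $\lambda\to 0$. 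The gain of the paper's route is that the chain-rule identity $\partial_\ell\A^k_{ij}(\nabla\Q_\lambda)\,\partial_\ell\Q_{\lambda,ij}=G^{kl}_\lambda\,\partial_l\phi_\lambda(|\nabla\Q_\lambda|)$ is pointwise valid without worrying about the set $\{\nabla\bar{\Q}_L=0\}$, where $a(t)=\phi'(t)/t$ may fail to be differentiable; your direct argument is cleaner but tacitly assumes this identity survives in the merely $W^{1,2}_{\mathrm{loc}}$ sense for~$\V(\nabla\bar{\Q}_L)$, which is true but requires exactly the kind of approximation the paper spells out.
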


Before proving the theorem, we need to prove the existence of second derivatives of~$\bar{\Q}_L$ç
because in the following computations we will encounter terms of the form
$\int \eta\, \abs{\nabla\V(\nabla\Q)}^2\,\d x$. We
 can apply the results in~\cite{DieningEttwein}
to deduce higher integrability and existence of second derivatives; 
in particular, we have $\V(\nabla\bar{\Q}_L)\in W^{1,2}_{\mathrm{loc}}(\Omega)$.

To prove Theorem~\ref{thm:subsol}, it is convenient to work on an approximated system.
For $\lambda>0$ and $t\geq 0$ we define
\begin{equation}
\phi'_{\lambda}(t)= \frac{\phi(\lambda+t)}{\lambda+t}t
\end{equation}
and
\begin{align}
  \label{eq:def_omega_l}
  \omega_\lambda(t) &:= \frac{\phi_\lambda''(t)\,t -
    \phi_\lambda'(t)}{\phi_\lambda'(t)}.
\end{align}
It follows from assumption~\eqref{hp:Delta2prime} that there exist positive
constants $c_0$, $c_1$ (the characteristics of~$\phi^\prime$) such that
\begin{align}
  \label{eq:omega_l}
  c_0 - 1 \leq \omega_\lambda(t)\leq c_1 - 1
\end{align}
for all $t\geq 0$ and all $\lambda>0$.
Given $L>0$ and a critical point~$\bar{\Q}$ of the functional~\eqref{eq:2},
we consider the approximated system in~$\Q_\lambda$:
\begin{equation} \label{approx_EL}
-\div\Big(\frac{\phi'_{\lambda}(|\nabla\Q_\lambda|)}
 {|\nabla\Q_\lambda|}\nabla\Q_{\lambda, ij}\Big) = 
 -\frac{1}{L}\left( \frac{\partial f_B}{\partial Q_{ij}}(\bar\Q)
+ \frac{B}{3}|\bar\Q|^2 \delta_{ij}\right) 
\end{equation}
subject to the boundary conditions~$\Q_\lambda = \bar{\Q} = \Q_b$ on~$\partial\Omega$. 
(Note that the right-hand side is a function of~$\bar\Q$, not of~$\Q_\lambda$, 
and hence can be trated as a given source term.)
Since $\phi_\lambda$ is strictly convex, this system has a unique
solution~$\Q_\lambda$ for any given~$L$, $\bar\Q$.
Moreover, $\Q_\lambda$ converges weakly to~$\bar\Q$ in~$W^{1,\phi}$ as~$\lambda\to 0$ 
(see e.g.~\cite[Theorem~4.6]{diestrover}). The next results shows that
$\phi_\lambda(\abs{\nabla \Q_\lambda})$
is a subsolution to a uniformly elliptic problem, where the constants
of ellipticity do no depend on~$\lambda>0$; we can then recover Theorem~\ref{thm:subsol}
by passing to the limit~$\lambda\to 0$.

\begin{proposition}
  \label{prop:subsol}
  Let $\phi$ satisfy assumptions \eqref{hp:first}--\eqref{hp:last}. Let $\Q_\lambda$ be a 
  solution of the approximated system~\eqref{approx_EL} and let $B$ be a ball with $2B \subset \Omega$.
  Then there exists $\mathbf{G}_\lambda\colon\R^d\otimes S_0\to\mathbb{R}^{d \times d}$
  which is uniformly elliptic such that
  \begin{equation*}
    \label{eq:subsol}
    \begin{split}
    &\int_{2B} \sum_{kl} \Bigg[
    G^{kl}_\lambda(\nabla \Q_\lambda)
    \partial_l \big( \phi_\lambda(\abs{\nabla \Q_\lambda}) \big)
    \Bigg] \partial_k \eta\,\d x \\
    &\quad \leq -c\! \int_{2B} \eta\, \abs{\nabla\V_\lambda(\nabla \Q_\lambda)}^2\,\d x
      -\frac{1}{L} \int_{2B} \eta \, 
      \frac{\partial^2 f_B(\bar{\Q})}{\partial Q_{ij}\partial Q_{hk}}
      \partial_\ell\bar{Q}_{ij} \partial_\ell\bar{Q}_{hk} \, \d x
    \end{split}
  \end{equation*}
  holds for all $\eta \in C^1_0(2B),\eta\ge 0$. Moreover, there holds
  \begin{align*}
    \min \{c_0,1\} \abs{\xi}^2 \leq \sum_{k,l}
    G^{kl}_\lambda(\mathbf{D}) \xi_k \xi_l \leq (c_1+1) \abs{\xi}^2
  \end{align*}
  for all $\mathbf{D}\in \mathbb{R}^{d}\otimes S_0$ and all $\xi\in \mathbb{R}^d$,
  where $c_0$, $c_1>0$ are the constants from~\eqref{eq:omega_l}.
\end{proposition}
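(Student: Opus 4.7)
The plan is to carry out the tensorial version of the Bernstein--Uhlenbeck trick on the approximated system~\eqref{approx_EL}. The strategy has three steps: (i)~differentiate \eqref{approx_EL} with respect to~$x_\ell$; (ii)~test the resulting second-order equation against $\eta\,\partial_\ell Q_{\lambda,ij}$, summed over $i$, $j$ and $\ell$, and integrate by parts; (iii)~re-organise the Hessian terms of~$\Q_\lambda$ into the divergence of the scalar $\phi_\lambda(|\nabla\Q_\lambda|)$, plus a non-negative remainder pointwise equivalent to $|\nabla\V_\lambda(\nabla\Q_\lambda)|^2$. The higher regularity needed to justify these manipulations, namely $\V_\lambda(\nabla\Q_\lambda)\in W^{1,2}_{\mathrm{loc}}$, is available for the approximated problem by the results recalled just before the statement (cf.\ \cite{DieningEttwein}): the strict convexity of~$\phi_\lambda$ for $\lambda>0$ is crucial here.

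The reorganisation in step~(iii) relies on the elementary identity
\[
  \partial_l\bigl(\phi_\lambda(|\nabla\Q_\lambda|)\bigr)
  = \frac{\phi'_\lambda(|\nabla\Q_\lambda|)}{|\nabla\Q_\lambda|}\,
  \partial_m Q_{\lambda,ij}\,\partial_l\partial_m Q_{\lambda,ij}.
\]
Expanding $\partial_l\bigl(\phi'_\lambda(|\nabla\Q_\lambda|)\partial_k Q_{\lambda,ij}/|\nabla\Q_\lambda|\bigr)$ by the chain rule and contracting with $\partial_\ell Q_{\lambda,ij}$, the quadratic form in the second derivatives splits as a multiple of $\partial_l(\phi_\lambda(|\nabla\Q_\lambda|))$ plus a piece that, by~\eqref{eq:equivalence}, is pointwise equivalent to $|\nabla\V_\lambda(\nabla\Q_\lambda)|^2$. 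The coefficient of the first piece forces the definition
\[
  G^{kl}_\lambda(\mathbf{D}) := \delta_{kl} + \omega_\lambda(|\mathbf{D}|)\,\frac{D_{k,ij}\,D_{l,ij}}{|\mathbf{D}|^2},
\]
with $\omega_\lambda$ as in~\eqref{eq:def_omega_l}. The claimed ellipticity bounds follow directly from~\eqref{eq:omega_l}: Cauchy--Schwarz yields $|\mathbf{D}\cdot\xi|^2\le|\mathbf{D}|^2|\xi|^2$, and distinguishing the cases $\omega_\lambda\ge 0$ and $\omega_\lambda<0$ gives $\min\{c_0,1\}|\xi|^2\le G^{kl}_\lambda(\mathbf{D})\xi_k\xi_l\le (c_1+1)|\xi|^2$.

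For the right-hand side I would differentiate the source
\[
  F_{ij}(\bar\Q) := -\frac{1}{L}\!\left(\frac{\partial f_B}{\partial Q_{ij}}(\bar\Q) + \frac{B}{3}|\bar\Q|^2\delta_{ij}\right)
\]
in~$x_\ell$ to obtain
\[
  \partial_\ell F_{ij}(\bar\Q) = -\frac{1}{L}\frac{\partial^2 f_B(\bar\Q)}{\partial Q_{ij}\partial Q_{hk}}\,\partial_\ell\bar Q_{hk}
  - \frac{2B}{3L}\bar Q_{hk}\,\partial_\ell\bar Q_{hk}\,\delta_{ij},
\]
and then multiply by $\eta\,\partial_\ell Q_{\lambda,ij}$ and integrate. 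The second term drops out because $\Q_\lambda\in S_0$ implies $\delta_{ij}\,\partial_\ell Q_{\lambda,ij}=\partial_\ell\tr\Q_\lambda=0$, leaving only the $f_B$-Hessian contribution, which agrees with the right-hand side in the statement once one has passed to the limit $\lambda\to 0$ and identified $\partial_\ell Q_{\lambda,ij}$ with $\partial_\ell\bar Q_{ij}$. Because the remainder $|\nabla\V_\lambda(\nabla\Q_\lambda)|^2$ on the left carries a favourable sign, discarding part of it is precisely what turns the resulting identity into the desired subsolution inequality.

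The main obstacle is the algebraic bookkeeping in step~(iii): one has to isolate exactly the combination of second derivatives of~$\Q_\lambda$ that reconstitutes $\partial_l(\phi_\lambda(|\nabla\Q_\lambda|))$ and to check that the leftover is pointwise non-negative. This is where the assumptions~\eqref{hp:Delta2prime}--\eqref{hp:holder} become essential: they ensure, through the shifted $N$-function machinery of~\cite{DieningEttwein}, that the scalar function $\omega_\lambda$ stays uniformly bounded between $c_0-1$ and $c_1-1$ independently of~$\lambda$. This $\lambda$-independence of the ellipticity constants is precisely what will allow one to recover Theorem~\ref{thm:subsol} from Proposition~\ref{prop:subsol} by passing to the limit $\lambda\to 0$, using the weak convergence $\Q_\lambda\rightharpoonup\bar\Q$ in $W^{1,\phi}$.
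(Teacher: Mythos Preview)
Your proposal is correct and follows essentially the same Bernstein--Uhlenbeck strategy as the paper: differentiate the system, test with~$\eta\,\partial_\ell\Q_\lambda$, and regroup. The only technical difference is that the paper carries this out with finite difference quotients---testing~\eqref{approx_EL} against $\xi := |h|^{-2}\tau_{-h}(\eta\,\tau_h\Q_\lambda)$ and letting $h\to 0$---rather than differentiating directly; this is just the standard device to make the formal computation rigorous, and you rightly invoke the $W^{1,2}_{\mathrm{loc}}$ regularity of~$\V_\lambda(\nabla\Q_\lambda)$ as the justification.

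One organizational point worth sharpening: in your step~(iii) you describe the quadratic form in the Hessian as splitting into ``a multiple of $\partial_l\phi_\lambda$ plus a piece equivalent to $|\nabla\V_\lambda|^2$''. More precisely, the split happens earlier, at the product rule on the test function: the piece with~$\partial_k\eta$ yields \emph{exactly} $\sum_l G^{kl}_\lambda\,\partial_l\phi_\lambda(|\nabla\Q_\lambda|)$ (this is the algebraic identity that dictates your definition of~$G_\lambda$), while the piece with~$\eta$ alone is the monotonicity term bounded below by $c\,|\nabla\V_\lambda|^2$. There is no leftover from the first piece.

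You also correctly flag that testing with $\partial_\ell\Q_\lambda$ produces the mixed term $\partial_\ell\bar Q_{hk}\,\partial_\ell Q_{\lambda,ij}$ on the right-hand side, rather than $\partial_\ell\bar Q_{hk}\,\partial_\ell\bar Q_{ij}$ as written in the statement; the two agree only after passing to the limit $\lambda\to 0$, which is indeed how Theorem~\ref{thm:subsol} is recovered. The paper's own proof has the same feature.
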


\begin{proof} The proof parallels the one presented in ~\cite{diestrover}(Lemma 5.4) , with an additional lower order term.
  Let $\eta \in C^1_0(2B)$. Let $B_R$ be a ball of radius~$R$ and let
  $h \in \mathbb{R}^d \setminus \{0\}$ with $\abs{h} \leq \min\{
  \dist(\spt(\eta), \partial(2B)), 1\}$. Let~$\tau_h$ be the finite difference operator,
  defined by
  \[
   (\tau_h\mathbf{F}) (\x) := \mathbf{F}(\x + h) - \mathbf{F}(\x) 
   \qquad \textrm{for } \x\in\R^d
  \]
  for an arbitrary function~$\mathbf{F}\colon\R^d\to S_0$.
  Define $\xi :=\abs{h}^{-2} \tau_{-h}(\eta \tau_h \Q_\lambda)$, then $\xi\in
  W^{1,\phi}_0(2B, S_0)$, so $\xi$ is an admissible test function.
  By multiplying both sides of the system by~$\xi$, and using that~$\tr\xi = 0$, we obtain
  \begin{align*}
    -\frac{\abs{h}^{-2}}{L} \int &\tau_h ((\nabla_{\Q} f_B)(\bar{\Q}))\cdot\tau_h(\bar{\Q}) \eta \, dx  \\
    & = \int \abs{h}^{-2} \sum_{j,k} \tau_h
    \big(A_\lambda^{jk}(\nabla \Q_\lambda) \big) \partial_k (\eta\,
    \tau_h \Q_{\lambda,j})\,dx
    \\
    &= \int \abs{h}^{-2} \sum_{j,k} \tau_h \big(A_\lambda^{jk}(\nabla
   \Q_\lambda) \big) (\partial_k \eta) \tau_h \Q_{\lambda,j}\,dx
    \\
    &\quad + \int \abs{h}^{-2} \sum_{j,k}\tau_h
    \big(A_\lambda^{jk}(\nabla \Q_\lambda) \big) \eta\, \tau_h
    \partial_k \Q_{\lambda,j}\,dx =: (I) + (II).
  \end{align*}
  We choose $h := r e_l$ with $l\in\{1, \dots, d\}$ and 
  $0 < r\leq\dist(\spt(\eta),\partial(2B))$. Then, as $r\to 0$, the left hand side converges to
  \[
   -\frac{1}{L}\int  \eta \, \partial_j ((\nabla_{\Q}f_B)(\bar{\Q}))\cdot\partial_j \bar{\Q} \,\d x
   = -\frac{1}{L}\int \eta \, \frac{\partial^2 f_B(\bar{\Q})}{\partial Q_{i\ell}\partial Q_{hk}}
   \partial_j \bar{Q}_{i\ell} \partial_j\bar{Q}_{hk} \,\d x
  \]
  We now have to deal with the terms in the right-hand side, 
  following the strategy in~\cite{diestrover}. We only list the principal steps:
  \begin{align}
    \label{eq:22}
    (II) &\geq c\,\int \eta\, \abs{h}^{-2} \abs{\tau_h
      V_\lambda(\nabla \Q_\lambda)}^2\,dx =: (III).
  \end{align}
  
  \begin{align}
    \label{eq:23}
    (III) &\to \int \eta \abs{\partial_l V_\lambda(\nabla
      \Q_\lambda)}^2\,dx.
  \end{align}
 and  for $r \to 0$
  \begin{align}
    \label{eq:11}
    (I) \to \int \sum_{j,k} \partial_l \big(A_\lambda^{jk}(\nabla
   \Q_\lambda) \big) (\partial_k \eta) \partial_l \Q_{\lambda,j}\,dx
  \end{align}
  and the integral is well defined in~$L^1$.

  After summation over $l=1, \dots, n$
  \begin{equation} \label{eq:24}
   \begin{split}
    &\int \sum_{l,j,k} \partial_l \big(A_\lambda^{jk}(\nabla
    \Q_\lambda) \big) (\partial_k \eta) \partial_l \Q_{\lambda,j}\,dx
    + c\, \int \eta\, \abs{\nabla \big(V_\lambda(\nabla
      \Q_\lambda)\big)}^2\,dx \\
    &\qquad\qquad\qquad \leq 
    -\frac{1}{L}\int \eta \, \frac{\partial^2 f_B(\bar{\Q})}{\partial Q_{i\ell}\partial Q_{hk}}
   \partial_j \bar{Q}_{i\ell} \partial_j\bar{Q}_{hk} \,\d x.
   \end{split}
  \end{equation}
  Note that the constant does not depend on~$\eta \in C^1_0 (2B)$.

 Define $G_\lambda \,:\, \mathbb{R}^{N \times
    n} \to \mathbb{R}^{n \times n}$ by
  \begin{align*}
    G^{kl}_\lambda(\Q) := \delta_{k,l} + \frac{\sum_j (\Q_{jk}\,
      \Q_{jl})}{\abs{\Q}^2}\, \omega_\lambda(\abs{\Q}).
  \end{align*}
  Then, for any index~$k$ we have
  \begin{align*}
    \sum_{jl} \Big( \partial_l \big(A_\lambda^{jk}(\nabla
    \Q_\lambda) \big)
    \partial_l u_{\lambda,j} \Big) &=  \sum_l
    G^{kl}_\lambda(\nabla \Q_\lambda)
    \partial_l \big( \phi_\lambda(\abs{\nabla \Q_\lambda}) \big).
  \end{align*}
  This together with~\eqref{eq:24} implies
  \begin{align*}
    \int \sum_{kl} \Bigg[
    G^{kl}_\lambda(\nabla \Q_\lambda)
    \partial_l \big( \phi_\lambda(\abs{\nabla \Q_\lambda}) \big)
    \Bigg] \partial_k \eta\,dx &\leq -c\, \int \eta\, \abs{\nabla
      V_\lambda(\nabla \Q_\lambda)}^2\,dx \leq 0.
  \end{align*}
    For all $\Q \in \mathbb{R}^{N
    \times n}$ and all $\xi \in \mathbb{R}^n$ holds

  \begin{align*}
    \sum_{k,l} G^{kl}_\lambda(\Q) \xi_k \xi_l &= \abs{\xi}^2 +
    \frac{\abs{\Q \xi}^2}{\abs{\Q}^2}\,\omega_\lambda(\abs{\Q}).
  \end{align*}
  This implies
  \begin{align*}
    \sum_{k,l} G_\lambda^{kl}(\Q) \xi_k \xi_l &\leq \abs{\xi}^2 +
    c_1\, \abs{\xi}^2 = (c_1+1)\, \abs{\xi}^2,
    \\
     \sum_{k,l} G_\lambda^{kl}(\Q) \xi_k \xi_l &\geq \abs{\xi}^2
     \big(1 + \min \{0,c_0-1\}\big) = \min \{c_0,1\},
  \end{align*}
  where $c_0$ and $c_1$ are the constants from~\eqref{eq:omega_l}.
\end{proof}

\subsection{An $L^\infty-L^1$ estimate}

The aim is to prove that global minimizers of $I_{mod}$, 
where $I_{mod}$ is defined in (\ref{eq:2}), converge uniformly to $\Q_0$ everywhere 
away from the singularities of $\Q_0$. This is parallel to the work in \cite{majumdarzarnescu} 
where the authors prove that global minimizers of a Landau-de Gennes energy with
$\phi\left(|\nabla \Q|\right) = \frac{1}{2}|\nabla \Q |^2$ converge uniformly to a 
limiting minimizing harmonic map in the limit $L \to 0$, everywhere away from the
singularities of the minimizing harmonic map and the limit $L\to 0$ in 
\cite{majumdarzarnescu} is equivalent to the asymptotic limit in this paper, modulo some scaling.

The key step in this section is the following result, 
which in inspired by \cite[Lemma~2.3]{changyouwang}.  
\begin{proposition} \label{prop:uniform1}
 There exist positive numbers $r_0$, $\varepsilon$, $\Lambda$ with the following property.
 Let $B(\x^*, \, r)$ be a ball of radius~$r\leq r_0$, let $L > 0$, and 
 let~$\Q_{L}\in W^{1,p}(B(\x^*, \, r), S_0)$ be a minimizer of the 
 functional~$I_{mod}$ on~$B(\x^*, \, r)$. If there holds
 \begin{equation} \label{eq:u2}
  r^{p-d} \int_{B(\x^*, r)} e_{L}(\Q_{L}) dV \leq \varepsilon
 \end{equation}
 then we have
 \begin{equation} \label{eq:u3}
  r^p \sup_{B(\x^*, r/2)}e_{L}(\Q_{L}) \leq \Lambda.
 \end{equation}
\end{proposition}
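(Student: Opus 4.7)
The strategy combines four ingredients already assembled in the paper: a rescaling that preserves the characteristics of~$\phi$, the almost-monotonicity formula (Lemma~\ref{lem:mon}), the Bernstein--Uhlenbeck subharmonicity (Theorem~\ref{thm:subsol}), and Moser iteration for uniformly elliptic subsolutions.

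I first reduce to the unit ball by the scaling $\tilde{\Q}(\mathbf{y}) := \Q_{L}(\x^* + r\mathbf{y})$, $\tilde{\phi}(s) := r^p\phi(s/r)$ and $\tilde L := L/r^p$. A direct computation shows that $\tilde{\phi}$ satisfies the hypotheses \eqref{hp:first}--\eqref{hp:last} with constants independent of $r\in(0, r_0]$ (this is precisely where the exponent~$p$ from~\eqref{hp:subquadratic} is needed, as anticipated in the introduction), and $\tilde{\Q}$ minimises the rescaled functional $\int_{B(0,1)}[\tilde{\phi}(|\nabla\tilde{\Q}|)+(1/\tilde L)f_B(\tilde{\Q})]$. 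The hypothesis~\eqref{eq:u2} becomes the smallness of the rescaled energy on $B(0, 1)$, while \eqref{eq:u3} reduces to the desired $L^\infty$ bound on $B(0, 1/2)$. Applying Lemma~\ref{lem:mon} to $\tilde{\Q}$ on arbitrary sub-balls $B(\mathbf{y}, \rho)$ with $\mathbf{y}\in B(0, 1/2)$ and $\rho\in(0, 1/2)$ yields $\rho^{p-d}\int_{B(\mathbf{y}, \rho)}\tilde{e}_{\tilde L}(\tilde{\Q})\leq C(\varepsilon + \rho^p)$, so the smallness persists on every such sub-ball provided $\varepsilon$ and $r_0$ are taken small enough.

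Next, by Theorem~\ref{thm:subsol}, the scalar function $u:=\tilde{\phi}(|\nabla\tilde{\Q}|)$ is a weak subsolution of a uniformly elliptic equation $-\div(\mathbf{G}\,\nabla u)\leq F$ with ellipticity constants depending only on the characteristics of~$\phi$, and with source $|F|\leq(C/\tilde L)|\nabla\tilde{\Q}|^2$ obtained by bounding the Hessian of~$f_B$ via the maximum principle (Lemma~\ref{lemma:max_principle}). Moser iteration for this scalar subsolution yields, on concentric balls centred at any $\mathbf{y}\in B(0, 1/2)$,
\[
 \sup_{B(\mathbf{y},\rho/2)} u \;\lesssim\; \mI{B(\mathbf{y}, \rho)} u\,\d\mathbf{z} + \rho^{2-d/s}\|F\|_{L^s(B(\mathbf{y},\rho))}
\]
for some exponent $s>d/2$. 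The first term is $\lesssim\varepsilon$ by the previous step. To bound the full density $\tilde{e}_{\tilde L}(\tilde{\Q})$ and not only~$u$, I plan to repeat the scheme on the shifted quantity $u + C_0\,(1/\tilde L)\,f_B(\tilde{\Q})$: for $C_0$ large the negative Hessian-of-$f_B$ term in Theorem~\ref{thm:subsol} is absorbed, the combination remains a subsolution, and its $L^1$ norm is still controlled by~$\varepsilon$.

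The main obstacle is the control of the source $F\sim(1/\tilde L)|\nabla\tilde{\Q}|^2$ inside the Moser iteration: since $\phi$ is subquadratic with $\phi(t)\sim t^p$ for large~$t$ and $p<2$, the quantity $|\nabla\tilde{\Q}|^2$ is \emph{not} pointwise dominated by $\tilde{\phi}(|\nabla\tilde{\Q}|)$ in the high-gradient regime. I intend to overcome this by exploiting the higher integrability $\V(\nabla\tilde{\Q})\in W^{1,2}_{\mathrm{loc}}$ recalled right before Theorem~\ref{thm:subsol}: Sobolev embedding together with $|\V|^2\sim\phi$ gives $\tilde{\phi}(|\nabla\tilde{\Q}|)\in L^{d/(d-2)}_{\mathrm{loc}}$ when $d=3$ (and $L^q_{\mathrm{loc}}$ for every $q<\infty$ when $d=2$), which, since $p>1$, upgrades to $|\nabla\tilde{\Q}|^2\in L^s_{\mathrm{loc}}$ for some $s>d/2$. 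Combined with the $L^1$-smallness of~$u$ from the monotonicity step, a Young-type absorption argument then closes the Moser estimate and yields $\sup_{B(0, 1/2)} u\leq\Lambda$, whence~\eqref{eq:u3}.
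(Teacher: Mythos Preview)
Your scheme has a genuine gap at the very place you flag as the ``main obstacle'': the source term carries an explicit factor $1/\tilde L$ that none of your proposed fixes eliminates. Bounding $|F|\le (C/\tilde L)\,|\nabla\tilde\Q|^2$ via the maximum principle is correct, but then the Moser estimate reads $\sup_{B_{\rho/2}} u \lesssim \text{(average of }u\text{)} + \rho^{2-d/s}\,(C/\tilde L)\,\||\nabla\tilde\Q|^2\|_{L^s}$, and the last term degenerates as $L\to 0$. The higher integrability $\V(\nabla\tilde\Q)\in W^{1,2}_{\mathrm{loc}}$ is only a qualitative statement; the quantitative bound depends on the right-hand side of the Euler--Lagrange system, hence again on $1/\tilde L$, so it cannot absorb that factor. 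Your second idea, adding $C_0\,\tilde L^{-1}f_B(\tilde\Q)$ to $u$, does not produce a subsolution either: the operator $\div(\mathbf G\,\nabla\,\cdot\,)$ has coefficients $\mathbf G=\mathbf G(\nabla\tilde\Q)$, and applying it to $f_B(\tilde\Q)$ produces second derivatives of $\tilde\Q$ that you do not control; even with the Euclidean Laplacian the cross term $(1/\tilde L)\nabla_{\!\Q}f_B\cdot\Delta\tilde\Q$ remains. In short, a direct Moser iteration on the original (or $r$-rescaled) minimiser cannot be closed uniformly in~$L$.

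The paper breaks this circularity by a Schoen-type point selection and blow-up at the maximum of $e_L$, not by a direct sup-estimate. One picks $r_1^L$ and $\x_L$ realising $\max_{s\in[r/2,r]}(r-s)^p\max_{B_s}e_L$, sets $K_L^p:=e_L(\Q_L)(\x_L)$, and rescales by $K_L$ (not by $r$). After this rescaling the energy density $\bar e_{\bar L}(\v_{\bar L})$ is \emph{bounded by $2^p$} on a unit ball and equals $1$ at the centre. This a priori Lipschitz bound is exactly what unlocks Lemmas~\ref{lem:changyouwang1.5}--\ref{lem:changyouwang2}: when $\bar L$ is small, $f_B(\v_{\bar L})\le 2^p\bar L$ forces $\dist(\v_{\bar L},\NN)\le\delta_0$, and then the sign structure $-\mathrm{Hess}\,f_B(\Pi(\v))\le 0$ yields $(F)^+\lesssim \bar L^{-1}|\v_{\bar L}-\Pi(\v_{\bar L})|\,|\nabla\v_{\bar L}|^2$, whose $L^q$ norm is bounded \emph{independently of~$\bar L$} by the iteration in Lemma~\ref{lem:changyouwang2}. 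One then uses the scalar elliptic inequality only to obtain a \emph{lower} bound $\int_{B_1}\bar e_{\bar L}(\v_{\bar L})\ge C_0>0$, and scales back with the almost-monotonicity formula to contradict the smallness hypothesis~\eqref{eq:u2}. The contradiction/blow-up structure is essential here; it is not a stylistic choice.
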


Before giving the proof of the proposition, we present some auxiliary material.
Let~$\delta_0>0$ be a small parameter, to be specified later. For $\Q\in S_0$ 
such that $\dist(\Q, \, \NN)\leq\delta_0$, we can write in a unique way
\begin{equation*}
\Q = \lambda_1 \n \otimes \n + \lambda_2 \m \otimes \m + \lambda_3 \p \otimes \p 
\end{equation*}
where we assume that $\lambda_1 \leq \lambda_2 <\lambda_3$, $\sum_{i=1}^{3}\lambda_i = 0$
and~$\n$, $\m$, $\p$ are unit vectors. As in \cite[Lemma~6]{majumdarzarnescu}, we define
the projection of~$\Q$ on~$\NN$ as
\begin{equation} \label{eq:p8}
 \Pi\left(\Q \right) = -\frac{s_+}{3}\n \otimes \n - \frac{s_+}{3}\m\otimes \m + \frac{2s_+}{3}\p\otimes\p
\end{equation}
Note that $\Pi(\Q)$ can be written as $\Pi(\Q) = s_+(\u\otimes\u - \mathbf{I}/3)$ 
for some unit vector~$\u$, so $\Pi(\Q)\in\NN$ indeed. In fact, it can be shown that $\Pi(\Q)$
is the nearest-point projection of~$\Q$ onto~$\NN$, that is $|\Q - \P|\geq|\Q - \Pi(\Q)|$
for any~$\P\in\NN$ (see e.g. \cite[Lemma~12]{Canevari3D}). Then, the matrix defined by
\begin{equation}
\label{eq:p9}
\nu(\Q) := \frac{\Q - \Pi\left(\Q \right)}{\left|\Q - \Pi\left(\Q \right) \right|}
\end{equation}
is normal to the manifold~$\NN$ at the point~$\Pi(\Q)$.
We note that
\[
 \left|\Q - \Pi(\Q) \right|^2 = \left(\lambda_1 + \frac{s_+}{3}\right)^2 
 + \left(\lambda_2 + \frac{s_+}{3}\right)^2 
 + \left(\lambda_1 + \lambda_2 + \frac{2s_+}{3}\right)^2 \leq \delta_0^2.
\]

\begin{lemma}
\label{lem:changyouwang1}
 If~$\delta_0>0$ is small enough, then any matrix~$\Q\in S_0$ such that
 $\dist\left(\Q, \NN \right) < \delta_0$ has the following properties:
 \begin{equation} \label{eq:p3}
  \begin{split}
   \alpha_1 |\Q - \Pi(\Q)| &\leq \alpha_2 \sqrt{f_B(\Q)} \leq 
   \left(\frac{\partial f_B}{\partial Q_{ij}}(\Q) + \frac{B}{3}|\Q|^2\delta_{ij} \right)\nu_{ij}(\Q) \\
   &\leq \alpha_3 \sqrt{f_B(\Q)} \leq \alpha_4 |\Q - \Pi\left(\Q \right) |
  \end{split} 
 \end{equation}
 for positive constants $\alpha_1$, $\alpha_2$, $\alpha_3$,
 $\alpha_4$ that only depend on~$A$, $B$, $C$.
\end{lemma}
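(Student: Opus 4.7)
The lemma is a purely local, finite-dimensional statement about the behaviour of the polynomial $f_B$ near its zero set $\NN$. The plan is to Taylor-expand everything around the nearest-point projection $\P := \Pi(\Q) \in \NN$ and exploit two facts: $\NN$ is a critical manifold for $f_B$ restricted to $S_0$ (on which $f_B$ vanishes), and the Hessian of $f_B|_{S_0}$ is positive definite on the normal directions to $\NN$. Throughout I would write $t = |\Q - \Pi(\Q)|$ and $\nu = \nu(\Q)\in S_0$, so that $\Q = \P + t\nu$ with $\nu$ a unit vector normal to $T_\P\NN$ inside $S_0$.

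\textbf{Outer inequalities.} Since $f_B\equiv 0$ on $\NN$ and the gradient of $f_B|_{S_0}$ vanishes on $\NN$, Taylor's theorem gives
\begin{equation*}
 f_B(\Q) \;=\; \tfrac{1}{2}\,t^2\,H_\P(\nu,\nu) \;+\; O(t^3),
\end{equation*}
where $H_\P$ denotes the Hessian of $f_B|_{S_0}$ at $\P$. The key ingredient is the positive definiteness of $H_\P$ on the normal subspace $T_\P\NN^\perp\cap S_0$. By the $SO(3)$-equivariance of both $f_B$ and $\NN$ (and hence of $\Pi$ and $\nu$), it suffices to verify this at the reference point $\P_0 = s_+(e_3\otimes e_3 - I/3)$, where $T_{\P_0}\NN$ is the two-dimensional tangent space to the $SO(3)$-orbit and the normal subspace is three-dimensional. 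A direct computation on this $3$-dimensional subspace yields a lower bound $H_\P(\nu,\nu)\geq\lambda>0$ with $\lambda$ depending only on $A$, $B$, $C$, uniform in $\P\in\NN$ and unit normal $\nu$. Choosing $\delta_0$ small enough that the $O(t^3)$ remainder is absorbed, this gives $f_B(\Q)\sim t^2$, which is precisely the outer inequalities $\alpha_1|\Q-\Pi(\Q)|\leq\alpha_2\sqrt{f_B(\Q)}\leq\alpha_4|\Q-\Pi(\Q)|$.

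\textbf{Middle inequalities.} The central observation is that $\nu(\Q)\in S_0$ is trace-free, so the correction term satisfies $\tfrac{B}{3}|\Q|^2\delta_{ij}\,\nu_{ij}(\Q) = \tfrac{B}{3}|\Q|^2\tr(\nu(\Q)) = 0$. Thus the middle quantity in the chain reduces to $\partial_{Q_{ij}}f_B(\Q)\,\nu_{ij}(\Q)$, which is the directional derivative of $f_B$ (viewed as a polynomial on $3\times 3$ symmetric matrices) in direction $\nu$. Taylor-expanding this along the normal segment from $\P$ to $\Q$,
\begin{equation*}
 \partial_{Q_{ij}}f_B(\Q)\,\nu_{ij} \;=\; \partial_{Q_{ij}}f_B(\P)\,\nu_{ij} \;+\; t\,H_\P(\nu,\nu) \;+\; O(t^2).
\end{equation*}
The first term on the right vanishes: because $\P$ minimizes $f_B|_{S_0}$, the $S_0$-gradient $\partial_{Q_{ij}}f_B(\P) + \tfrac{B}{3}|\P|^2\delta_{ij}$ equals zero, so $\partial_{Q_{ij}}f_B(\P)$ is a multiple of $\delta_{ij}$, which contracts to zero with the trace-free $\nu$. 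Hence the middle expression equals $tH_\P(\nu,\nu) + O(t^2)$, which by the preceding positivity is comparable to $t$, and therefore, via the outer inequalities just established, comparable to $\sqrt{f_B(\Q)}$. Choosing $\delta_0$ smaller if needed to dominate the $O(t^2)$ term yields the constants $\alpha_2,\alpha_3$ in the middle inequalities.

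\textbf{Main obstacle.} The only substantive point is the uniform lower bound on $H_\P(\nu,\nu)$ for unit normals $\nu$ to $\NN$ in $S_0$; once this is in hand, the rest is elementary Taylor expansion with the error controlled by choosing $\delta_0$ small. This non-degeneracy is a standard fact about the Landau--de Gennes bulk potential when $A>0$ (so that $s_+>0$), established in e.g.~\cite{majumdarzarnescu,ejam2010}; it can also be verified directly by diagonalising $\Q$ and writing $f_B$ as a symmetric polynomial in the eigenvalues $\lambda_1,\lambda_2,\lambda_3$ constrained by $\sum\lambda_i=0$, then computing the Hessian at the uniaxial critical point in the three independent directions orthogonal to rotations. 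All constants depend only on $A,B,C$ as claimed, since they come from the Hessian analysis on the compact manifold $\NN$.
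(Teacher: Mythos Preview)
Your proposal is correct and follows essentially the same approach as the paper. The paper's proof is very brief---it cites \cite{Canevari2D, majumdarzarnescu} and simply states that the crucial point is the lower bound $\frac{\d^2}{\d t^2}\big|_{t=0}f_B(\Pi(\Q)+t\nu(\Q))\geq\alpha>0$, after which the lemma follows by Taylor-expanding $t\mapsto f_B(\Pi(\Q)+t\nu(\Q))$ about $t=0$; you have written out this Taylor expansion in detail, correctly handled the $\tfrac{B}{3}|\Q|^2\delta_{ij}$ term via the trace-freeness of~$\nu$, and identified the Hessian positivity on the normal bundle as the one nontrivial ingredient.
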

\begin{proof}
 The lemma follows by \cite[Remark~2.5 and Lemma~3.6]{Canevari2D}; 
 see also \cite[Lemma~6]{majumdarzarnescu}. The crucial point is to show that 
 \[
  \frac{\d^2}{\d t^2}_{|t=0} f_B(\Pi(\Q) + t\nu(\Q)) \geq \alpha 
 \]
 for some~$\alpha>0$ that only depends on~$A$, $B$, $C$;
 then, the lemma follows by Taylor-expanding the function $t\in\R\mapsto f_B(\Pi(\Q) + t\nu(\Q))$
 about~$t=0$.
\end{proof}

\begin{lemma} \label{lem:changyouwang1.5}
 Let~$\delta_0>0$ be as in Lemma~\ref{lem:changyouwang1}. Let~$B_R$ be a ball of radius~$R$.
 Let~$L> 0$ be fixed, and let~$\Q_L\in W^{1,\phi}(B_R, S_0)$ be a solution of 
 the Euler-Lagrange system in~$B_R$ such that
 \begin{equation} \label{hp:closedness}
  \dist(\Q_L(\x), \, \NN)\leq\delta_0 \qquad \textrm{for any } 
  \x\in B_R.
 \end{equation}
 Then, there holds
 \begin{equation} \label{eq:p4}
  \begin{split}
   \frac{1}{L} \int_{B_R} \abs{\Q_L - \Pi(\Q_L)} ~dV
   \leq M \left(R^d\phi(R^{-1}) + \int_{B_R} \phi(|\nabla\Q_L|) dV \right)
  \end{split}
 \end{equation}
 for a constant~$M>0$ that only depends on the characteristics of $\phi$ and on~$A$, $B$, $C$.
\end{lemma}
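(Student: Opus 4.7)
The plan is to test the Euler-Lagrange system satisfied by $\Q_L$,
\[
 \int_{B_R} \A(\nabla \Q_L) : \nabla\varphi \, dV = -\frac{1}{L} \int_{B_R} F(\Q_L) : \varphi \, dV,
\]
where $F(\Q)_{ij} := \partial f_B/\partial Q_{ij} + (B/3)|\Q|^2\delta_{ij}$, against a variation in the normal direction to~$\NN$. Writing $\xi := \Q_L - \Pi(\Q_L)$, I would take $\varphi = \eta\, \xi$ with $\eta \in C^1_c(B_R)$ a standard cutoff satisfying $\eta \equiv 1$ on~$B_{R/2}$ and $|\nabla\eta|\lesssim 1/R$. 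Since $\xi$ is symmetric and traceless, $\varphi$ is admissible, and its tracelessness also annihilates the Lagrange-multiplier contribution $(B/3)|\Q_L|^2\delta_{ij}$ in the right-hand side.

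The key input is Lemma~\ref{lem:changyouwang1}: pointwise, $\xi_{ij} = |\xi|\,\nu_{ij}(\Q_L)$ gives $F(\Q_L):\xi \geq \alpha_1 |\xi|^2$, so the right-hand side of the tested equation is bounded above by $-\tfrac{\alpha_1}{L}\int \eta|\xi|^2\,dV$. For the left-hand side, I would split $\nabla(\eta\xi) = \xi\otimes\nabla\eta + \eta\,\nabla\xi$ and use that $\Pi$ is smooth on a tubular neighborhood of~$\NN$, so that $|\nabla\xi| = |(I - D\Pi(\Q_L))\nabla\Q_L| \lesssim |\nabla\Q_L|$. An application of Young's inequality $st\leq \phi(s)+\phi^*(t)$ combined with the identity $\phi^*(\phi'(t))\sim\phi(t)$ (a consequence of~\eqref{hp:Delta2prime}) and the bound $|\xi|\leq\delta_0\lesssim 1$ gives
\[
 |\A(\nabla\Q_L) : (\xi\otimes\nabla\eta)| + |\eta\,\A(\nabla\Q_L) : \nabla\xi|
 \lesssim \phi(|\nabla\Q_L|) + \phi(1/R).
\]
Integrating and combining with the lower bound on the right-hand side yields the ``quadratic'' version of the sought estimate,
\[
 \frac{1}{L} \int_{B_{R/2}} |\xi|^2\, dV
 \lesssim \int_{B_R} \phi(|\nabla\Q_L|)\, dV + R^d\phi(1/R).
\]

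The main obstacle is upgrading this $L^2$-in-$|\xi|$ estimate to the $L^1$ bound claimed by the lemma: because $|\xi|\leq\delta_0$, an $L^2$-bound does \emph{not} imply an $L^1$-bound with constants independent of~$L$. The natural remedy is to repeat the argument with the regularized normal $\psi_\epsilon := \eta\, \xi/\sqrt{|\xi|^2+\epsilon^2}$, for which $|\psi_\epsilon|\leq 1$ so that the right-hand side converges to $\alpha_1 L^{-1}\int\eta|\xi|$ as $\epsilon\to 0$. The price is that $|\nabla\psi_\epsilon|$ is of order $|\nabla\Q_L|/\epsilon$ on the level set $\{|\xi|\leq \epsilon\}$, producing a singular contribution roughly of the form $\epsilon^{-1}\int_{\{|\xi|\leq\epsilon\}}\phi(|\nabla\Q_L|)$ that must be absorbed. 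One either optimizes $\epsilon$ against the energy $\int_{B_R}\phi(|\nabla\Q_L|)$, or appeals to a coarea-type argument to show that this level-set contribution vanishes in the limit; this is the delicate step that makes the claimed linear-in-$|\xi|$ bound strictly stronger than the quadratic one above.
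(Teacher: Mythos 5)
Your general setup matches the paper's: test the Euler--Lagrange system against a variation normal to $\NN$ with a cutoff, invoke Lemma~\ref{lem:changyouwang1} to bound the potential term from below, and control the elastic contribution with Young's inequality together with $\phi^*(\phi'(t))\sim\phi(t)$. You also correctly identify why testing against the \emph{unnormalized} vector $\eta\,\xi$, $\xi := \Q_L - \Pi(\Q_L)$, is insufficient: it yields an $L$-independent bound on $\frac1L\int|\xi|^2$, and since $|\xi|\leq\delta_0$ pointwise this does not imply the $L^1$ bound claimed. That part of your reasoning is sound.

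However, the proposal does not complete the argument: you explicitly leave the $L^2\to L^1$ upgrade --- testing against $\eta\,\xi/\sqrt{|\xi|^2+\epsilon^2}$ and controlling the $\epsilon^{-1}$ contribution --- as an undone ``delicate step,'' and that step is the entire content of the lemma. The paper tests directly with $\nu(\Q_L)\eta^2$, $\nu := \xi/|\xi|$ (no regularization), and controls the chain-rule term $\A(\nabla\Q_L):\nabla\bigl(\nu(\Q_L)\bigr)$ by a bound on $(\nabla\nu)(\Q)$ in the tubular neighbourhood of $\NN$. Your plan, by contrast, has no mechanism to control that term: neither optimizing over $\epsilon$ against $\int_{B_R}\phi(|\nabla\Q_L|)$ nor a coarea splitting gives a constant uniform in $L$ (and in $\epsilon$), because the potential contribution $\epsilon^{-1}\phi(|\nabla\Q_L|)$ on $\{|\xi|\lesssim\epsilon\}$ need not be small. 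The argument only closes if one recognizes the structure of the chain-rule term --- either it is bounded, which is what the paper asserts, or it carries a favourable sign (the analogue of $Q_{ij,k}\nu_{ij}Q_{\alpha\beta,k}\nu_{\alpha\beta}\geq 0$, used later for Lemma~\ref{lem:changyouwang2}) and so can simply be discarded rather than absorbed. Your proposal identifies neither mechanism, so the crux of the lemma remains unproved.
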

\begin{proof}
 As in~\cite[Lemma~2.2]{changyouwang}, we take a cut-off 
 $\eta\in C^\infty_{\mathrm{c}}(B_R)$ such that $\eta = 1$ on~$B_{R/2}$, $0\leq\eta\leq 1$ on~$B_R$, 
 $\abs{\nabla\eta}\lesssim 1/R$ and multiply the system by~$\nu(\Q_L)\eta^2$. 
 After integration by parts, and dropping the subscript~$L$, we get
 \begin{equation*}
  \frac{1}{L} \int_{B_R}
   \left( \frac{\partial f_B}{\partial Q_{ij}} + \frac{B}{3}|\Q|^2\delta_{ij}\right)\nu_{ij}(\Q)\eta^2~dV  =
   \underbrace{-\int_{B_R} \frac{\phi^\prime(|\nabla\Q|)}{|\nabla\Q|}\nabla\Q\cdot\nabla (\nu(\Q) \eta^2)}_{=:I}
 \end{equation*}
 Due to Lemma~\ref{lem:changyouwang1}, the left-hand side of this formula
 bounds the left-hand side of~\eqref{eq:p4} from above; therefore, it suffices to bound~$I$.
 By expanding~$\nabla (\nu(\Q) \eta^2)$ with the chain rule
 and using the fact that $|(\nabla\nu)(\Q)|\leq C$ for~$\dist(\Q, \NN)\leq\delta_0$, we obtain
 \begin{align*}
  \abs{I} \lesssim \int_{B_R} \left(\phi^\prime(|\nabla\Q|)|\nabla\Q| 
  + \frac{1}{R} \phi^\prime(\abs{\nabla\Q}) \right)
 \end{align*}
 For the first term in the right-hand side, we use that
 $\phi^\prime(t) t \sim\phi(t)$.
 For the second term, we apply the Young inequality~\eqref{Young} and Lemma~\ref{lem:Orliczproperties}:
 \begin{equation} \label{young}
  \frac{1}{R}\phi^\prime(\abs{\nabla\Q}) \leq
  \phi(R^{-1}) + \phi^*(\phi^\prime(\abs{\nabla\Q}))
  \lesssim \phi(R^{-1}) + \phi(\abs{\nabla\Q})
 \end{equation}
 where $\phi^*$ is defined in Section \ref{sect:notation}.
 Hence, the lemma follows.
\end{proof}

\begin{lemma}
\label{lem:changyouwang2}
 Let $\Q_L$ be a critical point of the modified LdG energy
 $I_{mod}$ in \eqref{eq:2}, on the unit ball~$B_1$, for a fixed~$L>0$. We assume that 
 \begin{equation} \label{closedness-bis}
  \dist\left(\Q_L, \NN \right) \leq \delta_0 \qquad \textrm{for all } x\in B_1
 \end{equation}
 where $\delta_0$ is sufficiently small for \eqref{eq:p3} to hold, and that
 \begin{equation} \label{Lipschitz}
  \Lambda := \sup_{B_1} \phi(\abs{\nabla \Q_L}) < +\infty.
 \end{equation}
 Then, for any integer~$q\geq 1$ and any~$0<\theta<1$, there exist
 $\delta_q \in \left[\theta^{q}, \, 1\right]$ and a positive constant $C_q>0$ 
 (depending on~$\Lambda$, $\theta$, $q$, $f_B$ and the characteristics of~$\phi$
 but not on~$L$) such that
 \begin{equation}
 \label{eq:p6}
 \int_{B_{\delta_q}} \left( \frac{1}{L} \abs{\Q_L-\Pi(\Q_L)} \right)^q dV \leq C_q
 \end{equation}
\end{lemma}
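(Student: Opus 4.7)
The plan is to prove the estimate by induction on $q$, working with the scalar quantity $u := |\Q_L - \Pi(\Q_L)|/L$. For the base case $q = 1$, Lemma~\ref{lem:changyouwang1.5} applied on the unit ball yields $\int_{B_{\delta_1}} u\,\d V \leq C_1$ for some $\delta_1\in[\theta, 1]$ (adjusting the cut-off in that lemma's proof to equal $1$ on $B_{\delta_1}$ if $\theta > 1/2$), since the Lipschitz hypothesis bounds $\int_{B_1}\phi(|\nabla\Q_L|)\leq \Lambda\,|B_1|$ while $\phi(1)$ is a universal constant.

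For the inductive step $q\to q+1$, set $\delta_{q+1} := \theta\delta_q \in [\theta^{q+1}, 1]$ and choose $\eta\in C^\infty_{\mathrm{c}}(B_{\delta_q})$ with $\eta\equiv 1$ on $B_{\delta_{q+1}}$ and $\|\nabla\eta\|_\infty\lesssim((1-\theta)\delta_q)^{-1}$. Writing $W := \Q_L - \Pi(\Q_L)$, one tests the Euler--Lagrange system~\eqref{eq:EL2} against the matrix-valued function
\[
 \Psi \, := \, \nu(\Q_L)\,\eta^{2(q+1)}\,u^q \, = \, W\,|W|^{q-1}\,\eta^{2(q+1)}\,L^{-q},
\]
which is traceless, Lipschitz, and compactly supported in $B_{\delta_q}$ (the factor $W|W|^{q-1}$ extends continuously by zero across $\NN$, so $\Psi$ is well-defined for $q \geq 1$). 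Tracelessness of $\Psi$ makes the $\tfrac{B}{3}|\Q_L|^2 I$ term in the right-hand side of~\eqref{eq:EL2} vanish in the pairing, and Lemma~\ref{lem:changyouwang1} then supplies
\[
 \frac{1}{L}\int \Bigl(\nabla_{\Q} f_B(\Q_L) + \tfrac{B}{3}|\Q_L|^2 I\Bigr):\Psi\,\d V \, \geq \, \alpha_1\int u^{q+1}\,\eta^{2(q+1)}\,\d V.
\]
After integration by parts the resulting inequality reads $\alpha_1\int u^{q+1}\eta^{2(q+1)}\leq -T_1 - T_2 - T_3$, where $T_1$, $T_2$, $T_3$ come from $\nabla\Psi$ acting on $\nu(\Q_L)$, $\eta^{2(q+1)}$, and $u^q$ respectively. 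Using the Lipschitz bounds on $|\nabla\Q_L|$ and $\phi'(|\nabla\Q_L|)$, together with $|\nabla\nu(\Q)|\lesssim|\nabla\Q|$ and the inductive hypothesis $\int_{B_{\delta_q}}u^q\leq C_q$, one gets $|T_1|+|T_2|\leq C'_q$ for a constant depending on $q$, $\theta$, $\Lambda$, the characteristics of $\phi$ and $A,B,C$ but \emph{not} on $L$.

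The crux is the sign of $T_3$. Because $\Pi$ takes values in the smooth manifold~$\NN$, its differential $\Pi'(\Q)$ has image in $T_{\Pi(\Q)}\NN$, to which $\nu(\Q)$ is normal by construction; this forces $\nu_{ij}(\Q_L)\,\partial_k\Pi(\Q_L)_{ij}\equiv 0$ and yields the \emph{exact} identity $\partial_k u = (\nu(\Q_L):\partial_k\Q_L)/L$. Substituting produces
\[
 T_3 \, = \, q\int \frac{\phi'(|\nabla\Q_L|)}{|\nabla\Q_L|\,L}\sum_k \bigl(\nu(\Q_L):\partial_k\Q_L\bigr)^2\,\eta^{2(q+1)}\,u^{q-1}\,\d V \, \geq \, 0,
\]
so that $-T_3\leq 0$ may be discarded in the upper bound, yielding $\int_{B_{\delta_{q+1}}}u^{q+1}\leq C_{q+1}$ with $C_{q+1}$ independent of $L$ and closing the induction. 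The main obstacle is precisely this sign identity: the naive estimate $|\nabla u|\lesssim|\nabla\Q_L|/L$ would inject a factor $L^{-1}$ at every step of the iteration and spoil the $L$-independent bound, so the geometric cancellation $\nu(\Q):\nabla\Pi(\Q)\equiv 0$ (the orthogonality of normal and tangent spaces of~$\NN$) is what makes the induction succeed.
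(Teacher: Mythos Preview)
Your argument is correct and follows the same inductive strategy and key sign identity as the paper. The one technical difference is in how the boundary is handled: the paper does not use a cut-off but instead invokes Fubini's theorem to select a radius $\delta_{q+1}\in(\theta\delta_q,\delta_q)$ on which $\int_{\partial B_{\delta_{q+1}}} u^q\,\d\sigma$ is controlled by the inductive bound, then integrates by parts over the solid ball $B_{\delta_{q+1}}$ and estimates the resulting boundary integral directly. Your cut-off approach trades that boundary term for the $\nabla\eta$ term $T_2$, which is absorbed by the same inductive hypothesis, and has the minor advantage of giving a deterministic value $\delta_{q+1}=\theta\delta_q$ rather than one implicitly depending on the map~$\Q_L$. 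Both routes hinge on exactly the same geometric cancellation $\nu(\Q):\partial_k\Pi(\Q)=0$ that you emphasize.
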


\begin{proof}
From Lemma \ref{lem:changyouwang1.5} and the bound~\eqref{Lipschitz},
this is true for $q=1$ and~$\delta_1 := 1$. 
By induction, we can assume that \eqref{eq:p6} is true for an integer $q\geq 2$. 
Then by Fubini's theorem, there exists $\delta_{q+1}\in \left(\theta\delta_{q}, \delta_q \right)$ such that 
\begin{equation}
\label{eq:p7}
 \int_{\partial B_{\delta_{q+1}}} \left(\frac{1}{L}
 \abs{\Q_L - \Pi(\Q_L)} \right)^q dV \leq C_{q+1}.
\end{equation}
We multiply both sides of the Euler-Lagrange equations \eqref{eq:EL} by 
$L^{-q}\left|\Q - \Pi(\Q)\right|^q \nu_{ij}(\Q)$ to get at the right-hand side
(dropping the subscript $L$ for brevity)
\begin{equation} \label{eq:p11}
 \begin{split}
  & \frac{1}{L^{q+1}} \int_{B_{\delta_{q+1}}} \left(\frac{\partial f_B}{\partial Q_{ij}}(\Q) 
       + \frac{B}{3}|\Q|^2\delta_{ij} \right) \nu_{ij}(\Q) \left| \Q - \Pi(\Q)\right|^q \\ 
  & \geq \frac{1}{L^{q+1}} \int_{B_{\delta_{q+1}}} \left|\Q - \Pi(\Q)\right|^{q+1} \\
 \end{split}
\end{equation}
where we have used the inequality \eqref{eq:p3}. 
Similarly, at the left-hand side we have 
\begin{equation} \label{eq:p12}
 \begin{split}
  & \int_{B_{\delta_{q+1}}} \frac{\partial}{\partial \x_k}\left[ \frac{2}{p}\psi^\prime\left(|\nabla \Q|^2\right)\Q_{ij,k} \right]
  \frac{1}{L^q}\left| \Q - \Pi(\Q)\right|^q \nu_{ij}(\Q) dV =  \\
  & \int_{\partial B_{\delta_{q+1}} }\frac{2}{p} \psi^\prime\left(|\nabla \Q|^2\right)\left(\frac{\Q_{ij,k}\x_k}{\delta_{q+1}}\right) 
  \frac{1}{L^q}\left| \Q - \Pi(\Q)\right|^q \nu_{ij}(\Q) d\sigma  \\ 
  &- \int_{B_{\delta_{q+1}}} \frac{2}{p}\psi^\prime\left(|\nabla \Q|^2\right)
  \Q_{ij,k}\frac{\partial}{\partial \x_k}\left\{\frac{1}{L^q}\left| \Q - \Pi(\Q)\right|^q \nu_{ij}(\Q) \right\}dV.
 \end{split}
\end{equation}
We estimate each integral on the right-hand side separately. 
The boundary integral can be estimated easily as shown below using the inequality 
\eqref{eq:p7}:
\begin{equation}\label{eq:p13}
 \begin{split}
  & \left|\int_{\partial B_{\delta_{q+1}} }\frac{2}{p} \psi^\prime\left(|\nabla \Q|^2\right)
  \left(\frac{\Q_{ij,k}\x_k}{\delta_{q+1}}\right) \frac{1}{L^q}\left| \Q - \Pi(\Q)\right|^q \nu_{ij}(\Q) d\sigma \right| \\
  & \leq \bar{c} \max_{B_1}\psi^\prime(|\nabla \Q|^2)\abs{\nabla\Q} C_{q+1}
  \leq \bar{c} \max_{B_1}\phi^\prime(|\nabla \Q|) C_{q+1}
 \end{split}
\end{equation}
for a positive constant $\bar{c}$ independent of $L$; $C_{q+1}$ has been defined
in~\eqref{eq:p7}. The right-hand side is bounded due to~\eqref{Lipschitz}. We then consider
\begin{equation} \label{eq:p14}
 \begin{split}
  & - \int_{B_{\delta_{q+1}}} \frac{2}{p}\psi^\prime\left(|\nabla \Q|^2\right) \Q_{ij,k}\frac{\partial}{\partial \x_k}\left\{\frac{1}{L^q}\left| \Q - \Pi(\Q)\right|^q \nu_{ij}(\Q) \right\}dV = \\
  & -\frac{2}{p}\int_{B_{\delta_{q+1}}}\psi^\prime\left(|\nabla \Q|^2\right) \Q_{ij,k} \frac{\partial \nu_{ij}(\Q)}{\partial \x_k}\frac{1}{L^q}\left| \Q - \Pi(\Q)\right|^q dV - \\
  & - \frac{2}{p}\int_{B_{\delta_{q+1}}}\psi^\prime\left(|\nabla \Q|^2\right) \Q_{ij,k} \nu_{ij}(\Q)\frac{q}{L^q}\left| \Q - \Pi(\Q)\right|^{q-1} \frac{\partial}{\partial \x_k}\left| \Q - \Pi(\Q)\right|dV.
 \end{split}
\end{equation}
It is relatively straightforward to see that
\begin{equation}
\label{eq:p15}
\left| \int_{B_{\delta_{q+1}}} \frac{2}{p}
\psi^\prime\left(|\nabla \Q|^2\right) \Q_{ij,k} \frac{\partial \nu_{ij}(\Q)}{\partial \x_k}\frac{1}{L^q}\left| \Q - \Pi(\Q)\right|^q dV \right|
 \lesssim \Lambda C_q 
\end{equation}
where we have used the hypotheses \eqref{Lipschitz} and~\eqref{eq:p7}. It remains to note that \cite{changyouwang}
\[
 \Q_{ij,k} \nu_{ij} \frac{\partial}{\partial \x_k}\left| \Q - \Pi(\Q)\right| 
 = \Q_{ij,k} \nu_{ij} \Q_{\alpha\beta, k}\nu_{\alpha\beta} \geq 0
\]
so that
\[
 \frac{2}{p}\int_{B_{\delta_{q+1}}}\psi^\prime\left(|\nabla \Q|^2\right) \Q_{ij,q} \nu_{ij}(\Q)\frac{q}{L^q}\left| \Q - \Pi(\Q)\right|^{q-1} \frac{\partial}{\partial \x_k}\left| \Q - \Pi(\Q)\right|dV \geq 0
\]
and the conclusion of the lemma follows. $\Box$
\end{proof}

We can now turn to the proof of Proposition~\ref{prop:uniform1}.

\begin{proof}[of Proposition~\ref{prop:uniform1}] 
 The proof follows from Lemma $7$ of \cite{majumdarzarnescu} 
and Lemma $2.3$ of \cite{changyouwang}. 
There is an additional step towards the end of the proof which was not needed/considered in either \cite{majumdarzarnescu} or \cite{changyouwang}.

We can take $\x^* = \mathbf{0}$ without loss of generality. 
Choose $r_1^L\in (r/2, \, r)$ and $\x_L \in B_{r_1^L}$ such that
\begin{equation}
\label{eq:e1bis}
\max_{r/2\leq s\leq r} (r - s)^p \max_{B_s} e_{L}(\Q_{L}) = (r - r_1^L)^p \max_{B_{r_1^L}} e_L(\Q_{L}) 
\end{equation}
and
\begin{equation}
\label{eq:e2bis}
\max_{B_{r_1^L}} e_L(\Q_{L})  = e_L (\Q_{L})(\x_L) =: K_L^p.
\end{equation}
Such $r_1^L$ and $\x_L$ exist, because $\Q_{L}\in C^{1,\alpha}_{\mathrm{loc}}(\Omega)$ 
(Proposition~\ref{prop:C1,a}) and hence $e_{L}(\Q_{L})$ is continuous.
Since $\frac{r}{2} < \frac{r + r_1^L}{2} < r$, by definition of~$r^L_1$ we have
\begin{equation*}
\label{eq:e3}
\left( r - \frac{r + r_1^L}{2} \right)^p \max_{B_{(r + r_1^L)/2}} e_{L}(\Q_{L}) \leq (r - r_1^L )^p K_L^p
\end{equation*}
so that
\begin{equation}
\label{eq:e4}
\max_{B_{(r + r_1^L)/2}} e_{L}(\Q_{L}) \leq 2^p K_L^p.
\end{equation}

Next, we set $r_2^L := \frac{r - r_1^L}{2}K_L$ and $\bar{L} :=  L K_L^p$,
with the scaled map
\begin{equation*}
\label{eq:e5}
\v_{\bar{L}}(\x) := \Q_{L}\left( \x_L + \frac{\x}{K_L} \right) \quad 
\textrm{for } \x\in B_{r_2^L},
\end{equation*}
the scaled elastic modulus
\begin{equation}
 \tilde{\phi}_{K_L}(t) := K_L^{-p} \phi(K_L t) \qquad\textrm{for } t\geq 0
\end{equation}
and the scaled energy density
\begin{equation*}
\bar{e}_{\bar{L}}(\v_{\bar{L}}) := \tilde{\phi}_{K_L}(|\nabla\v_{\bar{L}}|) + \bar{L}^{-1} f_B(\v_{\bar{L}}).
\end{equation*}
We compute that $|\nabla\Q_L| = K_L|\nabla\v_{\bar{L}}|$ and 
$e_{L}(\Q_{L}) = K_L^{p}\bar{e}_{\bar{L}}(\v_{\bar{L}})$, so 
we have from \eqref{eq:e2bis} and \eqref{eq:e4} that
\begin{equation}
\label{eq:e6}
\max_{B_{r^L_2}} \bar{e}_{\bar{L}}\left( \v_{\bar{L}}\right) \leq 2^p,
\qquad \bar{e}_{\bar{L}}(\v_{\bar{L}})(\mathbf{0}) = 1.
\end{equation}
Further, from the Euler-Lagrange equations \eqref{eq:EL}, we have that
$\v_{\bar{L}}$ is a solution of
\begin{equation}\label{eq:e7}
\begin{split}
\div\left(\frac{\tilde{\phi}_{K_L}(\abs{\nabla\v_{\bar{L}}})}{\abs{\nabla \v_{\bar{L}}}}\nabla \v_{\bar{L}} \right) 
= \frac{1}{\bar{L}}\left[ -A \v_{\bar{L}} - B \left(\v_{\bar{L}} \v_{\bar{L}} - |\v_{\bar{L}}|^2 \frac{\mathbf{I}}{3} \right) + C|\v_{\bar{L}}|^2 \v_{\bar{L}} \right].
\end{split}
\end{equation}
(in fact, $\v_{\bar{L}}$ is a minimizer of the associated functional,
which is obtained from~$I_{LdG}$ by scaling).

We now consider two possibilities.
If $r_2^L \leq 1$ then, 
by choosing~$s = r/2$ in Eq.~\eqref{eq:e1bis}, we deduce that
\begin{equation}
\label{eq:e8}
\left(\frac{r}{2} \right)^p \max_{B_{r_1^L}} e_L(\Q_{L})
\leq \left( r - r_1^L \right)^p K_L \stackrel{r^L_2\leq 1}{\leq} 2^p
\end{equation}
verifying the pointwise bound on the energy density in (\ref{eq:u3}).
The second case is $r_2^L > 1$. 
We claim that if $r_2^L>1$, then there exists a positive constant $C_0$ independent of $L$ such that
\begin{equation}
\label{eq:e9}
C_0 \leq \int_{B_1} \bar{e}_{\bar{L}} (\v_{\bar{L}}).
\end{equation}
Assuming that \eqref{eq:e9} holds, we will get the required contradiction 
with \eqref{eq:u2}, by appealing to the monotonicity of the normalised energy in Lemma \ref{lem:3}. 
Indeed, since $r_2^L > 1$ by assumption, we have~$K_L^{-1} < r$ and from~\eqref{eq:e9} we deduce, by scaling,
\begin{equation*}
 \begin{split}
 C_0 \leq K_L^{-p+d}\int_{B_{K_L^{-1}}} e_{L} (\Q_L) &\stackrel{\eqref{eq:G1}}{\leq}
 r^{p-d}\int_{B_r} e_{L} (\Q_L) + Mr^p 
 \stackrel{\eqref{eq:u2}}{\leq} \varepsilon + Mr_0^p
 \end{split}
\end{equation*}
(we have used that 
$r\leq r_0$). Thus, we obtain a contradiction if we choose~$\varepsilon$, $r_0$ small enough.

The rest of the proof is dedicated to proving the inequality (\ref{eq:e9}).
By Equation~\eqref{eq:e7} and Theorem~\ref{thm:subsol}, we know that 
$w_{\bar{L}} := \tilde{\phi}_{K_L}(|\nabla\v_{\bar{L}}|)$ is a subsolution of an elliptic problem, namely
\begin{equation} \label{subharmonic}
 -\div\left(\mathbf{G}_{\bar{L}} \nabla w_{\bar{L}}\right) \leq -\frac{1}{\bar{L}} 
 \frac{ \partial^2 f_B(\v_{\bar{L}})}{\partial Q_{ij}\partial Q_{hk}} 
 \partial_\ell\v_{\bar{L}ij}\partial_\ell\v_{\bar{L}hk} \quad \textrm{on } B_1
\end{equation}
where the tensor field~$\mathbf{G}_{\bar{L}} = \mathbf{G}_{\bar{L}}(\nabla\v_{\bar{L}})$
is bounded and elliptic:
\begin{equation} \label{ellipticity}
 \alpha_0\abs{\xi}^2 \leq \mathbf{G}_{\bar{L}}\xi\cdot\xi \leq \alpha_1\abs{\xi}^2
 \qquad \textrm{for any } \xi\in\R^d.
\end{equation}
Although the function~$\tilde{\phi}_{K_L}$ does depend on~$K_L$,
the constants~$\alpha_0$, $\alpha_1$ are independent of~$K_L$. Indeed, by
Proposition~\ref{prop:subsol} we know that~$\alpha_0$, $\alpha_1$ only depend on the ratio
\[
 \frac{\tilde{\phi}^{\prime\prime}_{K_L}(t)t - \tilde{\phi}^\prime_{K_L}(t) }{\tilde{\phi}^\prime_{K_L}(t)}
 = \frac{\phi^{\prime\prime}(K_Lt)K_Lt - \phi^\prime(K_Lt)}{\phi^\prime(K_Lt)}
\]
which is bounded form above and below, independently of~$K_L$, 
thanks to Assumption~\eqref{hp:Delta2prime}. Now, let~$q>d$ be fixed, and 
let us set~$x^+ := \min\{x, \, 0\}$ for~$x\in\R$. We claim that
\begin{gather} 
 \int_{B_{1/2}}\left(\left(-\frac{1}{\bar{L}} 
 \frac{ \partial^2 f_B(\v_{\bar{L}})}{\partial Q_{ij}\partial Q_{hk}} \label{Lq1}
 \partial_\ell\v_{\bar{L}ij}\partial_\ell\v_{\bar{L}hk}\right)^+\right)^q \leq C_q \\
 \int_{B_{1/2}} \left(\frac{1}{\bar{L}} \abs{(\nabla_{\Q}f_B)(\v_{\bar{L}})}\right)^q \leq C_q \label{Lq2}
\end{gather}
for some~$C_q$ independent on~$L$, $K_L$.
Let~$\delta_0>0$ be given by Lemma~\ref{lem:changyouwang1}, and let
\begin{equation} \label{eq:e10}
 \gamma := \inf\left\{f_B(\Q)\colon\Q\in S_0, \ \dist(\Q, \, \NN)\geq\delta_0\right\} \, .
\end{equation}
Since~$f_B(\Q)>0$ if~$\Q\notin\NN$ and $f_B(\Q)\to+\infty$ as~$|\Q|\to+\infty$,
we have that~$\gamma>0$. We now consider two cases separately.

\noindent \textbf{Proof of~\eqref{Lq1}, \eqref{Lq2} --- Case I:} $\bar{L}\geq 2^{-p}\gamma$.
This is straightforward because in this case, the left-hand sides of~\eqref{Lq1}, \eqref{Lq2}
are uniformly bounded in terms of~$\gamma$, $A$, $B$, and~$C$ (since we have the maximum principle, Lemma~\ref{lemma:max_principle}).

\noindent \textbf{Proof of~\eqref{Lq1}, \eqref{Lq2} --- Case II:} $\bar{L}<2^{-p}\gamma$.
From \eqref{eq:e6}, the energy densities are uniformly bounded and in particular
\[
 f_B( \v_{\bar{L}}) \leq 2^p\bar{L} < \gamma \quad \textrm{on } B_1\subseteq B_{r^L_2},
\]
so, because of our choice of~$\gamma$ in~\eqref{eq:e10}, we have 
\[
 \dist(\v_{\bar{L}}, \, \NN)\leq\delta_0 \qquad\textrm{on } B_1. 
\]
Then, we can consider the projection~$\Pi(\v_{\bar L})$ onto~$\NN$.
Since the potential~$f_B$ is smooth, and in particular the second derivatives of~$f_B$
are Lipschitz-continuous, we have
\[
 - \frac{ \partial^2 f_B(\v_{\bar{L}})}{\partial Q_{ij}\partial Q_{hk}} 
 \partial_\ell\v_{\bar{L}ij}\partial_\ell\v_{\bar{L}hk} 
 \leq - \frac{ \partial^2 f_B(\Pi(\v_{\bar{L}}))}{\partial Q_{ij}\partial Q_{hk}} 
 \partial_\ell\v_{\bar{L}ij}\partial_\ell\v_{\bar{L}hk} + M\abs{\v_{\bar{L}} - \Pi(\v_{\bar{L}})}\abs{\nabla\v_{\bar{L}}}^2
\]
for some constant~$M$ that only depends on the coefficients of~$f_B$.
Since $\Pi(\v_{\bar{L}})$ belongs to the minimizing manifold of~$f_B$,
the Hessian matrix of~$f_B$ at~$\Pi(\v_{\bar{L}})$ is positive semi-definite 
and hence the first term in the right-hand side is non-positive. 
Recalling that $|\nabla\v_{\bar{L}}|$ is bounded by~\eqref{eq:e6}, we obtain
\[
 \begin{split}
  &\int_{B_{1/2}}\left(\left(-\frac{1}{\bar{L}} 
  \frac{ \partial^2 f_B(\v_{\bar{L}})}{\partial Q_{ij}\partial Q_{hk}} 
  \partial_\ell\v_{\bar{L}ij}\partial_\ell\v_{\bar{L}hk}\right)^+\right)^q \, \d x \\
  &\quad \leq \frac{M^q}{L^q}\int_{B_{1/2}}\abs{\v_{\bar{L}} - \Pi(\v_{\bar{L}})}^q \abs{\nabla\v_{\bar{L}}}^{2q}
  \stackrel{\eqref{eq:e6}}{\lesssim} \frac{M^q}{L^q}\int_{B_{1/2}}\abs{\v_{\bar{L}} - \Pi(\v_{\bar{L}})}^q
 \end{split}
\]
and the right-hand side is bounded by Lemma~\ref{lem:changyouwang2}, so~\eqref{Lq1} follows.
The proof of~\eqref{Lq2} follows by a similar reasoning, using the fact that $\nabla_{\Q}f_B(\Pi(\Q)) = 0$ because~$\Pi(\Q)\in\NN$ is a minimum point of~$f_B$.

\noindent \textbf{Proof of~\eqref{eq:e9}:}
Because of~\eqref{eq:e6}, we must either have $\bar{L}^{-1} f_B(\v_{\bar{L}}(\mathbf{0}))\geq 1/2$
or $w_{\bar{L}}(\mathbf{0}) = \tilde{\phi}_{K_L}(|\nabla\v_{\bar{L}}(\mathbf{0})|)\geq 1/2$.
In case $\bar{L}^{-1} f_B(\v_{\bar{L}}(\mathbf{0}))\geq 1/2$, we compute using the chain rule
\begin{equation} \label{uniform-W1q}
 \abs{\nabla_{\x}\left(\bar{L}^{-1} f_B(\v_{\bar{L}})\right)}
 \leq \bar{L}^{-1} \abs{(\nabla_{\Q}f_B)(\v_{\bar{L}})}\abs{\nabla\v_{\bar{L}}}.
\end{equation}
Now, $\bar{L}^{-1}(\nabla_{\Q}f_B)(\v_{\bar{L}})$ is uniformly bounded in~$L^q(B_{1/2})$
due to~\eqref{Lq2}, while $\nabla\v_{\bar{L}}$ is uniformly bounded in~$L^\infty(B_1)$ 
due to~\eqref{eq:e6}. It follows that $\bar{L}^{-1} f_B(\v_{\bar{L}})$
is uniformly bounded in~$W^{1,q}(B_{1/2})$ with~$q>d$ and hence, by Sobolev embedding,
$\bar{L}^{-1} f_B(\v_{\bar{L}})$ is $(1-d/q)$-H\"older continuous on~$B_{1/2}$, with uniform bound on
the H\"older norm. Therefore, \eqref{eq:e9} follows immediately if we have
$\bar{L}^{-1} f_B(\v_{\bar{L}}(\mathbf{0}))\geq 1/2$.

Finally, it only remains to consider the case $w_{\bar{L}}(\mathrm{0})\geq 1/2$.
In this case, we use the elliptic inequality~\eqref{subharmonic}, together with the
ellipticity bounds~\eqref{ellipticity} and the bound on the right hand side~\eqref{Lq1}.
By applying the theory for elliptic equations (see e.g.~\cite[Theorem~8.3]{GilbargTrudinger}),
we deduce that
\begin{equation} \label{GT1}
 \frac{1}{2} \leq w_{\bar{L}}(\mathbf{0}) \leq \sup_{B_{R/4}} w_{\bar{L}} \leq 
 C_1 R^{-d/q} \|w_{\bar{L}}\|_{L^q(B_R)} + C_2R^{2(1-d/q)}
\end{equation}
for any~$R\in (0, \, 1/2)$ and for some constants~$C_1$, $C_2$
that are independent of~$L$. We choose
\[
 R := \min\left\{\frac{1}{2}, \, 
 \left(\frac{1}{4C_2}\right)^{\frac{1}{2(1-d/q)}}\right\} \!, 
\]
so that $C_2R^{2(1-d/q)}\leq 1/4$ and the second term in the right-hand side of~\eqref{GT1} can be absorbed into 
the left-hand side. Then, using the interpolation inequality, we obtain
\[
 \begin{split}
  \frac{1}{4} \leq C_1 R^{-d/q}\|w_{\bar{L}}\|_{L^q(B_R)} 
  &\leq C_1 R^{-d/q} \|w_{\bar{L}}\|^{1/q}_{L^1(B_R)} \|w_{\bar{L}}\|^{1-1/q}_{L^\infty(B_R)} \\
  &\stackrel{\eqref{eq:e6}}{\leq} C_1 R^{-d/q} 2^{p-p/q} \|w_{\bar{L}}\|^{1/q}_{L^1(B_R)},
 \end{split}
\]
whence~\eqref{eq:e9} follows.
\end{proof}

\subsection{Uniform $C^{1,\alpha}$ estimates 
and the proof of Theorem~\ref{th:convergence}}

Proposition~\ref{prop:uniform1} provides a uniform bound, independent of~$L$, 
in the regions where the energy is small (i.e., away from the singularities of the 
limiting harmonic map). 
In this section, we deduce a uniform $C^{1,\alpha}$ bound on~$\Q_L$ 
from the uniform bound on~$e_L(\Q_L)$; this will allow us to 
complete the proof of Theorem~\ref{th:convergence}.
To this end, given a ball $B(\x_0, R)\subset \Omega$ 
and a minimizer~$\Q_L$ of our functional, 
we consider again the $\phi$-harmonic replacement of $\Q_L$ inside~$B(\x_0, R)$,
i.e. the unique solution~$\P$ of 
\begin{equation} \label{frozen-bis}
 \div\left(\frac{\phi^\prime(|\nabla \P|)}{|\nabla\P|}\nabla\P\right) = 0 \quad 
 \textrm{in } B(x_0, R), \qquad \P = \bar{\Q}
 \quad \textrm{on } \partial  B(\x_0, R).
\end{equation}

\begin{lemma} \label{lemma:excess-difference}
 Let $\Q_L$ be a minimizer of the modified LdG energy
 $I_{mod}$ in \eqref{eq:2}, on the ball~$B(\x_0, 2R)$, 
 for a fixed (but arbitrary)~$L>0$. We assume that 
 \begin{equation} \label{density-bound}
  \Lambda := \sup_{B(\x_0, 2R)}
  \left(\phi(\abs{\nabla \Q_L}) + \frac{1}{L} f_B(\Q_L) \right)< +\infty.
 \end{equation}
 Let $\P$ be the solution of~\eqref{frozen-bis}.
 Then, 
 there exists a positive constant~$C_{\Lambda}$ 
 (depending on~$\Lambda$, but not on~$L$) such that
 \begin{equation*}
 \mI{B(\x_0, R)}|\V(\nabla \Q_L)-\V(\nabla \P)|^2 dx \leq C_{\Lambda} R. 
\end{equation*}
\end{lemma}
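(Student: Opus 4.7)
The plan is to mirror the strategy of Proposition~\ref{lemma:frozen_excess}: take the difference of the Euler--Lagrange system for $\Q_L$ and the $\phi$-harmonic system~\eqref{frozen-bis} for $\P$, then test against $\Q_L - \P \in W^{1,\phi}_0(B(\x_0, R); S_0)$. Since $\tr(\Q_L - \P) \equiv 0$, the isotropic source term $\frac{B}{3L}|\Q_L|^2 \mathbf{I}$ drops out, and the equivalence~\eqref{eq:equivalence} shows that the left-hand side is comparable to $\int_{B(\x_0, R)}|\V(\nabla\Q_L) - \V(\nabla\P)|^2\,dx$. The task reduces to bounding $\frac{1}{L}\int (\nabla_\Q f_B)(\Q_L)\cdot(\Q_L - \P)\,dx$ by a constant times $R^{d+1}$; dividing by $|B(\x_0, R)| \sim R^d$ will then yield the claim.

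I will extract from the density hypothesis the following pointwise and integral estimates. Coercivity of $\phi$ turns $\phi(|\nabla\Q_L|)\leq\Lambda$ into a uniform Lipschitz bound $|\nabla\Q_L|\leq C_\Lambda$ on $B(\x_0, 2R)$; by Theorem~\ref{generalgrowth} together with the minimality of $\P$ and the boundary condition, a similar bound holds for $\nabla\P$ on $B(\x_0, R)$. Since $\Q_L = \P$ on $\partial B(\x_0, R)$, integrating the gradient gives the crucial boundary-layer estimate $\|\Q_L - \P\|_{L^\infty(B(\x_0, R))} \leq C_\Lambda R$. In turn, $f_B(\Q_L)\leq L\Lambda$ forces $\Q_L$ into the $\delta_0$-tubular neighborhood of $\NN$ for $L$ small enough; since $\NN$ is the minimum set of $f_B$, one has $\nabla_\Q f_B \equiv 0$ on $\NN$, so Taylor expansion combined with Lemma~\ref{lem:changyouwang1} yields $|(\nabla_\Q f_B)(\Q_L)| \lesssim |\Q_L - \Pi(\Q_L)|$.

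The main obstacle, which distinguishes this lemma from Proposition~\ref{lemma:frozen_excess}, is eliminating the $1/L$ prefactor in front of the right-hand side. A naive pointwise bound $|\Q_L - \Pi(\Q_L)|\lesssim \sqrt{L\Lambda}$ only furnishes one power of $L^{1/2}$ and leaves a residual $L^{-1/2}$ in the final estimate. To overcome this I appeal to the higher-integrability bound from Lemma~\ref{lem:changyouwang2} (after rescaling so that its hypotheses on the unit ball are met): for every sufficiently large integer $q$ there is a constant $C_q$ independent of $L$ such that $\int_{B(\x_0, R)}|\Q_L - \Pi(\Q_L)|^q\,dx \leq C_q L^q R^d$. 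Combining these ingredients via H\"older's inequality with conjugate exponents $(q, q')$ yields
\[
 \frac{1}{L}\int_{B(\x_0, R)} |(\nabla_\Q f_B)(\Q_L)|\,|\Q_L - \P|\,dx \leq \frac{C}{L}\, \|\Q_L - \Pi(\Q_L)\|_{L^q}\|\Q_L - \P\|_{L^{q'}} \leq \frac{C}{L}\cdot C_q L R^{d/q}\cdot C_\Lambda R^{1 + d/q'} = C_{\Lambda, q} R^{d+1}.
\]
The factor of $L$ coming from the higher integrability of $|\Q_L - \Pi(\Q_L)|$ exactly cancels the $1/L$ from the Euler--Lagrange system, which is the crux of the estimate. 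Dividing by $|B(\x_0, R)|$ produces the claimed bound $\mI{B(\x_0, R)}|\V(\nabla\Q_L)-\V(\nabla\P)|^2\,dx \leq C_\Lambda R$.
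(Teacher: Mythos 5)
Your proposal follows essentially the same route as the paper: write the difference of the two Euler--Lagrange systems, test against $\Q_L - \P$, use~\eqref{eq:equivalence} for the left-hand side, and cancel the $1/L$ via the uniform $L^q$-bound for $L^{-1}|\Q_L-\Pi(\Q_L)|$ from Lemma~\ref{lem:changyouwang2} combined with an $O(R)$ $L^\infty$-bound on $\Q_L-\P$ and H\"older. This matches the paper's strategy in all essentials.

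The one place where your justification deviates from the paper's is the $L^\infty$ estimate $\|\Q_L - \P\|_{L^\infty(B(\x_0,R))}\leq C_\Lambda R$. You propose to derive it by establishing a Lipschitz bound for $\P$ on all of $B(\x_0, R)$ and then integrating the gradient along rays from the boundary. However, Theorem~\ref{generalgrowth} is a purely \emph{interior} estimate: the $L^\infty$--$L^1$ bound $\sup_{B_{R/2}}\phi(|\nabla\P|)\lesssim\mI{B_R}\phi(|\nabla\P|)$ gives Lipschitz control of $\P$ only away from $\partial B(\x_0,R)$, and the integration argument needs gradient control all the way up to the boundary. Closing this gap by invoking boundary regularity for the $\phi$-Laplacian is possible but is a nontrivial external ingredient. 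The paper avoids the issue entirely: since $\P$ is $\phi$-harmonic, the convex hull property gives $\P(B(\x_0,R))\subseteq\mathrm{conv}\big(\Q_L(\partial B(\x_0,R))\big)$, and this set has diameter $O(C_\Lambda R)$ because $\Q_L$ is Lipschitz on $B(\x_0,2R)$. Combined with the Lipschitz bound on $\Q_L$ alone, this yields $\|\Q_L - \P\|_{L^\infty}\lesssim C_\Lambda R$ without any regularity of $\P$ near the boundary. Apart from this point, your argument is correct.
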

\begin{proof}
 By Equation~\eqref{frozen_excess} the proof of
 Proposition~\ref{lemma:frozen_excess}, we know that
 \begin{equation*}
  \mI{B_R}|\V(\nabla\Q_L)-\V(\nabla \P)|^2 \d x
  \le \frac{1}{L} \mI{B_R} \abs{\nabla_\Q f_B(\Q_L)} | \Q_L-\P| \d x .
 \end{equation*}
 Under the assumption~\eqref{density-bound}, we can apply
 Lemma~\ref{lem:changyouwang1.5} to obtain an uniform (i.e. $L$-independent)
 $L^q$-bound for $L^{-1}\nabla_\Q f_B(\Q_L)$, 
 where~$q\in[1, \, +\infty)$ is arbitrary. Then, the H\"older inequality gives
 \begin{equation*}
  \mI{B_R}|\V(\nabla\Q_L)-\V(\nabla \P)|^2 \d x \le C_\Lambda
  \left(\mI{B_R} | \Q_L-\P|^{\frac{q}{q-1}} \d x \right)^{\frac{q-1}{q}} .
 \end{equation*}
 The assumption~\eqref{density-bound} also implies that $\Q_L$ 
 is Lipschitz continuous, and its Lispchitz constant is bounded in terms 
 of~$\Lambda$. Then, we can use the convex hull property,
 exactly as in the proof of~\eqref{convexhull}, to check that 
 $\|\Q_L - \P\|_{L^\infty(B_R)} \leq C_\Lambda R$.
 Therefore, the lemma follows.
\end{proof}

The $C^{1,\alpha}$-bound for~$\Q_L$ is now obtained
by repeating verbatim the arguments of Proposition~\ref{prop:C1,a} in Section~\ref{sect:splitting}.
Since Lemma~\ref{lemma:excess-difference} gives a $L$-independent bound, 
we are now able to deduce regularity estimates that do not depend on~$L$. 
As a result, we obtain the

\begin{lemma} \label{lemma:uniformC1alpha}
 Let $\Q_L$ be a Lipschitz minimizer of functional~\eqref{eq:2}, 
 on the ball $B(\x_0, R)$. Let~$\Lambda$ be defined as in~\eqref{density-bound}.
 Then, there exists~$\alpha\in (0, \, 1)$ (only depending on the characteristics of~$\phi$)
 and a constant~$C_{\Lambda, R}$, depending on $\Lambda$, $R$ but not on~$L$, such that 
 \begin{equation*} 
  \|\nabla \Q_L\|_{C^\alpha(B(\x_0, R/4))}  \leq C_{\Lambda,R}.
 \end{equation*}
\end{lemma}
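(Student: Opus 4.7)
The strategy is to rerun the comparison-with-$\phi$-harmonic-replacement argument used in Proposition~\ref{prop:C1,a}, but now feeding in the improved estimate of Lemma~\ref{lemma:excess-difference}, which is independent of $L$. Fix a ball $B_\rho = B(\x, \rho)$ contained in $B(\x_0, R)$, and let $\P$ denote the solution of~\eqref{frozen-bis} on $B_\rho$ with boundary datum $\Q_L$. First I will record the two pieces of information that now carry uniform constants in $L$: Lemma~\ref{lemma:excess-difference} supplies
\[
 \mI{B_\rho} |\V(\nabla \Q_L) - \V(\nabla \P)|^2 \, \d x \le C_\Lambda \rho,
\]
and the convex-hull argument (exactly as in~\eqref{convexhull}), combined with the Lipschitz bound encoded in $\Lambda$, yields $\|\Q_L - \P\|_{L^\infty(B_\rho)} \le C_\Lambda \rho$. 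Both constants depend only on $\Lambda$ and the characteristics of $\phi$.

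Next I will set up the excess functional $\Phi(\Q_L, B_r) := \mI{B_r}|\V(\nabla\Q_L) - (\V(\nabla\Q_L))_{\x,r}|^2 \, \d x$ and, for $\kappa \in (0, 1/2)$, split
\[
 \Phi(\Q_L, B_{\kappa\rho}) \lesssim \kappa^{-d} \mI{B_\rho}|\V(\nabla\Q_L) - \V(\nabla\P)|^2 \d x + \Phi(\P, B_{\kappa\rho}).
\]
The first term is controlled by the uniform estimate above by $C_\Lambda \kappa^{-d} \rho$. The second term is handled by Theorem~\ref{generalgrowth}, which gives $\Phi(\P, B_{\kappa\rho}) \le c\, \kappa^{2\gamma}\, \Phi(\P, B_\rho)$, and in turn $\Phi(\P, B_\rho) \lesssim \Phi(\Q_L, B_\rho)$ by the minimality of $\P$. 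Choosing $\kappa$ small enough that $c\, \kappa^{2\gamma} \le 1/2$, I obtain the key iteration
\[
 \Phi(\Q_L, B_{\kappa\rho}) \le \tfrac{1}{2} \Phi(\Q_L, B_\rho) + C_{\Lambda, \kappa} \rho.
\]

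Iterating this inequality $j$ times and summing the geometric tail gives $\Phi(\Q_L, B_{\kappa^j\rho}) \le 2^{-j} \Phi(\Q_L, B_\rho) + 2 C_{\Lambda,\kappa} \rho$, hence a power-type decay $\Phi(\Q_L, B_r) \le c_\beta (r/\rho)^\beta \Phi(\Q_L, B_\rho) + C_\Lambda \rho$ for some $\beta > 0$. To convert the linear-in-$\rho$ error term into a genuine Morrey decay, I will use the two-scale trick from the proof of Proposition~\ref{prop:C1,a}: apply the inequality first on $B(\x, r) \subset B(\x, s)$ and then on $B(\x, s) \subset B(\x, \rho)$, with $s := \sqrt{r\rho}$. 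The cross term then scales like $r^{\beta/2}$ for fixed $\rho$, and so
\[
 \Phi(\Q_L, B(\x, r)) \le C_{\Lambda, R}\, r^{\beta/2}
\]
uniformly in $L$, for all sufficiently small $r$ and all $\x \in B(\x_0, R/4)$. By Campanato's characterisation this says $\V(\nabla \Q_L) \in C^{\beta/4}$ with uniform seminorm, and since $\V^{-1}$ is $\mu$-Hölder with $\mu$ depending only on the characteristics of $\phi$ \cite[Lemma~2.10]{diestrover}, we conclude $\nabla\Q_L \in C^{\alpha}(B(\x_0, R/4))$ with $\alpha := \mu\beta/4$ and a bound depending only on $\Lambda$ and $R$.

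The main obstacle to watch is that every constant appearing in the iteration must be traced back to $\Lambda$ and the characteristics of $\phi$, and not to $L$. This is precisely where Lemma~\ref{lemma:excess-difference} replaces Proposition~\ref{lemma:frozen_excess}: the latter carried a factor $1/L$ which forced the Hölder constant in Proposition~\ref{prop:C1,a} to depend on $L$, whereas the former trades this factor against the uniform $L^q$-bound for $L^{-1}\nabla_\Q f_B(\Q_L)$ supplied by Lemma~\ref{lem:changyouwang1.5}, leaving a $C_\Lambda \rho$ right-hand side that is harmless in the iteration.
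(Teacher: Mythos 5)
Your proposal is correct and follows precisely the route the paper itself indicates, namely: repeat the excess-decay iteration of Proposition~\ref{prop:C1,a} verbatim, but replace the $L$-dependent comparison estimate of Proposition~\ref{lemma:frozen_excess} with the $L$-independent one from Lemma~\ref{lemma:excess-difference}, using the Lipschitz bound implied by $\Lambda$ for the convex-hull step. You have effectively supplied the details that the paper's brief ``repeating verbatim'' comment leaves implicit, and your bookkeeping of which constants trace back to $\Lambda$ and the characteristics of $\phi$ rather than $L$ is the crux, as you correctly identify.
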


We are now ready to prove Theorem~\ref{th:convergence}.

\begin{proof}[of Theorem~\ref{th:convergence}]
 Let~$\Q_L$ be a minimizer of $I_{mod}$, in the class defined by~\eqref{eq:3}. By Lemma~\ref{lem:3}
 we know that, up to a non relabelled subsequence, $\Q_L\rightharpoonup\Q$ in~$W^{1,\phi}$ 
 as~$L\to 0$, where $\Q_0$ is a minimizing harmonic map. The set
 \[
  S[\Q_0] := \left\{\x\in\Omega\colon \liminf_{\rho\to 0}
  \rho^{p-d}\int_{B(\x, \rho)} \phi(|\nabla\Q_0|)> 0\right\}
 \]
 is closed, due to the monotonicity formula (Lemma~\ref{lem:mon}), and moreover we have
 $\H^{d-p}(S[\Q_0])= 0$ , see \cite{Giusti}.
 Let~$\varepsilon > 0$ be given by Proposition~\ref{prop:uniform1}.
 For a fixed~$\x_0\in\Omega\setminus S[\Q_0]$,
 we can find a radius $R = R(\x_0) >0$ such that
 $R^{p-d}\int_{B(\x_0, R)} \phi(|\nabla\Q_0|)\leq\varepsilon/2$.
 Then, due to Lemma~\ref{lem:3}, we have 
 \[
  R^{p-d}\int_{B(\x_0, R)} e_L(\Q_L) \leq\varepsilon
 \]
 for any~$L$ small enough. We can then apply Proposition~\ref{prop:uniform1}
 and deduce that $e_L(\Q_L)\leq C_R$ on~$B(\x_0, R/2)$, for some constant~$C_R$
 that depends on~$R$ but not on~$L$. Finally, we apply Lemma~\ref{lemma:uniformC1alpha}
 to obtain the uniform bound $\|\Q_L\|_{C^{1,\alpha}(B(\x_0, R/8))}\leq C_R$.
 Thanks to this uniform bound and to Ascoli-Arzel\`a theorem, we deduce at once 
 that $\Q_0\in C^{1,\alpha}(B(\x_0, R/8))$ and that $\Q_L\to\Q_0$, $\nabla\Q_L\to\nabla\Q_0$
 uniformly on~$B(\x_0, R/8)$. This completes the proof of the theorem.
\end{proof}

\section{Biaxiality in the low temperature limit}

In this section, we show the biaxial character of bidimensional defect cores, if the temperature is low enough. Throughout the section, we assume that $\Omega$ is a bounded, smooth domain in~$\R^2$.
To simplify the analysis, we take $\phi(t) := t^{p}/p$ so that the elastic energy density reduces to~$\frac{1}{p}|\nabla\Q|^p$. 
This is consistent with our assumptions~\eqref{hp:first}--\eqref{hp:last} and, in this section,
we are only interested in regions of large gradients.
Therefore, we do not expect that this simplification should affect the qualitative conclusions of the analysis.

We introduce the rescaled temperature
\begin{equation} \label{t}
 t := \frac{27|A|C}{B^2} > 0 
\end{equation}
and we are interested in the limit as~$t\to +\infty$.
After a suitable non-di\-men\-sio\-na\-li\-sa\-tion, along the lines of \cite{HenaoMajumdarPisante}, the modified Landau-de Gennes free energy functional~\eqref{eq:2} reduces to
\begin{equation} \label{rescaled-LdG}
 F_t[\Q] := \int_{\Omega} \left\{\frac{\alpha}{p}\abs{\nabla\Q}^p + T(t) \left(1 - |\Q|^2\right)^2 + H(t) g(\Q) \right\} \d V,
\end{equation}
where
\[
 \alpha := \left(\frac{3}{2}\right)^{1-p/2}\bar{L}D, \qquad
 \bar{L} := \frac{27CL}{2D^2B}
\]
and~$D$ is a typical length of the domain, 
\[
 T(t) := \frac{t}{8} s_+^{2-p}, \qquad H(t) := \frac{3 + \sqrt{9+8t}}{32} s_+^{2-p}
\]
and
\[
 s_+ = \frac{B + \sqrt{B^2 + 24 A C}}{4C} = \frac{3 + \sqrt{9 + 8t}}{12C/B}
\]
is the optimal value of the scalar order parameter. Finally,
\[
g(\Q) := 1 + 3|\Q|^4 - 4\sqrt{6}\tr\Q^3
\]
is a potential that penalizes biaxiality (it is straightforward to verify 
that $g(\mathbf{Q})$ is minimized by uniaxial tensors of unit norm).

Since~$s_+$ grows as~$t^{1/2}$ as~$t\to+\infty$,
we have the asymptotic estimates
\begin{equation} \label{H,Tscalings}
 T(t)\sim t^{2-p/2}, \qquad H(t)\sim t^{3/2 - p/2} \qquad \textrm{for } t\to+\infty.
\end{equation}
Therefore, both the parameters~$T(t)$ and~$H(t)$ diverge as~$t\to+\infty$, but the penalization associated with deviation from unit norm is stronger than the one associated with biaxiality.

Our main result of this section is the following.

\begin{theorem} \label{th:no-isotropy}
Let~$\Q_t$ be a minimizer of the functional~$F_t$, in the admissible class~$\mathcal{A}$ defined by~\eqref{eq:3}. For any~$\delta > 0$ there exists a positive number~$t_0(\delta)$ such that, if $t\geq t_0(\delta)$ and~$\x\in\Omega$ satisfies $\dist(\x, \, \partial\Omega)\geq\delta$, then $\Q_t(\x)\neq\mathbf{0}$.
\end{theorem}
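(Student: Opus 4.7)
The plan is a proof by contradiction via blow-up at a putative isotropic point, culminating in a topological competitor argument. Suppose sequences $t_n \to +\infty$ and $\x_n \in \Omega$ with $\dist(\x_n, \partial\Omega) \geq \delta$ satisfy $\Q_{t_n}(\x_n) = \mathbf{0}$. The first step is the uniform bound $F_{t_n}[\Q_{t_n}] \leq M$ independent of $n$: testing against a fixed unit-norm uniaxial extension $\Q_b \in W^{1,p}(\Omega; \NN)$ of the boundary data, both $|\Q_b| \equiv 1$ and $g(\Q_b) \equiv 0$, so $F_{t_n}[\Q_b] = (\alpha/p) \|\nabla \Q_b\|_{L^p}^p$ is $t$-independent.

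Next, blow up at the natural norm-defect length scale $\varepsilon_n := T(t_n)^{-1/p} \to 0$, setting $\tilde{\Q}_n(\y) := \Q_{t_n}(\x_n + \varepsilon_n \y)$ on $B_{R_n} \subset \R^2$ with $R_n = \delta/\varepsilon_n \to +\infty$. A direct change of variable shows $\tilde{\Q}_n(\mathbf{0}) = \mathbf{0}$ and that $\tilde{\Q}_n$ minimizes
\begin{equation*}
\tilde{F}_n[\Q; B_R] := \int_{B_R} \left( \tfrac{\alpha}{p} |\nabla\Q|^p + (1 - |\Q|^2)^2 + \kappa_n\, g(\Q) \right) \d y,
\quad \kappa_n := H(t_n)/T(t_n) \sim t_n^{-1/2} \to 0.
\end{equation*}
Since $\phi(t) = (\alpha/p) t^p$ makes the function $\xi$ in Lemma~\ref{lem:mon} vanish identically, the rescaled energy $r \mapsto r^{p-2}\int_{B(\x,r)} e_t(\Q_t)\,\d x$ is exactly monotone increasing; combined with the uniform bound on $F_{t_n}[\Q_{t_n}]$ this produces $\tilde{F}_n[\tilde{\Q}_n; B_R] \leq C R^{2-p}$ uniformly in $n$. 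Appealing to Proposition~\ref{prop:uniform1} and Lemma~\ref{lemma:uniformC1alpha} in the rescaled setting (the elastic coefficient $\alpha$ is frozen), a subsequence satisfies $\tilde{\Q}_n \to \tilde{\Q}_\infty$ in $C^1_{\mathrm{loc}}(\R^2)$, and the limit is a local minimizer of
\begin{equation*}
\tilde{F}_\infty[\Q] := \int \left( \tfrac{\alpha}{p}|\nabla\Q|^p + (1 - |\Q|^2)^2 \right) \d y,
\end{equation*}
with $\tilde{\Q}_\infty(\mathbf{0}) = \mathbf{0}$ and $\int_{\R^2}(1 - |\tilde{\Q}_\infty|^2)^2 \, \d y < +\infty$ by Fatou.

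The contradiction comes from a topological competitor exploiting the simple connectedness of the unit sphere $\Sigma := \{\Q \in S_0 \colon |\Q| = 1\} \cong S^4$. By integrability of $(1-|\tilde{\Q}_\infty|^2)^2$ and Fubini, there exist arbitrarily large radii $\rho$ with $|\tilde\Q_\infty| \geq 1/2$ on $\partial B_\rho$ and $\rho \int_{\partial B_\rho}|\nabla\tilde\Q_\infty|^p \d S \leq C$. The projection $\tilde\Q_\infty/|\tilde\Q_\infty| \colon \partial B_\rho \to \Sigma$ is a loop in a simply connected manifold, hence admits a Lipschitz null-homotopy inside $\Sigma$; composing radially yields an extension $\tilde\Q^{**}\colon\overline{B_\rho} \to \Sigma$ with elastic energy $\lesssim \rho \int_{\partial B_\rho}|\nabla\tilde\Q_\infty|^p \d S \leq C$. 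A thin transition layer adjusts the norm to match $\tilde{\Q}_\infty$ exactly on $\partial B_\rho$ at bounded cost, producing a competitor $\tilde{\Q}^*$ with zero bulk contribution throughout the core. Meanwhile, $C^{1,\alpha}$ regularity of $\tilde\Q_\infty$ and $\tilde\Q_\infty(\mathbf{0}) = \mathbf{0}$ force $\int_{B_\rho}(1-|\tilde\Q_\infty|^2)^2 \d y \geq c_0 > 0$ uniformly in $\rho$. For $\rho$ large, $\tilde F_\infty[\tilde\Q^*; B_\rho] < \tilde F_\infty[\tilde\Q_\infty; B_\rho]$, contradicting minimality.

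The main obstacle will be the quantitative construction of the null-homotopy extension $\tilde\Q^{**}$ with elastic energy controlled linearly by the boundary Sobolev norm; this is classical for simply-connected targets but fails for the original target $\NN \cong \mathbb{R}\mathrm{P}^2$ (whose fundamental group is $\mathbb{Z}/2$), underscoring why biaxial configurations are energetically favored in the defect core. A secondary but routine technicality is the adaptation of the almost-monotonicity identity of Lemma~\ref{lem:mon} to the rescaled functional, which is clean because the pure-power elastic density makes the error term vanish identically.
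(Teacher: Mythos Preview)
Your first blow-up agrees with the paper's (the scale $\varepsilon_n = T(t_n)^{-1/p}\sim t_n^{1/2-2/p}$ is exactly the paper's), and the conclusion that the limit $\tilde{\Q}_\infty$ is a local minimizer of the $p$-Ginzburg--Landau energy with $\tilde{\Q}_\infty(\mathbf{0})=\mathbf{0}$ and $\tilde F_\infty[\tilde{\Q}_\infty;B_R]\leq CR^{2-p}$ is correct. The gap is in the final competitor step.

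You claim there are arbitrarily large radii with $\rho\int_{\partial B_\rho}|\nabla\tilde{\Q}_\infty|^p\,\d S\leq C$. This would follow from $\int_{\R^2}|\nabla\tilde{\Q}_\infty|^p<\infty$, but you only know $\int_{\R^2}(1-|\tilde{\Q}_\infty|^2)^2<\infty$; the elastic part is controlled only by $\int_{B_R}|\nabla\tilde{\Q}_\infty|^p\lesssim R^{2-p}$, which diverges for $p<2$. Fubini on $[R,2R]$ therefore gives a good radius with $\int_{\partial B_\rho}|\nabla\tilde{\Q}_\infty|^p\,\d S\lesssim R^{1-p}$, hence $\rho\int_{\partial B_\rho}|\nabla\tilde{\Q}_\infty|^p\,\d S\lesssim R^{2-p}\to\infty$. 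Your null-homotopy (or homogeneous) extension $\tilde{\Q}^{**}$ then has elastic energy of order $\rho^{2-p}$, the \emph{same} order as the upper bound for $\int_{B_\rho}|\nabla\tilde{\Q}_\infty|^p$, and the fixed gain $c_0$ from the potential cannot tip the balance; indeed, minimality of $\tilde{\Q}_\infty$ says precisely that no such competitor beats it. (Your argument would go through verbatim when $p=2$, because then $R^{2-p}=1$ and the total elastic energy is bounded; this is why the direct competitor works in \cite{contreraslamy}.)

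The paper closes the argument by a \emph{second} rescaling: set $\u_R(\x):=\tilde{\Q}_\infty(R\x)$, so that $G_R(\u_R;B_1)\leq C$ uniformly and the parameter in front of the potential is $\gamma R^p\to\infty$. One then shows (via a Luckhaus-type interpolation lemma) that $\u_R\to\u_*$ strongly in $W^{1,p}_{\mathrm{loc}}(B_1)$ with $\u_*$ a $p$-minimizing harmonic map into~$\S^4$, proves that such maps are $C^{1,\alpha}$ in dimension~$2$ (the key input is that $0$-homogeneous $p$-harmonic maps $B_1\to\S^k$, $k\ge 2$, are constant), and upgrades to locally uniform convergence. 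Since $\u_R(\mathbf{0})=\tilde{\Q}_\infty(\mathbf{0})=\mathbf{0}$ but $|\u_*|\equiv 1$, this yields the contradiction. In short, the missing idea is the blow-down to a $p$-harmonic limit together with its regularity, which replaces your competitor construction.

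A smaller point: invoking Proposition~\ref{prop:uniform1} for the $C^{1,\alpha}$ compactness of $\tilde{\Q}_n$ is not quite right, since that proposition needs a \emph{small}-energy hypothesis which fails here ($\tilde F_n[\tilde{\Q}_n;B_R]\sim R^{2-p}$). The paper gets the bound directly from $p$-Laplace regularity, using that the right-hand side of the Euler--Lagrange system is $L^\infty$-bounded via the maximum principle.
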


This result guarantees that, if the temperature (measured in dimensionless units) is low enough, 
then the minimizer~$\Q_t$ does not possess any isotropic point, except possibly in
a neighbourhood of the boundary. For a quadratic energy, i.e. when~$p=2$, it is already known that in the low temperature regime, minimizers do not have isotropic points, and this holds both in two-dimensional \cite{Canevari2D,DiFrattaetal} and three-dimensional domains \cite{contreraslamy,HenaoMajumdarPisante}. 
While, on the one hand, the Landau-de Gennes potential favours biaxial phases over the isotropic one when the temperature is low, 
on the other hand isotropic defect cores such as the radially symmetric, uniaxial hedgehog are less heavily penalized by a subquadratic elastic energy, compared to the quadratic case.

In contrast with the quadratic case, we are not able to exclude the presence 
of isotropic points in a neighbourhood of the boundary.
This boundary layer is related to the fact that it is difficult 
to obtain boundary regularity for the $p$-Laplace equation.

The absence of isotropic points means that biaxial escape takes place in the defect cores and indeed, the presence of biaxiality can be deduced from Theorem~\ref{th:no-isotropy} by topological arguments.  
Given a matrix~$\Q\in S_0$, we denote its eigenvalues by~$\lambda_{\max}(\Q)\geq\lambda_{\textrm{mid}}(\Q)\geq\lambda_{\min}(\Q)$. We also introduce the biaxiality parameter
\[
 \beta^2(\Q) = 1 - 6\frac{\left(\textrm{tr}\,\Q^3\right)^2}{\left(\textrm{tr}\,\Q^2\right)^3}.
\]
We recall that the biaxiality parameter satisfies $0\leq\beta^2(\Q)\leq 1$ and that~$\beta^2(\Q) = 0$ if and only if~$\Q$ is uniaxial.

\begin{corollary} \label{cor:biaxiality} 
Suppose that the boundary datum $\Q_b$ is topologically non-trivial, i.e. there is no continuous map $\Q\colon\overline{\Omega}\to\NN$ such that $\Q = \Q_b$ on~$\partial\Omega$.
Let~$\Q_t$ be a minimizer of~$F_t$ in the admissible class~$\mathcal{A}$ defined by~\eqref{eq:3}.
Then, for any~$\delta > 0$ there exists a positive number~$t_0(\delta)$ with the following property: if $t\geq t_0(\delta)$ and~$\lambda_{\max}(\Q_t(\x))>\lambda_{\mathrm{mid}}(\Q_t(\x))$ for any~$\x\in\Omega$ with~$\dist(\x, \, \partial\Omega)\leq\delta$, then
\[
 \max_{\x\in\Omega} \beta^2(\Q_t(\x)) = 1.
\]
\end{corollary}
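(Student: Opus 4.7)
The plan is to argue by contradiction: assume $\beta^2(\Q_t(\x))<1$ at every $\x\in\Omega$ and construct a continuous extension of $\Q_b$ to a map $\overline{\Omega}\to\NN$, contradicting the hypothesised topological non-triviality.

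The key algebraic observation is that for a traceless symmetric matrix $\Q$ one has $\tr\Q^3 = 3\,\lambda_{\max}(\Q)\,\lambda_{\mathrm{mid}}(\Q)\,\lambda_{\min}(\Q)$, and for $\Q\ne\mathbf{0}$ one has $\lambda_{\max}(\Q)>0>\lambda_{\min}(\Q)$; hence $\beta^2(\Q)<1$, which is equivalent to $\tr\Q^3\ne 0$, is also equivalent to $\lambda_{\mathrm{mid}}(\Q)\ne 0$. A direct computation on $\Q_b = s_+(\n_b\otimes\n_b - I/3)$ gives $\tr\Q_b^3 = \tfrac{2}{9}s_+^3>0$, so $\Q_b$ and all of $\NN$ lie in the ``prolate'' open set $\{\lambda_{\mathrm{mid}}<0<\lambda_{\max}\}$, on which $\lambda_{\max}$ is a simple eigenvalue.

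First I would check that $\Q_t\ne\mathbf{0}$ on all of $\overline{\Omega}$: Theorem~\ref{th:no-isotropy} applied with the same $\delta$ handles the interior $\{\dist(\cdot,\partial\Omega)\ge\delta\}$, while in the boundary strip the hypothesis $\lambda_{\max}(\Q_t)>\lambda_{\mathrm{mid}}(\Q_t)$ rules out $\Q_t=\mathbf{0}$ (which would force all eigenvalues to coincide). Under the contradiction hypothesis the continuous scalar $\tr\Q_t^3$ is therefore nowhere zero on $\overline{\Omega}$; since $\Omega$ is connected and $\tr\Q_t^3 = \tfrac{2}{9}s_+^3>0$ on $\partial\Omega$, the intermediate value theorem gives $\lambda_{\mathrm{mid}}(\Q_t)<0<\lambda_{\max}(\Q_t)$ throughout. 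The rank-one spectral projector $\p(\x)\otimes\p(\x)$ onto the top eigenspace is then continuous on $\overline{\Omega}$, and $\tilde\Q_t(\x) := s_+(\p(\x)\otimes\p(\x) - I/3)$ is a continuous map $\overline{\Omega}\to\NN$ extending $\Q_b$, contradicting topological non-triviality.

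The main technical obstacle is the continuity of $\Q_t$ up to $\partial\Omega$, which is not provided by the interior $C^{1,\alpha}$ estimate of Proposition~\ref{prop:C1,a}. To sidestep it, I would work on the shrunken subdomain $\Omega':=\{\dist(\cdot,\partial\Omega)>\delta/4\}$: there $\Q_t$ is continuous up to $\partial\Omega'$, and the hypothesis gives $\lambda_{\max}>\lambda_{\mathrm{mid}}$ on the annular strip $\overline{\Omega}\setminus\Omega'$. The nearest-point retraction $\Pi\colon\{\lambda_{\max}>\lambda_{\mathrm{mid}}\}\to\NN$ from~\eqref{eq:p8} is continuous, $\Pi\circ\Q_t$ extends $\Pi\circ\Q_t|_{\partial\Omega'}$ continuously across $\overline{\Omega'}$, and a $W^{1,\phi}$--trace and approximation argument, using $\Q_t=\Q_b$ on $\partial\Omega$ together with the continuity of $\Pi$ on the annular strip, identifies $[\Pi\circ\Q_t|_{\partial\Omega'}]$ with $[\Q_b|_{\partial\Omega}]$ in $\pi_1(\NN)\cong\mathbb{Z}/2$. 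The resulting null-homotopy of $\Q_b|_{\partial\Omega}$ produces the contradiction.
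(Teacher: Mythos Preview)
Your approach is essentially the same as the paper's: argue by contradiction and, under the assumption $\max\beta^2(\Q_t)<1$, use the leading-eigenvector projection to produce a continuous $\NN$-valued extension of $\Q_b$, contradicting topological non-triviality. The paper's proof is very short --- it simply invokes Theorem~\ref{th:no-isotropy} and \cite[Lemma~3.11]{Canevari2D} for the construction of the projection $\P(\x)=s_+(\n(\x)\otimes\n(\x)-\mathbf{I}/3)$ --- so your write-up actually supplies more of the algebra (the $\tr\Q^3=3\lambda_{\max}\lambda_{\mathrm{mid}}\lambda_{\min}$ identity and the connectivity argument) than the paper does, and you correctly flag the boundary-regularity issue that the paper glosses over.

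One small point to tighten: when you pass to the shrunken domain $\overline{\Omega'}$, your connectivity argument for $\tr\Q_t^3>0$ loses direct access to $\partial\Omega$, and on $\partial\Omega'$ the hypothesis $\lambda_{\max}>\lambda_{\mathrm{mid}}$ alone does not fix the sign of $\tr\Q_t^3$. The clean repair is the one your trace remark already points toward: by interior regularity $\tr\Q_t^3$ is continuous on the open set $\Omega$, nowhere zero under the contradiction hypothesis (since $\Q_t\neq\mathbf 0$ there), and hence of constant sign on the connected set $\Omega$; if that sign were negative, then $\tr\Q_t^3\in W^{1,p}(\Omega)$ would have nonpositive trace on $\partial\Omega$, contradicting $\tr\Q_b^3=\tfrac{2}{9}s_+^3>0$. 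This immediately gives $\lambda_{\max}(\Q_t)>\lambda_{\mathrm{mid}}(\Q_t)$ on all of $\Omega$, so $\Pi\circ\Q_t$ is defined and continuous on $\overline{\Omega'}$, and your annulus/homotopy argument then closes the proof.
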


We prove Theorem~\ref{th:no-isotropy} by contradiction, following the strategy in~\cite{contreraslamy}. Suppose that there exists a number~$\delta > 0$ a sequence~$t_j\nearrow+\infty$ and a sequence of points~$(\x_{t_j})$ in~$\Omega$ such that
\begin{equation} \label{xt}
 \dist(\x_{t_j}, \, \partial\Omega)\geq\delta, \qquad \Q_{t_j}(\x_{t_j}) = 0.
\end{equation}
From now on, we omit the subscript~$j$ and write~$t$, $\x_t$ instead of~$t_j$, $\x_{t_j}$.

Let $\delta_t := \delta t^{2/p - 1/2}\to+\infty$ as~$t\to+\infty$. 
We define the blown-up map
\begin{equation} \label{blownup}
 \bar{\Q}_t(\x) := \Q(\x_t + t^{1/2 - 2/p}\x) \qquad \textrm{for } \x\in B_{\delta_t}.
\end{equation}
The map~$\bar{\Q}_t$ minimizes the rescaled functional
\[
 \bar{F}_t(\bar{\Q}) := \int_{B_{\delta_t}} \left\{\frac{\alpha}{p}\abs{\nabla\bar{\Q}}^p + \bar{T}(t) \left(1 - |\bar{\Q}|^2\right)^2 + \bar{H}(t) g(\bar{\Q}) \right\} \d V
\]
subject to its own boundary conditions. Here, $\bar{T}(t)$ and~$\bar{H}(t)$ are defined by
\[
 \bar{T}(t) := t^{p/2 - 2}T(t), \qquad \bar{H}(t) := t^{p/2 - 2}H(t).
\]
Thanks to~\eqref{H,Tscalings}, we see that $\bar{H}(t)\sim t^{-1/2}\to 0$ as~$t\to +\infty$,
while $\bar{T}(t)$ is a bounded function of~$t$ and, in fact, it converges to a finite limit as~$t\to+\infty$:
\begin{equation} \label{beta}
 \lim_{t\to+\infty}\bar{T}(t) = 2^{p/2 - 4} \cdot 3^{p - 2} 
 \left(\frac{C}{B}\right)^{p - 2} =: \gamma>0.
\end{equation}
Moreover, $\bar{\Q}_t$ is a solution of the Euler-Lagranges equations:
\begin{equation} \label{EL-barQt}
 -\alpha\div\left(\abs{\nabla\bar{\Q}_t}^{p-2}\nabla\bar{\Q}_t\right) = 
 4\bar{T}(t) (1 - |\bar{\Q}_t|^2)\bar{\Q}_t + 12\bar{H}(t) 
 (\sqrt{6}\bar{\Q}_t^2 - |\bar{\Q}_t|^2\bar{\Q}_t) 
\end{equation}
on the ball~$B_{\delta_t}$.
A straightforward modification the maximum principle arguments in Lemma~\ref{lemma:max_principle} shows that the solutions of this system of equations satisfy $|\bar{\Q}_t|\leq 1$ pointwise a.e. on~$B_{\delta_t}$. Then, the right-hand side of~\eqref{EL-barQt} is bounded in~$L^\infty$, uniformly in the parameter~$t>0$. The regularity theory for $p$-Laplace systems implies that
\begin{equation} \label{holder_gradient}
 \|\nabla\bar{\Q}_t\|_{C^\theta(B_R)} \leq C_R \qquad \textrm{for any } R > 0
 \textrm{ and any } t \textrm{ large enough,}
\end{equation}
for some uniform~$\theta\in (0, \, 1)$ and some constant~$C_R$ that depends on~$R$ but not on~$t$. As a consequence, up to extraction of a subsequence, the maps $\bar{\Q}_t$ converge locally uniformly to a continuous map~$\bar{\Q}_\infty\colon\R^2\to S_0$. 

\begin{lemma} \label{lemma:blowup}
 For any~$R>0$, the map~$\bar{\Q}_\infty$ minimizes the functional
 \[
  \bar{F}_\infty(\Q; \, B_R) := \int_{B_R} \left\{ \frac{\alpha}{p}\abs{\nabla\Q}^p + \gamma \left(1 - |\Q|^2\right)^2\right\} \d V
 \]
 among all maps~$\Q\in W^{1,p}(B_R; \, S_0)$ such that $(1 - |\Q|^2)^2\in L^1(B_R)$ and $\Q = \bar{\Q}_\infty$ on~$\partial B_R$. Recall that~$\gamma$ has been defined in~\eqref{beta}. 
 Moreover, $|\bar{\Q}_\infty|\leq 1$ on~$\R^2$ and there holds $\bar{\Q}_\infty(\mathbf{0}) = \mathbf{0}$.
\end{lemma}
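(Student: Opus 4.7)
The plan is to derive the two pointwise conclusions for free from the known convergence, and to obtain the minimality of~$\bar{\Q}_\infty$ from the minimality of the $\bar{\Q}_t$ via a standard ``perturbation passes to the limit'' argument adapted to the regime $t\to+\infty$. First, $\bar{\Q}_\infty(\mathbf{0})=\mathbf{0}$ is immediate from~\eqref{blownup} and~\eqref{xt}: by definition $\bar{\Q}_t(\mathbf{0})=\Q_t(\x_t)=\mathbf{0}$ for every~$t$ in the sequence, and the locally uniform convergence $\bar{\Q}_t\to\bar{\Q}_\infty$ transfers the value. The bound $|\bar{\Q}_\infty|\le 1$ on~$\R^2$ is inherited in the same way from the uniform pointwise inequality $|\bar{\Q}_t|\le 1$ already supplied by the text (via the max-principle modification of Lemma~\ref{lemma:max_principle}).

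For the minimality, I fix $R>0$ and an arbitrary competitor $\Q\in W^{1,p}(B_R;S_0)$ with $(1-|\Q|^2)^2\in L^1(B_R)$ and $\Q=\bar{\Q}_\infty$ on $\partial B_R$. The first reduction is to the unit ball: replacing $\Q$ with the Euclidean retraction $\min(1, 1/|\Q|)\,\Q$ onto the closed unit ball of~$S_0$ (a linear subspace) does not increase $\int|\nabla\Q|^p$, since the retraction is $1$-Lipschitz, and it cannot increase the nonnegative bulk term $(1-|\Q|^2)^2$, which vanishes wherever the retraction is active. Since $|\bar{\Q}_\infty|\le 1$, the boundary trace is preserved, so I may assume $|\Q|\le 1$ in~$B_R$. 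Then $\Q-\bar{\Q}_\infty\in W^{1,p}_0(B_R;S_0)\cap L^\infty(B_R;S_0)$, and by mollification with cutoff I select $\xi_n\in C^\infty_{\mathrm c}(B_R;S_0)$ with $\xi_n\to\Q-\bar{\Q}_\infty$ in $W^{1,p}(B_R)$ and $\|\xi_n\|_{L^\infty}\le C$.

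The core competitor is $\tilde{\Q}_{t,n}:=\bar{\Q}_t+\xi_n$, which coincides with $\bar{\Q}_t$ in a neighbourhood of $\partial B_R$ and is therefore admissible for the minimality problem solved by $\bar{\Q}_t$ on $B_R$, yielding $\bar{F}_t(\bar{\Q}_t;B_R)\le\bar{F}_t(\tilde{\Q}_{t,n};B_R)$. I then let $t\to+\infty$, then $n\to+\infty$. The uniform $C^\theta$-gradient bound~\eqref{holder_gradient} combined with Ascoli--Arzel\`a yields $\bar{\Q}_t\to\bar{\Q}_\infty$ in $C^1(\overline{B_R})$; together with $\bar{T}(t)\to\gamma$, $\bar{H}(t)\to 0$, the uniform bound $|\bar{\Q}_t|\le 1$ (so the polynomial~$g$ is controlled), and $\|\xi_n\|_{L^\infty}\le C$, dominated convergence delivers
\begin{equation*}
 \bar{F}_t(\bar{\Q}_t;B_R)\longrightarrow\bar{F}_\infty(\bar{\Q}_\infty;B_R),\qquad \bar{F}_t(\tilde{\Q}_{t,n};B_R)\longrightarrow\bar{F}_\infty(\bar{\Q}_\infty+\xi_n;B_R).
\end{equation*}
Sending $n\to+\infty$, strong $W^{1,p}$-convergence $\bar{\Q}_\infty+\xi_n\to\Q$ handles the gradient term, while the uniform $L^\infty$ bound permits dominated convergence for the bulk term, producing $\bar{F}_\infty(\bar{\Q}_\infty;B_R)\le\bar{F}_\infty(\Q;B_R)$.

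The main obstacle I anticipate is the passage to the limit in the nonlinear bulk term $(1-|\cdot|^2)^2$, both at the level $t\to+\infty$ and at the level $n\to+\infty$: weak $W^{1,p}$-convergence alone is not enough to guarantee convergence of this integral. The two preliminary truncations---first of~$\Q$ itself, and then of the mollifications~$\xi_n$---are precisely what supply the uniform $L^\infty$ domination that reduces the step to dominated convergence. The disappearance of the biaxial penalty $\bar{H}(t)\,g(\bar{\Q}_t)$ is by contrast automatic from $\bar{H}(t)\sim t^{-1/2}\to 0$ together with the uniform bound $|\bar{\Q}_t|\le 1$.
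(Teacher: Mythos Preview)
Your proof is correct and follows essentially the same route as the paper's own argument: truncate the competitor to~$L^\infty$, exploit the $C^1$-convergence $\bar{\Q}_t\to\bar{\Q}_\infty$ furnished by the uniform H\"older bound~\eqref{holder_gradient} to pass to the limit in $\bar{F}_t(\bar{\Q}_t)\le\bar{F}_t(\text{competitor})$, and use $\bar{H}(t)\to 0$ together with the $L^\infty$ bounds to kill the biaxiality term. The only difference is cosmetic: the paper uses the competitor $\Q+\bar{\Q}_t-\bar{\Q}_\infty$ directly (which has the correct trace since $\Q=\bar{\Q}_\infty$ on~$\partial B_R$), whereas you insert an additional smooth-with-compact-support approximation layer~$\xi_n$ before passing to the limit; this extra step is harmless but not needed, since trace equality already suffices for admissibility and the $C^1$-convergence of $\bar{\Q}_t-\bar{\Q}_\infty$ to zero handles the limit in one shot.
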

\begin{proof}
 By the locally uniform convergence $\bar{\Q}_t\to\bar{\Q}_\infty$, we immediately see 
 that $|\bar{\Q}_\infty|\leq 1$ on~$\R^2$, $\bar{\Q}_\infty(\mathbf{0}) = \mathbf{0}$.
 Let~$\Q\in W^{1,p}(B_R; \, S_0)$ be an admissible competitor for~$\bar{\Q}_\infty$,
 i.e. a map such that $(1 - |\Q|^2)^2\in L^1(B_R)$ and $\Q = \bar{\Q}_\infty$ on~$\partial B_R$.
 By a truncation argument, we can assume w.l.o.g. that $\Q\in L^\infty(B_R)$. 
 If~$t$ is large enough, so that $B_R\subseteq B_{\delta_t}$, then
 $\Q  + \bar{\Q}_t - \bar{\Q}_\infty$ is an admissible competitor for~$\bar{\Q}_t$ and we have 
 $\bar{F}_t(\bar{\Q}_t) \leq \bar{F}_t(\Q  + \bar{\Q}_t - \bar{\Q}_\infty)$.
 Moreover, thanks to the uniform bound~\eqref{holder_gradient} and to Ascoli-Arzel\`a theorem,
 we deduce that $\nabla\bar{\Q}_t\to \nabla\bar{\Q}_\infty$ locally uniformly in~$\R^2$.
 We can hence pass to the limit as~$t\to+\infty$ in the inequality
 $\bar{F}_t(\bar{\Q}_t) \leq \bar{F}_t(\Q  + \bar{\Q}_t - \bar{\Q}_\infty)$ and deduce that 
 $\bar{F}_\infty(\bar{\Q}_\infty; \, B_R)\leq \bar{F}_\infty(\Q; \, B_R)$.
\end{proof}

\begin{lemma} \label{lemma:bound_monotonicity}
 For any~$R>0$, there holds
 \[
  \bar{F}_\infty(\bar{\Q}_\infty; \, B_R) \leq C R^{2-p}
 \]
 for some constant~$C$ that does not depend on~$R$. 
\end{lemma}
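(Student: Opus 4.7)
The plan is to derive the bound by working at the un-blown-up scale, where $\Q_t$ is a minimizer of $F_t$ on $\Omega$, and then to transport the estimate through the scaling encoded in~\eqref{blownup}. First I would observe that for the pure power density $\phi(t)=t^p/p$ used in this section, the quantity $\xi(t)=\phi^\prime(t)t-p\phi(t)$ vanishes identically. Consequently, the almost-monotonicity formula of Lemma~\ref{lem:mon}, applied to $F_t$, holds with no error term: for every $\x_0\in\Omega$ and every $R$ with $B_R(\x_0)\subset\Omega$, the map $\rho\mapsto\rho^{p-2}\int_{B_\rho(\x_0)}e_t(\Q_t)$ is non-decreasing, where $e_t$ denotes the integrand of $F_t$ and the non-negative potential of $F_t$ contributes to the Pohozaev identity with the favourable sign (exactly as the term $pL^{-1}\int_{B_\rho} f_B$ does in Lemma~\ref{lem:mon}).

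Next, I would fix a uniform energy bound $F_t[\Q_t]\leq C_0$ independent of $t$ by comparing $\Q_t$ with a $W^{1,p}$-map $\Q_\ast$ taking values in the zero set of the potential of $F_t$, i.e.\ the manifold of unit-norm uniaxial tensors; the existence of such an extension of the boundary datum is the topological hypothesis underlying Remark~\ref{remark:bd_datum}. Since the potential of $F_t$ vanishes identically on $\Q_\ast$, one gets $F_t[\Q_\ast]=(\alpha/p)\int_\Omega|\nabla\Q_\ast|^p=:C_0$, independent of $t$. Applying the monotonicity of the first step at $\x_0=\x_t$ (which satisfies $\dist(\x_t,\partial\Omega)\geq\delta$) with $\rho=\mu_t R$, where $\mu_t:=t^{1/2-2/p}\to 0$, and $t$ large enough that $\mu_t R\leq\delta$, I obtain
\begin{equation*}
 \int_{B_{\mu_t R}(\x_t)} e_t(\Q_t)\,\d\mathbf{y} \;\leq\; C_0\,\delta^{p-2}\,(\mu_t R)^{2-p}.
\end{equation*}

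The third step is a change of variables $\mathbf{y}=\x_t+\mu_t\x$, which combined with the defining identities $\bar{T}(t)=\mu_t^p T(t)$ and $\bar{H}(t)=\mu_t^p H(t)$ (built into the blow-up so that~\eqref{EL-barQt} is self-consistent) gives $\bar{F}_t(\bar{\Q}_t;B_R)=\mu_t^{p-2}\int_{B_{\mu_t R}(\x_t)} e_t(\Q_t)\leq C_0\,\delta^{p-2}\,R^{2-p}$, uniformly in $t$. Passing to the limit $t\to+\infty$ then yields the claim: the locally uniform convergence $\bar{\Q}_t\to\bar{\Q}_\infty$ together with $\bar{T}(t)\to\gamma$ from~\eqref{beta} lets me pass to the limit in the potential term, the biaxial contribution vanishes because $\bar{H}(t)\to 0$ and $g(\bar{\Q}_t)$ remains bounded, and the convex $p$-energy is weakly lower semicontinuous. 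The main obstacle I anticipate is essentially bookkeeping: verifying that the scaling identities relating $T,H$ to $\bar{T},\bar{H}$ are exactly what is needed to convert the $\mu_t R$-ball bound into an $R$-ball bound, and that the Pohozaev computation of Lemma~\ref{lem:mon} transfers verbatim to $F_t$ with the non-negative potential $T(t)(1-|\Q|^2)^2+H(t)g(\Q)$ playing the role of $L^{-1}f_B(\Q)$.
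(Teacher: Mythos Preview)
Your proposal is correct and follows essentially the same approach the paper intends: it refers the reader to \cite[Lemma~3.6]{HenaoMajumdarPisante} together with the monotonicity formula of Lemma~\ref{lem:mon}, which is precisely the route you spell out --- a global energy bound $F_t[\Q_t]\leq C_0$ obtained by comparison with a vacuum-valued competitor, exact monotonicity (since $\xi\equiv 0$ for the pure power $\phi(t)=t^p/p$) applied at $\x_t$, and then the scaling identity $\bar{F}_t(\bar{\Q}_t;B_R)=\mu_t^{p-2}\int_{B_{\mu_t R}(\x_t)}e_t(\Q_t)$ to transfer the bound before passing to the limit. Your bookkeeping checks (that $\bar T,\bar H$ are exactly $\mu_t^p T,\mu_t^p H$, and that the non-negative potential $T(t)(1-|\Q|^2)^2+H(t)g(\Q)$ enters the Pohozaev identity with the favourable sign) are the right ones and go through without difficulty.
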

\begin{proof} 
 We can reproduce the arguments of~\cite[Lemma~3.6]{HenaoMajumdarPisante}
 using the monotonicity formula given by Lemma~\ref{lem:mon}.
\end{proof}

For any~$R>0$, we define the map $\u_R\colon B_1\to S_0$ by 
\begin{equation} \label{blowndown}
 \u_R(\x) := \bar{\Q}_\infty(R\x) \qquad \textrm{for } \x\in B_1.
\end{equation}
Due to Lemma~\ref{lemma:blowup}, $\u_R$ satisfies $|\u_R|\leq 1$ on~$B_1$,
$\u_R(\mathbf{0}) = \mathbf{0}$ and is a minimizer of the functional
\begin{equation} \label{GR}
 G_R(\u) = G_R(\u; \, B_1) := \int_{B_1} \left\{ \frac{\alpha}{p}\abs{\nabla\u}^p + \gamma R^p \left(1 - |\u|^2\right)^2\right\} \d V
\end{equation}
subject to its own boundary condition. Moreover, thanks to Lemma~\ref{lemma:bound_monotonicity},
the quantity $G_R(\u_R; B_1)\leq C$ is uniformly bounded, with respect to~$R$.
Therefore, up to extraction of a (non.relabelled) subsequence, the maps $\u_R$ converge $W^{1,p}$-weakly to a limit map~$\u_*\in W^{1, p}(B_1; \, S_0)$ that satisfies $|\u_*| = 1$ a.e. on~$B_1$. In other words, $\u_*$ takes values in the unit sphere of the $5$-dimensional Euclidean space~$S_0$. We denote this sphere by~$\S^4$.

\begin{lemma} \label{lemma:harmonic}
 For any~$1/2 < \rho < 1$, the map~${\u_*}_{|B_\rho}$ is $p$-minimizing harmonic: namely, for any map~$\u\in W^{1,p}(B_1; \, \S^4)$ such that $\u = \u_*$ a.e. on~$B_1\setminus B_\rho$, there holds
 \[
  \frac{1}{p} \int_{B_\rho} \abs{\nabla\u_*}^p \d V
  \leq \frac{1}{p} \int_{B_\rho} \abs{\nabla\u}^p \d V. 
 \]
 Moreover, ${\u_R}_{|B_\rho}$ converges $W^{1,p}$-strongly to~${\u_*}_{|B_\rho}$, as~$R\to+\infty$.
\end{lemma}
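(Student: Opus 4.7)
I would deduce both assertions of the lemma from a single argument, comparing $\u_R$ with carefully constructed competitors, in the spirit of a $\Gamma$-convergence analysis for Ginzburg--Landau-type functionals. First, combining Lemma~\ref{lemma:bound_monotonicity} with the scaling~\eqref{blowndown} yields a uniform bound $G_R(\u_R)\leq C$ independent of $R$, so that $\int_{B_1}(1-|\u_R|^2)^2\lesssim R^{-p}$ and, by Rellich's theorem, $\u_R\to\u_*$ strongly in $L^p(B_1)$ with $|\u_*|=1$ almost everywhere. The weak convergence $\u_R\rightharpoonup \u_*$ in $W^{1,p}$ and lower semi-continuity of the Dirichlet integral then give
\[
 \int_{B_\rho}|\nabla\u_*|^p\leq \liminf_{R\to\infty}\int_{B_\rho}|\nabla\u_R|^p
\]
for every $\rho\in (1/2,1)$.

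Given a test map $\u\in W^{1,p}(B_1;\S^4)$ coinciding with $\u_*$ on $B_1\setminus B_\rho$, the next step is to produce an admissible variation $\tilde\u_R$ of $\u_R$ (agreeing with $\u_R$ on $\partial B_1$) with $\limsup_R G_R(\tilde\u_R) \leq \frac{\alpha}{p}\int_{B_\rho}|\nabla\u|^p$. Using Fubini I would pick a slicing radius $s\in (\rho,1)$ along which $\u_R\to \u_*$ in $L^p(\partial B_s)$ and $\sup_R\|\u_R\|_{W^{1,p}(\partial B_s)}<+\infty$, and then set $\tilde\u_R:=\u$ on $B_{s-\tau}$, $\tilde\u_R:=\u_R$ on $B_1\setminus B_s$, interpolating on the thin annulus $B_s\setminus B_{s-\tau}$ by projecting the convex combination $\eta(|\x|)\u_*+(1-\eta(|\x|))\u_R$ onto $\S^4$. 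Since $\tilde\u_R=\u_R$ on $B_1\setminus B_s$, the minimality $G_R(\u_R)\leq G_R(\tilde\u_R)$ reduces to a comparison on $B_s$; sending $R\to\infty$ with $\tau$ fixed and then letting $\tau$ and $s-\rho$ tend to zero, the shell contributions vanish and one obtains
\[
 \int_{B_\rho}|\nabla\u_*|^p\leq \int_{B_\rho}|\nabla\u|^p,
\]
which is the $p$-minimising harmonic property. Plugging in $\u:=\u_*$ as a competitor forces $\int_{B_\rho}|\nabla\u_R|^p\to \int_{B_\rho}|\nabla\u_*|^p$, and the uniform convexity of $L^p$ for $p\in(1,\infty)$ upgrades the weak convergence of gradients to strong $W^{1,p}$ convergence on $B_\rho$.

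The principal difficulty is the shell interpolation: because the penalisation coefficient $R^p$ diverges, a naive linear interpolation between $\u_*$ and $\u_R$ produces a map deviating from $\S^4$ by $O(|\u_R-\u_*|)$, whose contribution to the potential part is bounded only by $CR^p\|\u_R-\u_*\|_{L^2(\mathrm{shell})}^2$ and need not vanish. Projecting the interpolant onto $\S^4$ eliminates the penalisation altogether, and the chain-rule estimate $|\nabla\Pi_{\S^4}\v|\lesssim |\nabla\v|$ is valid wherever $|\v|\geq 1/2$; on the exceptional set $\{|\u_R|<1/2\}$, whose measure is $O(R^{-p})$ by the first paragraph, one can instead cut $\tilde\u_R$ off to a fixed point of $\S^4$ at negligible Dirichlet cost. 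A careful tuning of $\tau$ and of the slicing radius $s$ via a Luckhaus-type argument then controls every error term and closes the proof.
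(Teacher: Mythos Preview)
Your overall strategy coincides with the paper's: pick a good slicing radius by a Fubini/Fatou argument, build a Luckhaus-type transition in a vanishing annulus, compare energies, and upgrade to strong convergence by testing with $\u=\u_*$. You also correctly isolate the chief obstacle, namely that the diverging coefficient $R^p$ in front of the potential forces the interpolant to lie essentially on $\S^4$.

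There is, however, a concrete gap in your shell construction. If you project the convex combination $\eta\,\u_*+(1-\eta)\,\u_R$ onto $\S^4$, then at the outer radius $r=s$ your competitor takes the value $\u_R/|\u_R|$, not $\u_R$; since $|\u_R|$ is not identically $1$, the map $\tilde\u_R$ fails to match $\u_R$ on $\partial B_s$ and is therefore inadmissible for the minimality comparison. Your proposed remedy --- cutting $\tilde\u_R$ off to a fixed point of $\S^4$ on the set $\{|\u_R|<1/2\}$ --- is both unnecessary and unworkable: unnecessary because on the good slice $\partial B_s$ the one-dimensional Sobolev embedding $W^{1,p}(\partial B_s)\hookrightarrow C^0(\partial B_s)$ already yields $|\u_R|\to 1$ uniformly, so there is no bad set on the slice; unworkable because such a cutoff destroys $W^{1,p}$-regularity and its Dirichlet cost is not controlled merely by the measure of the bad set.

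The paper closes this gap by a \emph{two-layer} interpolation (its Lemma~\ref{lemma:interpolation}). An outer annulus of width $R^{-1}$ interpolates radially between $\u_R$ and $\u_R/|\u_R|$; this layer is \emph{not} on the sphere, so its potential energy must be estimated directly, and H\"older on the slice gives $\int_{\partial B_{\rho_0}}\bigl|1-|\u_R|^2\bigr|^p\lesssim R^{-p^2/2}G_R(\u_R;\partial B_{\rho_0})^{p/2}$, which combined with the radial gradient $\sim R\,|1-|\u_R||$ and the layer width $R^{-1}$ yields a total contribution $\lesssim R^{-p^2/2+p-1}\to 0$. An inner annulus of width $\lambda_R\sim\|\u_R-\u_*\|_{L^p(\partial B_{\rho_0})}$ then interpolates between $\u_R/|\u_R|$ and $\u_*$, both of unit norm, and is projected onto $\S^4$; here your chain-rule estimate applies legitimately. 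It is precisely this first layer, bridging $\u_R$ to its normalisation while keeping the potential term under control, that is missing from your sketch.
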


The proof of Lemma~\ref{lemma:harmonic} builds upon classical compactness arguments for harmonic maps which are due to Luckhaus~\cite{Luckhaus-PartialReg}, and is based on the following result,
which is an adaptation of Lemma~1 in~\cite{Luckhaus-PartialReg}. In contrast with the case considered in~\cite{Luckhaus-PartialReg}, we are dealing here with $2$-dimensional domains only; on the other hand, we have to include in our analysis the Ginzburg-Landau potential $(1 - |\u|^2)^2$, which is not present in~\cite{Luckhaus-PartialReg}. Similar results, for quadratic energies on three-dimensional domains, have been proven in~\cite{Canevari3D}. We denote
\[
 G_R(\u; \, \partial B_\rho) := \int_{\partial B_\rho}
 \left\{ \frac{\alpha}{p}\abs{\nabla\u}^p + \gamma R^p \left(1 - |\u|^2\right)^2\right\} \d S.
\]
 
\begin{lemma} \label{lemma:interpolation}
 There exist positive constants~$\delta$ and~$C$ with the following property. Let~$1/2 < \rho_0 < 1$, $0 < \lambda < 1/4$, $R > 4$ be given numbers, and set $\mu := \lambda + R^{-1}$. Let $\u, \v\in W^{1, p}(\partial B_{\rho_0}; \, S_0)$ be given maps that satisfy
 \begin{gather}
 \frac{1}{2} \leq |\u| \leq 2, \qquad |\v| = 1 \qquad \textrm{a.e. on } \partial B_{\rho_0} \label{hp:interpolation1} \\
 \left(\|\nabla\u\|_{L^p(\partial B_{\rho_0})}^{1/p} + \|\nabla\v\|_{L^p(\partial B_{\rho_0})}^{1/p}\right)\|\u - \v\|_{L^p(\partial B_{\rho_0})}^{1 - 1/p} \leq \delta. \label{hp:interpolation2}
 \end{gather}
 Then, there exists a  map~$\mathbf{w}\in W^{1,p}(B_{\rho_0}\setminus B_{\rho_0(1-\mu)}; \, S_0)$
 such that $\mathbf{w}(\x) = \u(\x)$ for a.e.~$\x\in \partial B_{\rho_0}$, $\mathbf{w}(\x) = \v((1-\mu)^{-1}\x)$ for a.e.~$\x\in \partial B_{\rho_0(1-\mu)}$, and
 \[
  \begin{split}
  &G_R(\mathbf{w}; \, B_{\rho_0}\setminus B_{\rho_0(1-\mu)}) \leq C\bigg( 
  \mu G_R(\u; \, \partial B_{\rho_0})
  + R^{-p^2/2+p-1} G_R^{p/2}(\u; \, \partial B_{\rho_0}) \\
  &\qquad\qquad + \lambda \int_{\partial B_{\rho_0}} \abs{\nabla\v}^p\d S 
  + \lambda^{1-p} \int_{\partial B_{\rho_0}} \abs{\u - \v}^p\d S\bigg) .
  \end{split}
 \]
\end{lemma}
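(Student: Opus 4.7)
The plan is to construct~$\mathbf{w}$ explicitly in polar coordinates on the annulus
$A := B_{\rho_0}\setminus B_{\rho_0(1-\mu)}$, a construction made tractable by the fact that $\partial B_{\rho_0}$ is $1$-dimensional. I decompose~$A = A_1\cup A_2$ with $A_1$
the outer collar of radial width~$\rho_0/R$ (adjacent to~$\partial B_{\rho_0}$) and $A_2$
the inner collar of width~$\rho_0\lambda$ (adjacent to~$\partial B_{\rho_0(1-\mu)}$).
The role of~$A_1$ is to project~$\u$ onto~$\S^4$, exploiting that $|\u|\geq 1/2$ makes
$\tilde{\u}:=\u/|\u|$ well defined with $|\nabla\tilde{\u}|\lesssim|\nabla\u|$ and
$|\u-\tilde{\u}|=||\u|-1|$; the role of~$A_2$ is to perform a linear radial interpolation
between~$\tilde{\u}$ and~$\v$ (after pulling the latter back by the rescaling $\x\mapsto(1-\mu)^{-1}\x$).

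I then estimate the energy slice by slice, using $|\nabla\mathbf{w}|^p\lesssim|\partial_r\mathbf{w}|^p+r^{-p}|\partial_\theta\mathbf{w}|^p$. The tangential derivative is a pointwise convex combination of $\nabla\u$, $\nabla\tilde{\u}$, and~$\nabla\v$, and integration in the radial direction yields $\mu G_R(\u;\partial B_{\rho_0})+\lambda\int_{\partial B_{\rho_0}}|\nabla\v|^p$. The radial derivative in~$A_2$ costs $\lambda^{1-p}\|\tilde{\u}-\v\|_{L^p}^p\lesssim\lambda^{1-p}(\|\u-\v\|_{L^p}^p+\||\u|-1\|_{L^p}^p)$; the first summand is precisely the fourth term of the bound. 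For the remaining quantity $\||\u|-1\|_{L^p}^p$ and for the radial cost in~$A_1$, I use the elementary inequality $||\u|-1|\leq|1-|\u|^2|$ together with H\"older's inequality on the $1$-dimensional circle (since~$p<2$) to obtain
\[
\||\u|-1\|_{L^p(\partial B_{\rho_0})}^p \lesssim \|1-|\u|^2\|_{L^2(\partial B_{\rho_0})}^p \lesssim R^{-p^2/2}\,G_R^{p/2}(\u;\partial B_{\rho_0}),
\]
so the radial cost in~$A_1$ produces exactly the stated $R^{p-1}\cdot R^{-p^2/2}G_R^{p/2}=R^{-p^2/2+p-1}G_R^{p/2}$ term. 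For the Ginzburg-Landau contribution to~$G_R(\mathbf{w})$, in~$A_1$ the bound $||\mathbf{w}|-1|\leq||\u|-1|$ gives a cost controlled by $R^{p-1}\|1-|\u|^2\|_{L^2}^2\leq R^{-1}G_R\leq\mu G_R$, while in~$A_2$ the identity $1-|\mathbf{w}|^2 = s(1-s)|\tilde{\u}-\v|^2$ (valid because~$|\tilde{\u}|=|\v|=1$) together with the smallness of $\|\u-\v\|_{L^\infty}$ provided by~\eqref{hp:interpolation2} via $1$-D Sobolev embedding forces this contribution into the already-named terms.

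The main obstacle will be producing the exponent of~$R$ in the mixed term $R^{-p^2/2+p-1}G_R^{p/2}$ with the correct scaling: it arises from balancing the radial cost of projecting~$\u$ onto~$\S^4$ (of order $R^{p-1}\||\u|-1\|_{L^p}^p$) against the Ginzburg-Landau control $\|1-|\u|^2\|_{L^2}^2\lesssim R^{-p}G_R$, together with the $L^p\hookrightarrow L^2$ embedding available on a~$1$-dimensional circle for~$p\leq 2$. A secondary technical point is to verify that the Ginzburg-Landau cost in~$A_2$, involving the fourth power $|\tilde{\u}-\v|^4$, can genuinely be absorbed into the $\lambda^{1-p}\|\u-\v\|_{L^p}^p$ term; this uses the smallness hypothesis~\eqref{hp:interpolation2} combined with the fact that the $1$-D Sobolev embedding $W^{1,p}(\partial B_{\rho_0})\hookrightarrow C^{1-1/p}$ renders $\|\u-\v\|_{L^\infty}$ small, so that the $L^4$-norm is dominated by the $L^p$-norm up to a factor that scales compatibly with the remaining terms.
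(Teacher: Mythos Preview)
Your construction and the paper's are essentially identical: the same two-collar decomposition (an outer collar of width $\sim R^{-1}$ to pass from~$\u$ to $\tilde{\u}:=\u/|\u|$, an inner collar of width $\sim\lambda$ to interpolate linearly between~$\tilde{\u}$ and~$\v$), the same use of the Lipschitz property of $y\mapsto y/|y|$ on $\{|y|\geq 1/2\}$, the same H\"older step $\|1-|\u|^2\|_{L^p(\partial B_{\rho_0})}^p\lesssim R^{-p^2/2}G_R^{p/2}(\u;\partial B_{\rho_0})$ to generate the exponent $-p^2/2+p-1$, and the same use of the one-dimensional Gagliardo--Nirenberg inequality together with~\eqref{hp:interpolation2} to obtain $\|\u-\v\|_{L^\infty(\partial B_{\rho_0})}\leq C\delta$.

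There is, however, one genuine gap in your treatment of~$A_2$. You keep the linear interpolant $\mathbf{w}=(1-s)\tilde{\u}+s\v$ as it is and try to estimate the Ginzburg--Landau cost via $1-|\mathbf{w}|^2=s(1-s)|\tilde{\u}-\v|^2$, claiming that
\[
\gamma R^p\int_{A_2}(1-|\mathbf{w}|^2)^2\ \lesssim\ R^p\lambda\int_{\partial B_{\rho_0}}|\tilde{\u}-\v|^4
\]
can be absorbed into $\lambda^{1-p}\int_{\partial B_{\rho_0}}|\u-\v|^p$. Using $|\tilde{\u}-\v|\leq C|\u-\v|$ and $\|\u-\v\|_{L^\infty}\leq C\delta$, this quantity is at most $CR^p\lambda\,\delta^{4-p}\int_{\partial B_{\rho_0}}|\u-\v|^p$, so the absorption would require $R^p\lambda^p\leq C\delta^{-(4-p)}$. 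Since $\delta$ and $C$ must be universal while $R>4$ and $\lambda\in(0,1/4)$ are arbitrary, this fails for large~$R$. The paper sidesteps the issue entirely: after forming the linear interpolant $\tilde{\mathbf{w}}$ in the inner collar, it observes (exactly from your $L^\infty$-smallness argument) that $|\tilde{\mathbf{w}}|\geq 1-C\delta\geq 1/2$ for $\delta$ small, and then \emph{renormalizes} $\mathbf{w}:=\tilde{\mathbf{w}}/|\tilde{\mathbf{w}}|$. This makes $|\mathbf{w}|\equiv 1$ on~$A_2$, so the Ginzburg--Landau contribution there vanishes; the gradient bound survives because the projection is Lipschitz on $\{|y|\geq 1/2\}$, and the boundary values are unchanged since $|\tilde{\u}|=|\v|=1$. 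With that one-line modification your proof is complete and coincides with the paper's.
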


We postpone the proof of Lemmas~\ref{lemma:harmonic} and~\ref{lemma:interpolation}, and conclude the proof of Theorem~\ref{th:no-isotropy} first. The last ingredient is the following regularity result for minimizing $p$-harmonic maps:

\begin{proposition} \label{prop:regularity-harmonic}
 Let~$\Omega$ be a smooth, bounded domain in~$\R^2$, and let $\u_*\colon\Omega\to\S^k$ be a $p$-minimizing harmonic map, with~$k\geq 2$, $1<p<+\infty$. Then, $\u_*\in C^{1, \alpha}_{\mathrm{loc}}(\Omega)$ for some~$\alpha\in (0, \, 1)$.
\end{proposition}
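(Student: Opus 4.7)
The plan is to first show that $\u_*$ has no singular points in~$\Omega$, and then to apply standard regularity theory for $p$-harmonic systems with bounded right-hand side. The $p$-minimizing condition into~$\S^k$ means that $\u_*$ weakly solves the Euler-Lagrange system
\[
 -\div\left(\abs{\nabla\u_*}^{p-2}\nabla\u_*\right) = \abs{\nabla\u_*}^p \u_*,
\]
where the right-hand side comes from the Lagrange multiplier enforcing $|\u_*|=1$. Once we know that $\u_*$ is continuous, this is a quasilinear elliptic system of $p$-Laplace type with a bounded right-hand side (after a bootstrap using H\"older continuity and $\abs{\nabla\u_*}\in L^p_{\mathrm{loc}}$), and the DiBenedetto--Tolksdorf type $C^{1,\alpha}$-regularity theory applies exactly as in Section~\ref{sect:splitting}. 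So the crux is proving continuity.

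For continuity, I would invoke the Hardt--Lin partial regularity theorem for $p$-energy minimizing maps: the singular set $\mathcal{S}$ of~$\u_*$ is a closed subset of~$\Omega$ with $\dim_{\mathcal{H}}\mathcal{S}\le d-\lfloor p\rfloor -1$. In our situation $d=2$ and $p>1$ forces $\lfloor p\rfloor\ge 1$, so $\dim_{\mathcal{H}}\mathcal{S}\le 0$, and by closedness $\mathcal{S}$ consists of isolated points in~$\Omega$. Fix an alleged singular point $\x_0\in\mathcal{S}$ and consider the blow-ups $\u_r(\x):=\u_*(\x_0+r\x)$. A standard monotonicity formula (analogous to Lemma~\ref{lem:mon}) yields a uniform bound on the rescaled $p$-energy, so that up to a subsequence $\u_r$ converges weakly in $W^{1,p}_{\mathrm{loc}}$ to a tangent map $\phi\colon\R^2\to\S^k$ that is $0$-homogeneous and itself $p$-minimizing on every ball. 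Writing $\phi(\x)=\psi(\x/|\x|)$ for some $\psi\colon\S^1\to\S^k$, one checks that on any annulus the $p$-energy of~$\phi$ is a positive multiple of the one-dimensional $p$-energy of~$\psi$.

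The topological step is then the main obstacle, and it is exactly where the hypothesis $k\ge 2$ enters. Since $\pi_1(\S^k)=0$ for $k\ge 2$, the loop $\psi$ is null-homotopic, so there exists a Lipschitz extension $\widetilde\psi\colon\overline{B_1}\to\S^k$ with $\widetilde\psi_{|\partial B_1}=\psi$. On the ball $B_R$ compare $\phi$ with a competitor that coincides with $\phi$ on the annulus $B_R\setminus B_{R/2}$ and with a rescaled copy of $\widetilde\psi$ on $B_{R/2}$, so that the competitor is well-defined and admissible. A direct $p$-energy computation in polar coordinates, exploiting the fact that the annular $p$-energy of~$\phi$ is $\int_{R/2}^R r^{1-p}\,dr\cdot\int_{\S^1}|\nabla_\theta\psi|^p$, shows that $p$-minimality of $\phi$ forces $\psi$ to be constant a.e., hence $\phi\equiv \mathrm{const}$. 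By the upper semicontinuity of the $p$-energy density and the characterisation of regular points via vanishing tangent-map density (see again the Hardt--Lin framework), this contradicts $\x_0\in\mathcal{S}$. Thus $\mathcal{S}=\emptyset$, $\u_*$ is continuous, and the $C^{1,\alpha}_{\mathrm{loc}}$ conclusion follows from the quasilinear regularity theory. The main obstacle is the rigorous implementation of the topological comparison argument on the annulus, in particular matching boundary values and keeping track of the homogeneity factors so that the competitor's $p$-energy is strictly less than $\phi$'s whenever $\psi$ is non-constant.
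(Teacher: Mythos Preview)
Your reduction to tangent maps is fine, and the Hardt--Lin dimension bound does give isolated singularities when $d=2$. The gap is in the comparison step, and it is not a matter of bookkeeping: for $1<p<2$ the argument as written cannot close.

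Concretely, the $0$-homogeneous tangent map $\phi(\x)=\psi(\x/|\x|)$ has \emph{finite} $p$-energy on $B_R$ when $p<2$, namely
\[
 \int_{B_R}\abs{\nabla\phi}^p = \left(\int_0^R r^{1-p}\,\d r\right)\cdot E
 = \frac{R^{2-p}}{2-p}\,E, \qquad E := \int_{\S^1}\abs{\psi'}^p\,\d\theta,
\]
while your competitor (equal to $\phi$ on $B_R\setminus B_{R/2}$ and to the rescaled filling $\widetilde\psi(2\cdot/R)$ on $B_{R/2}$) has energy
\[
 \frac{R^{2-p}-(R/2)^{2-p}}{2-p}\,E \;+\; (R/2)^{2-p}\int_{B_1}\abs{\nabla\widetilde\psi}^p .
\]
Both scale as $R^{2-p}$, so the comparison is $R$-independent and reduces to the inequality $\int_{B_1}|\nabla\widetilde\psi|^p < E/(2-p)$. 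The null-homotopy $\pi_1(\S^k)=0$ only produces \emph{some} Lipschitz filling $\widetilde\psi$; it gives no control on its $p$-energy, and there is no reason it should beat the cone. (For $p\geq 2$ your idea does work, but only because the cone then has infinite energy and any $W^{1,p}$ filling wins automatically; this is precisely why the case $1<p<2$ is singled out in the paper.)

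The paper's proof supplies the missing idea via a second-variation argument rather than a topological one. One first shows, from the $p$-harmonic map equation in the angular variable, that $|\psi'|$ is constant; the equation then reduces to the ordinary harmonic map equation, so $\psi$ parametrizes a great circle and, up to rotation, $\psi(\theta)=(\cos j\theta,\sin j\theta,0,\ldots,0)$ for some integer $j$. Using $k\geq 2$, one tilts the cone out of its equatorial plane via the explicit family
\[
 \mathbf{w}_t(r,\theta)=\big(\cos(t\eta(r))\cos j\theta,\ \cos(t\eta(r))\sin j\theta,\ \sin(t\eta(r)),\ 0,\ldots,0\big),
\]
with $\eta(1)=0$, and computes the second variation of the $p$-energy at $t=0$. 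Choosing $\eta(r)=1-r^\alpha$ with $\alpha>0$ small makes this second variation strictly negative whenever $|j|\geq 1$ and $1<p<2$, contradicting minimality. Hence $j=0$, the tangent map is constant, and Federer dimension reduction gives the result. The point is that the cone over a great circle is a genuine critical point with finite energy, and one must show it is \emph{unstable}, not merely non-unique; a bare filling argument cannot detect this.
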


With the help the previous results, Theorem~\ref{th:no-isotropy} and 
Corollary~\ref{cor:biaxiality} follow easily.

\begin{proof}[Conclusion of the proof of Theorem~\ref{th:no-isotropy}]
 Thanks to Lemma~\ref{lemma:harmonic} and Proposition~\ref{prop:regularity-harmonic}, 
 we have strong convergence $\u_R\to\u_*$ in~$W^{1,p}_{\mathrm{loc}}(B_1)$, and moreover 
 $\u_*\in C^{1, \alpha}_{\mathrm{loc}}(B_1)$. Then, by adapting the uniform 
 convergence arguments above, we see that $\u_R\to\u_*$ locally uniformly in the
 open ball~$B_1$. But then we must have $\u_*(\mathbf{0}) = \mathbf{0}$, since
 $\u_R(\mathbf{0}) = \mathbf{0}$ for any~$R$. This yields the desired contradiction.
\end{proof}

\begin{proof}[of Corollary~\ref{cor:biaxiality}]
 For fixed~$\delta>0$, by Theorem~\ref{th:no-isotropy} we know that there 
 exists~$t_0 = t_0(\delta) > 0$ such that, when~$t\geq t_0$, there holds
 \begin{equation} \label{cor1}
  \Q_t(\x)\neq\mathbf{0} \qquad \textrm{for any } \x\in\Omega\colon
  \dist(\x, \, \partial\Omega)\geq\delta.
 \end{equation}
 By contradiction, suppose that the the boundary datum~$\Q_b\colon\partial\Omega\to\NN$
 is topologically non-trivial, that
 \begin{equation} \label{cor2}
  \lambda_{\max}(\Q_t(\x)) > \lambda_{\mathrm{mid}}(\Q_t(\x)) \qquad 
  \textrm{for any } \x\in\Omega\colon \dist(\x, \, \partial\Omega)\geq\delta
 \end{equation}
 and that
 \begin{equation} \label{cor3}
  \max_{\x\in\Omega} \beta^2(\Q_t(\x)) < 1.
 \end{equation}
 Thanks to~\eqref{cor1}--\eqref{cor3} and to the arguments in~\cite[Lemma~3.11]{Canevari2D},
 we can construct a continuous extension $\P\colon\Omega\to\NN$ of the boundary datum~$\Q_b$.
 This map has the form~$\P(\x) := s_+(\n(\x)\otimes\n(\x) - \mathbf{I}/3)$, where 
 $\n(\x)$ is a unit eigenvector associated with the leading eigenvalue 
 $\lambda_{\max}(\Q_t(\x))$. The existence of such an extension contradicts the 
 topological non-triviality of~$\Q_b$ and completes the proof.
\end{proof}

We now come back to the proof of the auxiliary results we used.

\begin{proof}[of Lemma~\ref{lemma:interpolation}]
 By a scaling argument, it suffices to prove the lemma in case~$\rho_0 = 1$.
 We work in polar coordinates~$(r, \, \theta)\in (0, \, 1)\times(0, \, 2\pi)$. For 
 $1 - R^{-1} < r < 1$, we define 
 \[
  t(r) := R(r - 1 + R^{-1})
 \]
 and
 \[
  \mathbf{w}(r, \, \theta) := \left(1 - t(r)\right) \frac{\u(\theta)}{|\u(\theta)|} + t(r) \u(\theta)
 \]
 i.e. we intepolate linearly, in the radial direction, between~$\u$ and~$\u/|\u|$ to define~$\mathbf{w}$ on the annulus~$B_1\setminus B_{1 - R^{-1}}$. A straightforward computation gives
 \begin{equation} \label{interp1}
  \abs{\partial_r\mathbf{w}} = R \abs{1 - |\u|} \leq R\abs{1 - |\u|^2}, \quad
  \abs{\partial_\theta\mathbf{w}} \leq \abs{\partial_\theta\u} +  \abs{\partial_\theta\left(\frac{\u}{|\u|}\right)}
  \leq C \abs{\partial_\theta\u}
 \end{equation}
 because $|\u|\geq 1/2$ by assumption~\eqref{hp:interpolation1} and the map $\mathbf{y}\mapsto \mathbf{y}/\mathbf{|y|}$ is Lipschitz continuous on the set~$\{|\mathbf{y}|\geq 1/2\}$. Moroever,
 since $|\u|\leq 2$ by~\eqref{hp:interpolation1}, we also have~$|\mathbf{w}|\leq 2$ and hence
 \begin{equation} \label{interp2}
  \abs{1 - |\mathbf{w}|^2} \leq 3 \abs{1 - |\mathbf{w}|} = 3t\abs{1 - |\u|} \leq 3\abs{1 - |\u|^2}.
 \end{equation}
 Since~$p < 2$, the H\"older inequality implies
 \begin{equation} \label{interp3}
  \int_{\partial B_1} \abs{1 - |\u|^2}^p \d S
  \leq C \left( \int_{\partial B_1} \abs{1 - |\u|^2}^2\d S\right)^{p/2} \leq C R^{-p^2/2} G_R^{p/2}(\u; \, \partial B_1)
  \end{equation}
  for some constant~$C$ that only depends on~$p$ and~$\gamma$. By combining~\eqref{interp1},
  \eqref{interp2} and~\eqref{interp3}, and integrating over the annulus $(\rho, \, \theta)\in (1-R^{-1}, \, 1)\times (0, \, 2\pi)$, we obtain that
  \begin{equation} \label{interp4}
   \begin{split}
   G_R(\mathbf{w}; \, B_1\setminus B_{1 - R^{-1}}) &\leq C R^{-1} G_R(\u; \, \partial B_1)  
   + CR^{p-1}\int_{\partial B_1} \abs{1 - |\u|^2}^p \d S \\
   &\leq C R^{-1} G_R(\u; \, \partial B_1) + C R^{-p^2/2+p-1} G_R^{p/2}(\u; \, \partial B_1).
   \end{split}
  \end{equation}
  Note that the Jacobian factor arising from the change of variable, as well as the extra powers of~$r$ arising in the expression of~$|\nabla\mathbf{w}|$, are uniformly bounded because we have assumed that $R> 4$, and hence~$3/4<r<1$.
  
  Now, for $1 - \mu < r < 1 - R^{-1}$ (where we have set $\mu := \lambda + R^{-1}$) we define
  \[
  s(r) := \lambda^{-1}(r - 1 + \mu)
  \]
  and
  \[
   \tilde{\mathbf{w}}(r, \, \theta) := \left(1 - s(r)\right)\v(\theta) + s(r) \frac{\u(\theta)}{|\u(\theta)|} 
  \]
  i.e. we intepolate linearly, in the radial direction, between~$\u/|\u|$ and~$\v$.
  Using again the Lipschitz continuity of $\mathbf{y}\mapsto \mathbf{y}/\mathbf{|y|}$ on the set~$\{|\mathbf{y}|\geq 1/2\}$ and the fact that $|\u|\geq 1/2$, $|\v| = 1$ by assumption~\eqref{hp:interpolation1}, we compute that
  \[
   \abs{\partial_r\mathbf{w}} \leq  \lambda^{-1}\abs{\frac{\u}{|\u|} - \v} \leq \lambda^{-1}\abs{\u - \v}, \qquad
  \abs{\partial_\theta\mathbf{w}} \leq \abs{\partial_\theta\u} +  \abs{\partial_\theta\v}
  \]
  so that 
  \begin{equation} \label{interp5}
   \int_{B_{1-R^{-1}}\setminus B_{1-\mu}} \abs{\nabla\tilde{\mathbf{w}}}^p
   \leq C \left(\lambda\int_{\partial B_1} (\abs{\nabla\u}^p +\abs{\nabla\v}^p)\d S+\lambda^{1 - p}\int_{\partial B_1} \abs{\u - \v}^p\d S \right) \!.
  \end{equation}
  Now, the $1$-dimensional Gagliardo Nirenberg inequality 
  \[
   \|\u - \v\|_{L^\infty(\partial B_1)} \leq C \|\nabla\u - \nabla\v\|_{L^p(\partial B_1)}^{1/p} \|\u - \v\|_{L^p(\partial B_1)}^{1 - 1/p},
  \]
  together with our assumption~\eqref{hp:interpolation2}, implies that $|\u - \v|\leq C\delta$
  for some constant~$C$ that only depends on~$p$. Therefore, we have
  \[
   |\tilde{\mathbf{w}}| \geq 1 - \abs{\frac{\u}{|\u|} - \v} \geq 1 - C\abs{\u - \v} \geq 1 - C\delta
  \]
  and, by taking~$\delta$ small enough, we can make sure that $|\tilde{\mathbf{w}}|\geq 1/2$. We can then define $\mathbf{w} := \tilde{\mathbf{w}}/|\tilde{\mathbf{w}}|$ on the annulus~$B_{1-R^{-1}}\setminus B_{1-\mu}$, and the lemma now follows thanks to~\eqref{interp4} and~\eqref{interp5}.  
\end{proof}

\begin{proof}[of Lemma~\ref{lemma:harmonic}]
 Let~$1/2 < \rho < 1$ be fixed. By Fatou lemma, we have
 \[
  \int_{\rho}^1\liminf_{R\to+\infty} G_R(\u_R; \, \partial B_t) \, \d t \leq 
  \liminf_{R\to+\infty} G_R(\u_R; \, B_1\setminus B_\rho) \leq C
 \]
 and hence, there exists~$\rho_0\in (\rho, \, 1)$ such that, by possibly taking a (non-relabelled) subsequence~$R\to+\infty$, there holds
 \begin{equation} \label{harmonic1}
  G_R(\u_R; \, \partial B_{\rho_0}) \leq \frac{C}{1-\rho} =: C_\rho, \qquad 
  \lim_{R\to +\infty}\int_{\partial B_{\rho_0}} \abs{\u_R - \u_*}^p \d S= 0.
 \end{equation}
 Let us define 
 \[
  \lambda_R := \begin{cases}
  \left(\int_{\partial B_{\rho_0}} \abs{\u_R - \u_*}^p\d S\right)^{1/p} &\textrm{if } 
  \int_{\partial B_{\rho_0}} \abs{\u_R - \u_*}^p\d S\neq 0 \\
  1/R & \textrm{otherwise}.
  \end{cases}
 \]
 Then, thanks to \eqref{harmonic1}, $\lambda_R$ is a positive sequence that satisfies
 \begin{equation} \label{harmonic2}
  \lambda_R\to 0, \quad \lambda_R^{1-p}\int_{\partial B_{\rho_0}} \abs{\u_R - \u_*}^p\d S\to 0
  \qquad \textrm{as } R\to +\infty.
 \end{equation}
  We aim to apply Lemma~\ref{lemma:interpolation} with~$\v = \u_*$. Thanks to the compact Sobolev embedding $W^{1,p}(\partial B_{\rho_0})\hookrightarrow C^0(\partial B_{\rho_0})$ and to~\eqref{harmonic1}, we have $\u_R\to \u_*$ uniformly on~$\partial B_{\rho_0}$ and~$|\u_*| = 1$ a.e. on~$\partial B_{\rho_0}$, so the assumption~\eqref{hp:interpolation1} is satisfied for~$R$ large enough. Moreover, \eqref{hp:interpolation2} is also satisfied for~$R$ large enough, due to~\eqref{harmonic1}. Therefore, we can apply the lemma. Letting $\mu_R := \lambda_R + R^{-1}$, we find a map~$\mathbf{w}_R\in W^{1,p}(B_{\rho_0}\setminus B_{\rho_0(1-\mu_R)}; \, S_0)$ such that 
 \begin{gather*}
  \mathbf{w}_R(\x) = \u_R(\x) \qquad \textrm{for a.e. } 
     \x\in \partial B_{\rho_0} \nonumber \\
  \mathbf{w}_R(\x) = \u_*\left(\frac{\x}{1-\mu_R}\right) \qquad 
     \textrm{for a.e. } \x\in \partial B_{\rho_0(1-\mu_R)}
 \end{gather*}
 and
 \begin{equation*}
  \begin{split}
  &G_R(\mathbf{w}_R; \, B_{\rho_0}\setminus B_{\rho_0(1-\mu_R)}) \leq C\bigg( 
  \mu_R  G_R(\u_R; \, \partial B_{\rho_0}) 
  + R^{-p^2/2+p-1} G_R^{p/2}(\u_R; \, \partial B_{\rho_0}) \\ &\qquad\qquad
  + \lambda_R \int_{\partial B_{\rho_0}} \abs{\nabla\u_*}^p \d S 
  + \lambda_R^{1-p} \int_{\partial B_{\rho_0}} \abs{\u_R - \u_*}^p\d S\bigg)
  \end{split}
 \end{equation*}
 so, thanks to~\eqref{harmonic1} and~\eqref{harmonic2},
 \begin{equation} \label{harmonic3}
  \lim_{R\to +\infty} G_R(\mathbf{w}_R; \, B_{\rho_0}\setminus B_{\rho_0(1-\mu_R)}) = 0.
 \end{equation}
 Now, let~$\u\in W^{1,p}(B_1; \, S_0)$ be a function such that $\u = \u_*$ a.e. on~$B_1\setminus B_\rho$. We define
 \[
  \v_R(\x) := \begin{cases}
   \u_R(\x)                            & \textrm{for } \x\in B_1\setminus B_{\rho_0} \\
   \mathbf{w}_R(\x)                         & \textrm{for } \x\in B_{\rho_0}\setminus B_{\rho_0(1 -\mu_R)} \\
   \u\left(\dfrac{\x}{1 - \mu_R}\right) & \textrm{for } B_{\rho_0(1 -\mu_R)}.
  \end{cases}
 \]
 The map~$\v_R$ belongs to~$W^{1,p}(B_1; \, S_0)$ and agrees with~$\u_R$ on~$\partial B_{\rho_0}$; moreover, due to~\eqref{harmonic3}, we have
 \begin{equation} \label{harmonic4}
  G_R(\v_R; \, B_{\rho_0}) = \frac{\alpha}{p} (1 - \mu_R)^{2-p} \int_{B_{\rho_0}} \abs{\nabla\u}^p + \mathrm{o}(1) \qquad\textrm{as } R\to+\infty.
 \end{equation}
 Thanks to the weak convergence $\u_R\rightharpoonup\u_*$ in~$W^{1,p}$, the minimality of~$\u_R$, and~\eqref{harmonic4}, we obtain that
 \[
 \frac{\alpha}{p}\int_{B_{\rho_0}} \abs{\nabla\u_*}^p \leq \liminf_{R\to+\infty} G_R(\u_R; \, B_{\rho_0}) \leq \liminf_{R\to+\infty} G_R(\v_R; \, B_{\rho_0}) = \frac{\alpha}{p}\int_{B_{\rho_0}} \abs{\nabla\u}^p ,
 \]
 so~${\u_*}_{|B_{\rho_0}}$ is $p$-minimizing harmonic. By taking~$\u = \u_*$, the same agument also shows that $\|\nabla\u_R\|_{L^p(B_{\rho_0})}\to\|\nabla\u_*\|_{L^p(B_{\rho_0})}$ as~$R\to+\infty$, whence we deduce the strong convergence ${\u_R}_{|B_{\rho_0}}\to{\u_*}_{|B_{\rho_0}}$ in~$W^{1,p}$.
\end{proof}

Finally, we give the proof of Proposition~\ref{prop:regularity-harmonic}. In case~$p=2$, the result is known by the work of H\'elein \cite{Helein1991a}. In case~$p>2$, the proposition follows by the results of Hardt and Lin~\cite[Corollary~2.6 and Theorem~3.1]{HardtLin}. In case~$1<p<2$, it suffices to prove the following lemma:

\begin{lemma} \label{lemma:MTM}
 Let~$B_1$ be the unit disk in~$\R^2$, let $k\geq 2$, $1<p<2$, and let $\mathbf{w}\colon B_1\to\S^{k}$ be a $p$-minimizing harmonic map that is homogeneous of degree~$0$, i.e. it satisfies $\mathbf{w}(\x) = \mathbf{w}(\x/|\x|)$ for a.e.~$\x\in B_1$. Then, $\mathbf{w}$ is constant.
\end{lemma}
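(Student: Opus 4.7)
The plan is to argue by contradiction: I would first show that any non-constant $0$-homogeneous $p$-minimizer $\mathbf{w}$ must be the cone over a non-trivial closed geodesic of $\S^k$, and then exhibit a compactly supported tangential perturbation along which the second variation of the $p$-energy is strictly negative, contradicting minimality.

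First I would write $\mathbf{w}(\x) = \mathbf{f}(\x/|\x|)$ with $\mathbf{f}\in W^{1,p}(S^1,\S^k)$; since $p>1$, the one-dimensional Sobolev embedding $W^{1,p}(S^1)\hookrightarrow C^0(S^1)$ ensures that $\mathbf{f}$ is continuous. In polar coordinates one has $|\nabla\mathbf{w}|^2 = |\mathbf{f}'(\theta)|^2/r^2$ and $\int_{B_1}|\nabla\mathbf{w}|^p\,dA = (2-p)^{-1}\int_{S^1}|\mathbf{f}'|^p\,d\theta$. Since $\mathbf{w}$ is $p$-harmonic on $B_1\setminus\{0\}$, separation of variables in the constrained Euler--Lagrange system yields the $p$-harmonic ODE on $S^1$:
\[
\partial_\theta\bigl(|\mathbf{f}'|^{p-2}\mathbf{f}'\bigr) + |\mathbf{f}'|^p\mathbf{f} = 0.
\]
Pairing with $\mathbf{f}'$ and using $\mathbf{f}\cdot\mathbf{f}'=0$ gives $(p-1)|\mathbf{f}'|^{p-2}(\mathbf{f}'\cdot\mathbf{f}'')=0$, so $|\mathbf{f}'|\equiv m$ is constant; the equation then reduces to $\mathbf{f}''+m^2\mathbf{f}=0$. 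Combined with $|\mathbf{f}|=1$ and $2\pi$-periodicity, this forces $m\in\mathbb{Z}$ and $\mathbf{f}(\theta) = \mathbf{a}\cos(m\theta)+\mathbf{b}\sin(m\theta)$ for some orthonormal pair $\mathbf{a},\mathbf{b}\in\S^k$. The case $m=0$ gives $\mathbf{w}$ constant, so I may assume $m\neq 0$ and, after a rotation of $\S^k$, that $\mathbf{f}(\theta)=(\cos(m\theta),\sin(m\theta),0,\ldots,0)$.

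To derive a contradiction I would exploit the extra dimension available thanks to $k\geq 2$. Since $\mathbf{w}(\x)\cdot\mathbf{e}_3=0$, the constant vector $\mathbf{e}_3$ is tangent to $\S^k$ along the image of $\mathbf{w}$, so for any scalar $\eta\in C^\infty_c(B_1)$ the map $\mathbf{v}_\epsilon := \cos(\epsilon\eta)\,\mathbf{w} + \sin(\epsilon\eta)\,\mathbf{e}_3$ is an admissible $\S^k$-valued variation that coincides with $\mathbf{w}$ near $\partial B_1$. Expanding to order $\epsilon^2$ and using the orthogonality identities $\mathbf{w}\cdot\partial_i\mathbf{w}=0$ and $\mathbf{e}_3\cdot\partial_i\mathbf{w}=0$ (which kill every odd-in-$\epsilon$ cross term), one obtains $|\nabla\mathbf{v}_\epsilon|^2 = |\nabla\mathbf{w}|^2 + \epsilon^2\bigl(|\nabla\eta|^2 - \eta^2|\nabla\mathbf{w}|^2\bigr) + O(\epsilon^4)$, and therefore
\[
\delta^2 E_p[\eta] = \frac{p}{2}\int_{B_1}\bigl[|\nabla\mathbf{w}|^{p-2}|\nabla\eta|^2 - |\nabla\mathbf{w}|^p\eta^2\bigr]dA = \frac{p\,m^{p-2}}{2}\int_{B_1}\bigl[r^{2-p}|\nabla\eta|^2 - m^2 r^{-p}\eta^2\bigr]dA.
\]

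To finish I would look for a radial $\eta\in C^\infty_c(0,1)$ making $\delta^2 E_p[\eta]<0$, which after the angular integration amounts to the reverse inequality $\int_0^1 r^{3-p}(\eta')^2\,dr < m^2\int_0^1 r^{1-p}\eta^2\,dr$. But the sharp constant in the one-dimensional weighted Hardy inequality $\int_0^1 r^{3-p}(\eta')^2\,dr \geq C\int_0^1 r^{1-p}\eta^2\,dr$ on $C^\infty_c(0,1)$ equals $C_\ast = (2-p)^2/4$, approached in the limit by the scale-invariant extremal $\eta_\ast(r)=r^{-(2-p)/2}$. Since $C_\ast < 1 \leq m^2$ for every $p\in(1,2)$ and every $m\in\mathbb{Z}\setminus\{0\}$, a sufficiently fine smooth truncation of $\eta_\ast$ supplies an admissible $\eta$ with $\delta^2 E_p[\eta]<0$, contradicting the $p$-minimality of $\mathbf{w}$. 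The main obstacle is to make this truncation quantitative: the extremal $\eta_\ast$ is not compactly supported and the weighted integral $\int_0^1 r^{1-p}\eta_\ast^2\,dr$ diverges logarithmically at the origin, so the positive gap $m^2 - (2-p)^2/4$ must be controlled by carefully estimating the boundary contributions arising from cutoffs in two thin annuli near $r=0$ and $r=1$.
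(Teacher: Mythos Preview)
Your approach is essentially the same as the paper's: reduce to a great-circle cone via the ODE argument, then compute the second variation along a radial tangential perturbation in the $\mathbf{e}_3$ direction and show it can be made negative. The paper, however, sidesteps your truncation obstacle entirely by noting that the test function need only vanish at $r=1$ (not at $r=0$) and taking the explicit choice $\psi(r)=1-r^\alpha$; the integral $\int_0^1\bigl(r^{3-p}(\psi')^2-j^2r^{1-p}\psi^2\bigr)\,dr$ is then computed in closed form and shown to be negative for $\alpha>0$ small, since $p^2-4p+2<0$ on $(1,2)$.
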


Once Lemma~\ref{lemma:MTM} is proven, Proposition~\ref{prop:regularity-harmonic} follows by Federer's ``dimension reduction'' argument (see e.g. \cite[Theorem~4.5]{HardtLin} or \cite[Theorem~IV]{SchoenUhlenbeck}).

\begin{proof}[of Lemma~\ref{lemma:MTM}]
 Since~$\mathbf{w}$ is homogeneous of degree~$0$, by working in polar coordinates we can identify~$\mathbf{w}$ with a periodic function of one scalar variable~$\theta\in(0, \, 2\pi)$. The $p$-harmonic map equation then writes
 \begin{equation} \label{MTM0}
 \frac{\d}{\d\theta} \left(\abs{\mathbf{w}^\prime}^{p-2}\mathbf{w}^\prime\right) = \abs{\mathbf{w}^\prime}^p\mathbf{w}
 \end{equation}
 where $\mathbf{w}^\prime := \frac{\d}{\d\theta}\mathbf{w}$. By taking the scalar product of both sides of the equation with~$\mathbf{w}^\prime$, and using the identity $\mathbf{w}\cdot\mathbf{w}^\prime = 0$ (which follows by differentiating $|\mathbf{w}|=1$), we obtain
 \[
  \begin{split}
 0 &= \frac{\d}{\d\theta} \left(\abs{\mathbf{w}^\prime}^{p-2}\mathbf{w}^\prime\right)\cdot\mathbf{w}^\prime = \frac{\d}{\d\theta}\abs{\mathbf{w}^\prime}^p - \abs{\mathbf{w}^\prime}^{p-2}\mathbf{w}^\prime \cdot\mathbf{w}^{\prime\prime} \\
 &= (p-1) \abs{\mathbf{w}^\prime}^{p-2}\mathbf{w}^\prime \cdot\mathbf{w}^{\prime\prime} = \frac{p-1}{p}\frac{\d}{\d\theta} \left(\abs{\mathbf{w}^\prime}^p\right).
 \end{split}
 \]
 Then, $|\mathbf{w}^\prime|$ is constant, and hence~\eqref{MTM0} reduces to the harmonic map equation:
 \[
  \mathbf{w}^{\prime\prime} = \abs{\mathbf{w}^\prime}^2\mathbf{w}.
 \]
 Therefore, $\mathbf{w}$ must parametrize a closed geodesic in~$\S^k$, that is, a great circle. Up to rotations, we might assume that
 \[
  \mathbf{w}(\theta) = \left(\cos(j\theta), \, \sin(j\theta), 0, \, \ldots, \, 0\right)
 \]
 for some integer~$j$. We fix a function~$\psi\colon [0, \, 1]\to\R$ with~$\psi(1) = 0$ and, for a small number~$t\in\R$, we consider the family of maps~$\mathbf{w}_t\colon B_1\to\S^k$ given in polar coordinates by
 \[
  \mathbf{w}_t(r, \, \theta) :=  \left(\cos(t\psi(r))\cos(j\theta), \, \cos(t\psi(r))\sin(j\theta), \, \sin(t\psi(r)), \, 0, \, \ldots, \, 0\right) \!.
 \]
 We have~$\mathbf{w}_0=\mathbf{w}$ and~$\mathbf{w}_t = \mathbf{w}$ on~$\partial B_1$ for any~$t$, therefore, the minimimality of $\mathbf{w}$ implies that
 \begin{equation} \label{MTM1}
  I := \frac{\d^2}{\d t^2}_{|t=0} \int_{B_1} \abs{\nabla\mathbf{w}_t}^p \geq 0.
 \end{equation}
 On the other hand, we can explicitely compute~$I$. Indeed, we have
 \begin{gather*}
  \abs{\nabla\mathbf{w}_t}^2 = t^2{\psi^\prime}^2 + \frac{j^2}{r^2}\cos^2(t\psi) =
  \frac{j^2}{r^2} + t^2\left({\psi^\prime}^2 - \frac{j^2}{r^2}\psi^2\right) + \mathrm{o}(t^2)\\
  \abs{\nabla\mathbf{w}_t}^p =
   \frac{j^p}{r^p} + \frac{p}{2}\frac{j^{p-2}}{r^{p-2}} t^2 \left({\psi^\prime}^2 - \frac{j^2}{r^2}\psi^2\right) + \mathrm{o}(t^2) \\
   I = 2\pi p j^{p-2} \int_0^1 \left(r^{3-p}{\psi^\prime}^2 - j^2r^{1-p}\psi^2\right) \d r
 \end{gather*}
 We first consider the case~$|j|=1$, and choose $\psi(r) := 1 - r^\alpha$ for some parameter~$\alpha>p/2-1$ to be specified later. We have $\psi^\prime(r) = -\alpha r^{\alpha - 1}$ and
 \[
  \begin{split}
   \frac{I}{2\pi p} &= \frac{\alpha^2-1}{2 - p + 2\alpha} + \frac{2}{2 - p + \alpha} - \frac{1}{2 - p} \\
   &= \frac{\alpha^2 (\alpha(2 - p) + p^2 - 4 p + 2)}{(2 - p + 2\alpha)(2 - p + \alpha)(2 - p)}
  \end{split}
 \]
 Since $p^2 - 4 p + 2<0$ for $1 < p <2$, by taking $\alpha = \alpha(p)>0$ small enough we can make sure that~$I<0$, which contradicts~\eqref{MTM1}. In case~$|j|>1$ we have, a fortiori, $I<0$ with the same choice of~$\psi$. Thus, we must have $j=0$ that is, $\mathbf{w}$ is constant.
\end{proof}

\begin{acknowledgements}
 A. M. would like to thank John Ball for suggesting this problem to her when she was a postdoctoral researcher at OxPDE.
 Part of this work was carried out when the authors were visiting
 the International Centre for Mathematical Sciences (ICMS) in Edinburgh (UK),
 supported by the Research-in-Groups program.
 The authors would like to thank the ICMS for its hospitality.
 G. C.'s  research  was supported by the Basque Government through the BERC 2018-2021 program
 and by the Spanish Ministry of Economy and Competitiveness: MTM2017-82184-R.
 A.M. is supported by an EPSRC Career Acceleration Fellowship EP/J001686/1 and EP/J001686/2 and an OCIAM
 Visiting Fellowship, the Keble Advanced Studies Centre.
 B.S.'s research was supported by the Project: Variational Advanced TEchniques for compleX MATErials (VATEXMATE) of University Federico II of Naples.
 B.S. would like to thank the OxPDE center whose hospitality in Michaelmas term 2015 and 2016 made it possible to interact with G.C. and A.M. and with the research group on Liquid Crystals.
\end{acknowledgements}


\newcommand{\noop}[1]{}

\end{document}